\newtheorem{thm}{Theorem}[section]
\newtheorem{lemma}[thm]{Lemma}
\newtheorem{prop}[thm]{Proposition}
\newtheorem{defn}[thm]{Definition}
\newtheorem{rem}[thm]{Remark}
\newcommand{\norm}[1]{\left\Vert #1\right\Vert}
\def \N {\mathbb N}
\def \C {\mathbb C}
\def \Z {\mathbb Z}
\def \R {\mathbb R}
\def \Q {\mathbb Q}
\def \E {\mathbb E}
\def \X {\mathcal{X}}
\def \Y {\mathcal{Y}}
\def \ep {\epsilon}
\numberwithin{equation}{section}
\begin{document}

\date{Oct. 19th, 2025}	
\title[]{Pointwise convergence of double ergodic averages along certain non-polynomial sequences}

	\author[]{Rongzhong Xiao}		
	\address[Rongzhong Xiao]{School of Mathematical Sciences, 
		University of Science and Technology of China, 
		Hefei, Anhui, 230026, PR China}
	\email{xiaorz@mail.ustc.edu.cn}
	
	\subjclass[2020]{Primary: 37A30; Secondary: 37A10.}
	\keywords{Double ergodic averages, Ergodic theorems, Measurable flows, Pointwise convergence}

\begin{abstract}
	Fix $c\in (1,23/22)$. Let $\alpha$ and $\beta$ be two distinct non-zero real numbers with $|\alpha|\neq |\beta|$. It is shown that  for any  measure preserving system $(X,\X,\mu,T)$ and any $f,g\in L^{\infty}(\mu)$, the limit
	\begin{equation*}
	\lim_{N\to\infty}\frac{1}{N}\sum_{n=1}^{N}f(T^{\lfloor \alpha n^c \rfloor}x)g(T^{\lfloor \beta n^c \rfloor}x)
	\end{equation*}
	exists for $\mu$-a.e. $x\in X$. Meanwhile, a multidimensional version of the above result is also presented.
\end{abstract}

\maketitle 
\section{Introduction}
\subsection{Background on the double ergodic averages}
By {\bf measure preserving system}, we mean a tuple $(X,\X,\mu,T)$, where $(X,\X,\mu)$ is a Lebesgue space and $T:X\to X$ is an invertible measure preserving transformation. We say that $(X,\X,\mu,T)$ is {\bf ergodic} if the sub-$\sigma$-algebra $\mathcal{I}(T)$ generated by all $T$-invariant subsets is trivival.

Fix a measure preserving system $(X,\X,\mu,T)$. In the study of ergodic theory, a fundamental problem is to understand the pointwise limit behavior of following double ergodic averages :
\begin{equation}\label{eq1-0}
	\frac{1}{N}\sum_{n=1}^{N}T^{\lfloor P(n)\rfloor}f\cdot T^{\lfloor Q(n)\rfloor}g,
\end{equation}
where $P,Q\in \R[n]$ with $\deg P=\deg Q$ and $f,g\in L^{\infty}(\mu)$.

In 1990, Bourgain established the following pointwise ergodic theorem.
\begin{thm}\label{thm1-1}
	$($\cite[Main Theorem and Equation (2.3)]{Bourgain90}$)$ Let $(X,\X,\mu,T)$ be a measure preserving system. Fix two distinct non-zero integers $a$ and $b$. Then for any $f,g\in L^{\infty}(\mu)$, we have that 
	\begin{equation}\label{eq1-1}
		\lim_{N\to\infty}\frac{1}{N}\sum_{n=1}^{N}f(T^{an}x)g(T^{bn}x)
	\end{equation}
	exists for $\mu$-a.e. $x\in X$. When $T$ is ergodic, if $f$ or $g$ is orthogonal to the closed subspace spanned by all eigenfunctions with respect to $T$, then the limit function is zero.
\end{thm}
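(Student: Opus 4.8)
The plan is to run the standard two-ingredient scheme for pointwise ergodic theorems, in its bilinear form. Write $A_N(f,g)(x)=\frac1N\sum_{n=1}^N f(T^{an}x)g(T^{bn}x)$. One wants (i) a bilinear maximal inequality, of the shape $\big\|\sup_N|A_N(f,g)|\big\|_{L^{1,\infty}(\mu)}\lesssim\|f\|_{L^2(\mu)}\|g\|_{L^\infty(\mu)}$ (the $L^\infty$ control of one input being used crucially, as there is no elementary $L^2\times L^2$ bound for a bilinear maximal operator), together with (ii) a.e.\ convergence of $A_N(f,g)$ for $(f,g)$ ranging over a class dense in $L^2(\mu)\times L^\infty(\mu)$; a Banach-principle style density argument then upgrades (i) and (ii) to a.e.\ convergence for all $f,g\in L^\infty(\mu)$.

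For the maximal inequality I would first invoke the Calder\'on transference principle to replace $A_N$ by the arithmetic model $B_N(f,g)(m)=\frac1N\sum_{n=1}^N f(m+an)g(m+bn)$ on $\Z$, where the Fourier transform on $\mathbb T$ is available. Writing $B_N$ as the bilinear multiplier operator with symbol $m_N(\xi,\eta)=\frac1N\sum_{n=1}^N e^{2\pi i n(a\xi+b\eta)}$, one observes that $m_N$ concentrates as $N\to\infty$ on the exceptional lines $\{a\xi+b\eta\in\Z\}$ and decays like $(N\,\|a\xi+b\eta\|)^{-1}$ off them. The estimate is then obtained by splitting the frequency plane into a neighbourhood of these lines and its complement: away from the lines the decay of $m_N$ plus Plancherel close the bound, while near the lines one replaces $m_N$ by a smooth model multiplier and controls its oscillation in $N$ — across the dyadic scales $N\sim 2^k$ by a Rademacher--Menshov square-function argument, and within each dyadic block by a short-range variant — to obtain the oscillation/maximal control. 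This near-line piece is the heart of the matter and the step I expect to be the main obstacle: there the symbol couples the three frequencies $\xi$, $\eta$, $\xi+\eta$, so the operator behaves like a family of paraproducts and requires a genuinely bilinear, time--frequency-flavoured analysis rather than one clean application of Plancherel; it is exactly here that the hypotheses on $a,b$ enter, keeping the lines $\{a\xi+b\eta=0\}$ in general position relative to the output frequency $\xi+\eta$ and ruling out degenerate resonances, and carrying this through uniformly in $N$ while summing over scales is the delicate work.

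For the dense class I would use the spectral theorem. If $f,g$ are eigenfunctions, $f\circ T=\lambda f$, $g\circ T=\gamma g$ with $|\lambda|=|\gamma|=1$, then
\[
A_N(f,g)(x)=f(x)g(x)\cdot\frac1N\sum_{n=1}^N(\lambda^a\gamma^b)^n,
\]
which converges everywhere (to $f(x)g(x)$ if $\lambda^a\gamma^b=1$, to $0$ otherwise). Finite linear combinations of products of eigenfunctions are dense in $L^2(\mathcal K)$, $\mathcal K$ the Kronecker factor (the closed span of the eigenfunctions), so with the maximal inequality we get a.e.\ convergence of $A_N(f,g)$ whenever $f,g\in L^\infty(\mu)\cap L^2(\mathcal K)$. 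For general $f,g$, decompose $f=f_1+f_2$, $g=g_1+g_2$ with $f_1,g_1\in L^2(\mathcal K)$ and $f_2,g_2\perp\mathcal K$ and expand $A_N(f,g)$ bilinearly; the $(f_1,g_1)$ term is covered, and each remaining term has a factor orthogonal to $\mathcal K$. Since $\mathcal K$ is a characteristic factor for these averages — a classical $L^2$ fact — $\|A_N(h,g)\|_{L^2}\to0$ and $\|A_N(f,h)\|_{L^2}\to0$ whenever $h\perp\mathcal K$, which the maximal inequality upgrades to a.e.\ convergence to $0$. This also gives the final assertion: if $f$ or $g$ is orthogonal to the span of the eigenfunctions, every surviving term tends to $0$ in $L^2$ and is bounded, so the a.e.\ limit is $0$.
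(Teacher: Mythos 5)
First, a point of context: the paper does not prove Theorem \ref{thm1-1} at all --- it is quoted verbatim from Bourgain's 1990 paper --- so there is no internal proof to compare against; your proposal has to be judged as an attempted proof of Bourgain's theorem itself, and there it has a genuine gap at exactly the point where all the difficulty of that theorem lives. Your scheme is ``maximal inequality $+$ a.e.\ convergence on a dense class,'' but the dense class you exhibit (finite combinations of products of eigenfunctions) is dense only in $L^2(\mathcal K)\times L^2(\mathcal K)$. For the complementary pieces you write ``$\|A_N(h,g)\|_{L^2}\to 0$ whenever $h\perp\mathcal K$ \dots which the maximal inequality upgrades to a.e.\ convergence to $0$.'' That inference is false: a maximal inequality upgrades a.e.\ convergence \emph{on a dense subclass} to a.e.\ convergence on the closure (the Banach principle); it does not convert norm convergence of the full sequence into a.e.\ convergence (norm convergence only yields a.e.\ convergence along a subsequence). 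Since there is no elementary dense subclass of $\mathcal K^{\perp}$ on which a.e.\ convergence is easy, this step is precisely the content of Bourgain's theorem. Bourgain's actual argument attacks it head-on: after reducing to $f$ with continuous spectral measure, he controls the averages by maximal exponential sums $\sup_N\max_{\theta\in E}\bigl|\frac1N\sum_{n\le N}e(n\theta)\,g(T^{bn}x)\bigr|$ over finite frequency sets $E$, with an $L^2$ bound growing only logarithmically in $|E|$ (his entropy method), and plays this off against the non-atomicity of the spectral measure. Nothing in your outline substitutes for that.

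A secondary misdirection: the bilinear maximal inequality in the form you state, $\bigl\|\sup_N|A_N(f,g)|\bigr\|_{L^{1,\infty}}\lesssim\|f\|_{L^2}\|g\|_{L^\infty}$, is trivial --- bound $|g|$ pointwise by $\|g\|_{L^\infty}$ and apply the classical maximal ergodic theorem to $T^a$. No transference, bilinear multiplier analysis, or time--frequency decomposition is needed for it, and conversely that machinery (which is at the level of Lacey's bilinear maximal function theorem, proved long after 1990) is only required when neither input is in $L^\infty$. So the part of your write-up you identify as ``the heart of the matter'' is in fact the easy part, while the part you dispatch in one sentence is where the theorem actually is.
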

Recently, Krause extended Bourgain's theorem to general linear polynomials with real coefficients. 
\begin{thm}\label{thm1-2}
	$($\cite[Theorem 1.1]{Krause2025}$)$ Let $(X,\X,\mu,T)$ be a measure preserving system. Let $\alpha\in \R\backslash \Q$ and $\beta\in \Q$. Then for any $f,g\in L^{\infty}(\mu)$, we have that 
	$$\lim_{N\to\infty}\frac{1}{N}\sum_{n=1}^{N}f(T^{\lfloor \alpha n\rfloor}x)g(T^{\lfloor \beta n\rfloor}x)$$ exists for $\mu$-a.e. $x\in X$.
\end{thm}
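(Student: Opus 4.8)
The plan is to adapt Bourgain's Fourier-analytic proof of Theorem~\ref{thm1-1}, the two new features to absorb being the floor of an \emph{irrational} multiple in the first coordinate and the \emph{rational} rather than integer coefficient in the second. If $\beta=0$ the average factors as $g(x)\cdot\frac1N\sum_{n\le N}f(T^{\lfloor\alpha n\rfloor}x)$ and the single-variable average converges a.e.\ (classical; also a special case of the suspension-flow reduction below), so assume $\beta=p/q\neq0$ with $p\in\Z\setminus\{0\}$, $q\in\N$. Splitting $n=qm+r$ for $0\le r<q$ and using the exact identity $\lfloor\beta(qm+r)\rfloor=pm+\lfloor pr/q\rfloor$, along each residue class the second coordinate becomes a fixed translate of $g$ evaluated at $T^{pm}x$, while $\lfloor\alpha n\rfloor=\lfloor(\alpha q)m+\alpha r\rfloor$; since the full average is a convex combination over $r$ of such sums, it suffices to prove a.e.\ convergence of $\frac1M\sum_{m\le M}f(T^{\lfloor am+\gamma\rfloor}x)\,h(T^{pm}x)$ for every $a\in\R\setminus\Q$, $\gamma\in\R$, $p\in\Z\setminus\{0\}$ and $f,h\in L^\infty(\mu)$. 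To obtain a.e.\ convergence I would establish a uniform oscillation (or $\lambda$-jump) inequality for these bilinear averages; by the Calder\'on transference principle this reduces to the analogous inequality for the shift on $\Z$.

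\textbf{Kronecker part versus uniform part.} Decompose by the spectral theory of $T$. On the part where $f$ or $h$ is built from eigenfunctions $T\phi_\lambda=e(\lambda)\phi_\lambda$, write $\lambda\lfloor am+\gamma\rfloor=\lambda(am+\gamma)-\lambda\{am+\gamma\}$; since $\{am\}$ equidistributes mod $1$ (as $a\notin\Q$), the Ces\`aro limit can be computed explicitly, the only surviving resonances being a thin set of eigenvalue pairs, reflecting the irrationality of $a$. On the complementary, weakly mixing (``uniform'') part one must show the bilinear average converges a.e.\ to $0$, quantitatively enough to feed the oscillation inequality — this is where the real work lies.

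\textbf{The uniform part.} On $\Z$ the relevant object is the bilinear Fourier multiplier $m_M(\xi,\eta)=\frac1M\sum_{m\le M}e(\lfloor am+\gamma\rfloor\xi+pm\eta)$. I would peel the sawtooth phase using $e(\lfloor am+\gamma\rfloor\xi)=e((am+\gamma)\xi)\,e(-\{am+\gamma\}\xi)$ together with the Fourier series of the fractional part, so that $m_M$ splits into a Fej\'er-type main term $\approx\frac1M\sum_{m\le M}e(m(a\xi+p\eta))$ concentrated near $\{a\xi+p\eta\in\Z\}$, plus error terms whose sums in $m$ are controlled away from that set by van der Corput--Weyl estimates using the continued-fraction expansion of $a$ (equivalently the three-distance structure of $(\{am\})_{m\ge1}$). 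Near $\{a\xi+p\eta\in\Z\}$ the contribution matches the Kronecker term above; the complementary ``minor-arc'' piece is shown to be negligible in bilinear operator norm, combined with a Rademacher--Menshov square function along $M=2^k$. An equivalent, and in places cleaner, route replaces the Fourier peeling by the suspension flow $R_t(x,s)=(T^{\lfloor t+s\rfloor}x,\{t+s\})$ on $X\times[0,1)$: prove convergence of $\frac1M\sum_{m\le M}f(T^{\lfloor am+\gamma+s\rfloor}x)h(T^{pm}x)$ for a.e.\ $(x,s)$ and then pass to $s=0$, using that for fixed $s\in(0,1)$ the summands at parameters $s$ and $0$ differ only for the $m$ with $\{am+\gamma\}\ge1-s$, a set of upper density $s$ by equidistribution; hence in the limit $M\to\infty$ the two families of averages differ by at most $2\|f\|_\infty\|h\|_\infty\,s$, which forces the $s=0$ averages to be Cauchy as $s\downarrow0$.

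\textbf{Main obstacle.} The heart of the matter is the genuinely bilinear minor-arc estimate: a pointwise decay bound for $m_M$ does not suffice, and one needs the full bilinear machinery — van der Corput / $TT^*$ arguments reducing the bilinear average to bilinear averages of correlation functions of $f$ and $h$, closed off via weak mixing — which is already the technical core of Theorem~\ref{thm1-1}. Layered on top is the non-polynomial phase $\{am\}\xi$, which blocks any direct appeal to polynomial exponential-sum technology, so the required cancellation must be extracted from the Diophantine (Beatty-sequence) structure of $\lfloor am\rfloor$, uniformly in the frequency variables. The hypothesis $\beta\in\Q$ enters precisely here: it keeps the second coordinate honestly linear ($pm$), confining the irrational floor to one coordinate so that Bourgain's integer-coefficient analysis governs the other.
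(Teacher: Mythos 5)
The paper does not actually prove this statement: Theorem \ref{thm1-2} is imported as a black box from Krause \cite[Theorem 1.1]{Krause2025}, so there is no internal proof to measure your argument against. Judged on its own terms, your proposal is a strategy outline rather than a proof, and the gap sits exactly at the point you yourself flag as ``where the real work lies.'' The preparatory reductions are fine: splitting $n=qm+r$ and using $\lfloor \beta(qm+r)\rfloor=pm+\lfloor pr/q\rfloor$ correctly turns the second coordinate into an honest integer orbit $T^{pm}$, and the suspension-flow comparison between the averages at parameter $s$ and at $s=0$ (the upper-density-$s$ discrepancy argument) is sound --- it is essentially the same device the paper uses in Lemma \ref{lem2-2}. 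But neither reduction touches the core difficulty.

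The missing step is the quantitative bilinear ``minor-arc'' estimate for $\frac1M\sum_{m\le M}e(\lfloor am\rfloor\xi+pm\eta)$ acting as a bilinear multiplier, uniformly in $(\xi,\eta)$ away from the resonant set, together with a bound strong enough to sum over the Rademacher--Menshov scales $M=2^k$. You describe a plausible direction (peel the sawtooth via the Fourier expansion of $\{\cdot\}$, control the error by van der Corput using the continued fraction of $a$), but no estimate is stated, let alone proved; in particular it is not explained how to get cancellation \emph{uniformly in $\xi$} from the phase $\{am\}\xi$, which is precisely what distinguishes this problem from Bourgain's integer-coefficient case and cannot be waved at by analogy. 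A second, smaller gap: on the Kronecker part you need a.e.\ convergence of the weighted single averages $\frac1M\sum_m e(\lambda\lfloor am+\gamma\rfloor)h(T^{pm}x)$ for every eigenvalue $\lambda$ simultaneously on a single full-measure set, i.e.\ a Wiener--Wintner-type uniform statement for these non-polynomial weights; you assert the limit ``can be computed explicitly'' but that computes the value, not the almost-everywhere convergence. As it stands the proposal reduces the theorem to two unproved estimates of essentially the same depth as the theorem itself.
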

Following Theorem \ref{thm1-1} and \ref{thm1-2}, the simplest case of \eqref{eq1-0} is the following double ergodic averages:
\begin{equation}\label{eq1-7}
		\frac{1}{N}\sum_{n=1}^{N}T^{n^2}f\cdot T^{-n^2}g,
\end{equation} 
where $f,g\in L^{\infty}(\mu)$.

Unfortunately, \eqref{eq1-7} seems to be out of reach at this stage due to the absence of distinct degrees. However, in this direction, Daskalakis established a pointwise ergodic theorem for the $\lfloor n^c\rfloor$-analogue of \eqref{eq1-7}.
\begin{thm}\label{thm2-1}
	$(${\em a special case of} \cite[Theorem 1.7]{Daskalakis2025}$)$ Let $c\in (1,23/22)$ and $\alpha\in \R\backslash \{0\}$. Let $(X,\X,\mu,T)$ be a measure preserving system. Then for any $f,g\in L^{\infty}(\mu)$, we have that 
	$$\lim_{N\to\infty}\frac{1}{N}\sum_{n=1}^{N}f(T^{\lfloor \alpha n^c\rfloor}x)g(T^{-\lfloor  \alpha n^c\rfloor}x)=\lim_{N\to\infty}\frac{1}{N}\sum_{n=1}^{N}f(T^{n}x)g(T^{-n}x)$$ for $\mu$-a.e. $x\in X$.
\end{thm}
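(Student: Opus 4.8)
Since the stated theorem is the special case $P(n)=\alpha n^{c}$, $Q(n)=-\alpha n^{c}$ of \cite[Theorem 1.7]{Daskalakis2025}, there is in principle nothing to prove; I will instead outline the mechanism, as essentially the same estimates will drive the proof of the main results below. After replacing $\alpha$ by $|\alpha|$ and, when $\alpha<0$, interchanging the roles of $f$ and $g$ and invoking Theorem \ref{thm1-1} to match the two resulting limits, we may assume $\alpha>0$. Grouping the terms of the average according to the value of $\lfloor\alpha n^{c}\rfloor$ gives
\[
\frac1N\sum_{n=1}^{N}f(T^{\lfloor\alpha n^{c}\rfloor}x)\,g(T^{-\lfloor\alpha n^{c}\rfloor}x)=\frac1N\sum_{m\le M_{N}}w_{m}\,f(T^{m}x)\,g(T^{-m}x),\qquad M_{N}:=\lfloor\alpha N^{c}\rfloor,
\]
where $w_{m}=\#\{n\le N:\lfloor\alpha n^{c}\rfloor=m\}\ge0$ and $\sum_{m}w_{m}=N$. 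I would then write $w_{m}=v(m)+\epsilon_{m}$, where $v(m)=c^{-1}\alpha^{-1/c}m^{1/c-1}$ is the smooth density of the sequence $\{\lfloor\alpha n^{c}\rfloor\}$ and $\epsilon_{m}$ is the fluctuation, so that the average splits into a \emph{smooth part} and an \emph{error part}, treated separately.

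For the smooth part, Abel summation rewrites $\frac1N\sum_{m\le M_{N}}v(m)\,f(T^{m}x)g(T^{-m}x)$ as a nonnegatively weighted average $\frac1N\sum_{m\le M_{N}}\widetilde v_{m}\,A_{m}(f,g)(x)$ of the linear double averages $A_{m}(f,g)(x):=\frac1m\sum_{k=1}^{m}f(T^{k}x)g(T^{-k}x)$, where $\widetilde v_{m}\ge0$, $\sum_{m}\widetilde v_{m}=N\,(1+o(1))$, and the weights do not concentrate near the small-$m$ end. Since $A_{m}(f,g)$ converges $\mu$-a.e.\ to the right-hand side limit of the theorem (Theorem \ref{thm1-1})---in particular $\sup_{m}|A_{m}(f,g)(x)|<\infty$ a.e., being a convergent sequence---the elementary regularity of such weighted Ces\`aro means gives that the smooth part converges a.e.\ to the same limit.

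All the difficulty is thus concentrated in the error part $E_{N}(f,g)(x)=\frac1N\sum_{m\le M_{N}}\epsilon_{m}\,f(T^{m}x)g(T^{-m}x)$, which I would show tends to $0$ a.e. Here I would apply Calder\'on transference to pass to the integer model, where---crucially, since the two shifts are equal and opposite---$E_{N}$ acts as a bilinear operator with the \emph{single-frequency} multiplier $\theta\mapsto\frac1N\sum_{m\le M_{N}}\epsilon_{m}e(\theta m)$; expressing $\epsilon_{m}$ through the sawtooth $\{(m/\alpha)^{1/c}\}-\tfrac12$ and approximating the sawtooth by a short trigonometric polynomial \`a la Vaaler reduces the problem to classical Weyl sums of the shape $\sum_{m\le M}e\big(\beta\,m^{1/c}+\theta m\big)$, to which van der Corput's method applies. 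One needs these bounds not merely with a single power saving, but in the square-function and oscillation forms that feed the maximal inequality and the Banach-principle argument (the pointwise convergence to $0$ being first verified on a Fourier-dense class of functions); such estimates are available exactly when $c\in(1,23/22)$, and supplying them is the main obstacle---the ergodic-theoretic content of the theorem being carried entirely by Bourgain's Theorem \ref{thm1-1}.
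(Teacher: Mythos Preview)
You are right that Theorem~\ref{thm2-1} is quoted from \cite{Daskalakis2025} and needs no separate proof here; the paper simply cites it. Your split into a smooth main term (handled by Abel summation and Bourgain's Theorem~\ref{thm1-1}) and an oscillatory error expressed through the sawtooth, then truncated to a trigonometric polynomial and reduced to Weyl-type sums $\sum_{m}e(\beta m^{1/c}+\theta m)$, is the correct architecture and matches Daskalakis.

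The mechanism you sketch for the error term, however, differs from his. You invoke the special $\pm1$ structure to view $E_N$ as a \emph{single-frequency} bilinear multiplier and then propose oscillation/square-function bounds feeding a Banach-principle argument on a Fourier-dense class. Daskalakis (and this paper, in Sections~\ref{section3}--\ref{section4}) does neither: one dualizes, inserts an extra average by measure-preservation, and runs three iterations of the van der Corput/Cauchy--Schwarz inequality (Lemma~\ref{lem4-1} here) to strip all dynamical data, leaving only $\sum_{n}\Delta_{h_1,h_2,h_3}K_{N,j}(n)$ for the kernel; the resulting power saving $\|E_N\|_{L^{1}}\lesssim N^{-\delta}$ then gives a.e.\ convergence along the lacunary grid $\lfloor\lambda^{m}\rfloor$ by Borel--Cantelli, with the gaps closed by taking $\lambda\in\{2^{1/n}:n\ge1\}$ arbitrarily close to $1$ --- no maximal inequality or dense-class argument is used. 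This distinction matters for your stated purpose (``the same estimates will drive the proof of the main results below''): Theorems~\ref{TB}--\ref{TC} involve shifts $(p,q)$ or $(\lfloor\gamma n\rfloor,n)$ with $p\ne-q$, where your single-frequency reduction is unavailable, so it is the Gowers--Cauchy--Schwarz route, not the multiplier shortcut, that you will actually need.
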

In fact, the function $\alpha t^c$ can be replaced by a general $c$-regularly varying function (see \cite[Theorem 1.7]{Daskalakis2025}) in Daskalakis's result. Furthermore, Daskalakis' result shows that the double ergodic averages along $(\lfloor n^c\rfloor,-\lfloor n^c\rfloor)$ share similarities with thoses in \eqref{eq1-1}. 
\subsection{Main results}
First, we establish a result that combines aspects of Theorem \ref{thm1-2} and \ref{thm2-1}.
\begin{thm}\label{TB}
Fix $c\in (1,23/22)$. Let $\alpha$ and $\beta$ be two non-zero real numbers with $|\alpha|\neq |\beta|$. Let $(X,\X,\mu,T)$ be a measure preserving system. Then for any $f,g\in L^{\infty}(\mu)$, the limit
\begin{equation}\label{eq1-3}
	\lim_{N\to\infty}\frac{1}{N}\sum_{n=1}^{N}f(T^{\lfloor \alpha n^c \rfloor}x)g(T^{\lfloor \beta n^c \rfloor}x)
\end{equation}
exists for $\mu$-a.e. $x\in X$. 

Let $\mathcal{Z}_{2}(T)$ be the $2$-order  factor\footnote{For its definition, see \cite[Section 1.1 of Chapter 9]{HK-book}.} of $(X,\X,\mu,T)$. Furthermore, if $\E_{\mu}(f|\mathcal{Z}_{2}(T))=0$ or $\E_{\mu}(g|\mathcal{Z}_{2}(T))=0$, then the limit function is $0$.
\end{thm}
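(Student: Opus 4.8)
The plan is to reduce the pointwise convergence of the average in \eqref{eq1-3} to a situation where the exponents are honest integers and then exploit the structure supplied by Bourgain's theorem (Theorem~\ref{thm1-1}) together with the seminorm control furnished by the factor $\mathcal{Z}_2(T)$. First I would pass to the suspension/flow: given the measure preserving system $(X,\X,\mu,T)$, build the measurable $\R$-flow $(Y,\Y,\nu,(S_t)_{t\in\R})$ whose time-one map is (conjugate to) $T$, so that $T^{\lfloor\alpha n^c\rfloor}$ and $T^{\lfloor\beta n^c\rfloor}$ acting on functions lifted from $X$ become $S_{\lfloor\alpha n^c\rfloor}$ and $S_{\lfloor\beta n^c\rfloor}$. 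The point of working in the flow is that $\lfloor\alpha n^c\rfloor$ and $\alpha n^c$ differ by a bounded amount, and along the flow one can absorb the fractional parts: the key analytic input, exactly as in Daskalakis's argument behind Theorem~\ref{thm2-1}, is that for $c\in(1,23/22)$ the averaging operators $\frac1N\sum_{n\le N} S_{\alpha n^c}\otimes S_{\beta n^c}$ and the ``straightened'' operators $\frac1N\sum_{n\le N} S_{\alpha n}\otimes S_{\beta n}$ (or an appropriate comparison average after a change of variables $m\approx n^c$) are asymptotically close in a suitable $L^2$ or maximal sense; this uses exponential sum / oscillation estimates (van der Corput, the circle method input that forces the range $c<23/22$) and a transference from $\Z$ to $\R$.

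Once the non-polynomial exponents are replaced by the linear ones, I would invoke Theorem~\ref{thm1-1} (in its flow formulation, which follows from the $\Z$-version by the standard Calderón transference/suspension correspondence together with Krause's real-coefficient refinement Theorem~\ref{thm1-2} if needed to handle $|\alpha|\neq|\beta|$ with possibly irrational ratio) to conclude that $\frac1N\sum_{n\le N} S_{\alpha n}F\cdot S_{\beta n}G$ converges a.e. Combining this with the comparison step from the previous paragraph yields a.e. convergence of \eqref{eq1-3}. The only subtlety here is that $|\alpha|\neq|\beta|$ is precisely the non-degeneracy hypothesis ensuring the two exponents are ``genuinely independent'' linear forms, so Bourgain's zero-entropy/eigenfunction analysis applies; the excluded case $|\alpha|=|\beta|$ (i.e.\ $\beta=\pm\alpha$) is exactly the degenerate one handled separately by Theorem~\ref{thm2-1}.

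For the second assertion — that $\E_\mu(f|\mathcal{Z}_2(T))=0$ or $\E_\mu(g|\mathcal{Z}_2(T))=0$ forces the limit to vanish — I would argue that the limiting behavior is controlled by the Host--Kra seminorm $\nnorm{\cdot}_{2}$, or rather by a seminorm no larger than $\nnorm{\cdot}_{3}$ after accounting for the van der Corput square-unfolding. Concretely, the bound should be: the $L^2(\mu)$-norm of the (limit of the) average is $\lesssim \min(\nnorm{f}_{s},\nnorm{g}_{s})$ for some finite $s$; one runs a van der Corput / PET-type reduction on the $\R$-flow as in the proof of the mean convergence of $\frac1N\sum S_{\alpha n}f\cdot S_{\beta n}g$ (which, by the Bergelson--Host--Kra--Leibman theory, is controlled by $\nnorm{\cdot}_{2}$ since both terms are linear and of distinct ``speeds''). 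Since functions with $\E_\mu(\cdot|\mathcal{Z}_2(T))=0$ have vanishing $\nnorm{\cdot}_{2}$ seminorm, and the flow does not change these seminorms (the $\mathcal{Z}_2$ of the flow restricts to the $\mathcal{Z}_2$ of the time-one map), the limit function is $0$ in $L^2$, hence $\mu$-a.e.\ along a subsequence; combined with the already-established a.e.\ convergence of the full sequence, the a.e.\ limit is $0$.

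I expect the main obstacle to be the first step: rigorously showing that replacing $\lfloor\alpha n^c\rfloor,\lfloor\beta n^c\rfloor$ by linear exponents is legitimate in the \emph{pointwise} (not merely mean) sense. This requires a maximal inequality for the difference of the two families of averaging operators along the flow, which is where the exponential-sum estimates valid only for $c<23/22$ enter and where the bulk of the technical work (oscillation seminorm bounds, a jointly-in-$(\alpha,\beta)$ van der Corput argument, and transference from the integers to the real flow) will lie; the rest of the proof is then a relatively clean assembly of Theorem~\ref{thm1-1}/\ref{thm1-2} and standard Host--Kra seminorm estimates.
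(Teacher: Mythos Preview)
Your overall strategy matches the paper's: lift to a flow on $X\times[0,1)$, approximate the $n^c$-averages by linear ones, invoke Bourgain/Krause for the linear limit, and then read off the $\mathcal{Z}_2$-characteristic factor. You have also correctly identified the comparison step as the crux. However, there is one concrete obstacle you have not anticipated, and it is precisely the new content of the paper.

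When $\gamma=\alpha/\beta$ is \emph{rational}, your plan works essentially verbatim: after the flow construction the averages take the form $\frac1N\sum_n \tilde f(R^{pn}y)\tilde g(R^{qn}y)$ along the sparse set $\{\lfloor H_k n^c\rfloor\}$, and Daskalakis's machinery (his Proposition~2.8 and Lemma~3.3) reduces the error term to an exponential-sum estimate exactly as in Theorem~\ref{thm2-1}. But when $\gamma$ is \emph{irrational}, the linearized average is $\frac1N\sum_n \tilde f(S_k^{\lfloor\gamma n\rfloor}y)\tilde g(S_k^n y)$, and Daskalakis's Lemma~3.3 --- which is stated for \emph{integer} coefficients $p,q$ --- has no direct analogue. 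Simply saying ``exactly as in Daskalakis's argument'' does not go through here. The paper's fix is a Diophantine-approximation trick: for each scale $N$ choose $(P_N,Q_N)$ with $|\gamma-P_N/Q_N|<1/N$, pass to the (now $N$-dependent) transformation $S_{k,N}=S_k^{1/Q_N}$, and run a three-fold van der Corput argument directly inside the integral $\int_Y$ (rather than on $\Z$) to bound the weighted error $W_{k,N}$. The point is that the troublesome $P_N,Q_N$ disappear after using $\nu$-invariance and Cauchy--Schwarz, leaving the same triple-difference sum of the kernel $K_{N,j}$ that Daskalakis bounds. This is not a maximal inequality for the difference of the two operator families, as you suggest, but an $L^1$-bound at each lacunary scale, which suffices after restricting to $N\in\{\lfloor\lambda^m\rfloor\}$.

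On the characteristic factor, the paper does something more explicit than your seminorm sketch: it shows that the limit function equals an integral of Bourgain-type linear limits (Propositions~\ref{prop3-1} and~\ref{prop4-1}), namely $\int_0^1 \lim_N \frac1N\sum_n g(T^n x)f(T^{\lfloor\gamma(n+t)\rfloor}x)\,dt$ in the irrational case, and then invokes the known $\mathcal{Z}_2$-control for those linear averages (via \cite{BMR2024} and \cite{HK-book}). Your direct $\nnorm{\cdot}_2$ bound on the nonlinear average is plausible but would require a separate argument; the paper sidesteps it by computing the limit first.
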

\begin{rem}
	\begin{enumerate}
		\item[(1)] The proof of Theorem \ref{TB} shows that the limit function depends only on the ratio $\alpha/\beta$.
		\item[(2)] For more information on the limit function, see Proposition \ref{prop3-1} and \ref{prop4-1}.
	\end{enumerate}
\end{rem}

Next, we introduce an application of Theorem \ref{TB} on pointwise ergodic theorem, which can be viewed as its a multidimensional version.
%
\begin{thm}\label{TC}
	Let $1<p,q<\infty$ with $1/p+1/q\le 1$, $c\in (1,23/22)$, ${\bf b}$ and ${\bf d}$ be two distinct $\R$-linearly dependent $m$-dimensional non-zero real vectors, and $T_1,\ldots,T_m$ be a family of commuting invertible measure preserving transformations acting on the Lebesgue space $(X,\X,\mu)$. Then for any $f\in L^{p}(\mu),g\in L^{q}(\mu)$, the limit
	$$
	\lim_{N\to\infty}\frac{1}{N}\sum_{n=1}^{N}f(T_{1}^{\lfloor b_1 n^{c} \rfloor}\cdots T_{m}^{\lfloor b_m n^{c} \rfloor}x)g(T_{1}^{\lfloor d_1 n^{c} \rfloor}\cdots T_{m}^{\lfloor d_m n^{c} \rfloor}x)
	$$
	exists for $\mu$-a.e. $x\in X$. 
\end{thm}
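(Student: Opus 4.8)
The plan is to deduce Theorem~\ref{TC} from the one-dimensional Theorems~\ref{TB} and~\ref{thm2-1}. Write $A_{N}(f,g)(x)$ for the average displayed in the statement and set $e(t):=e^{2\pi it}$. Since $\mathbf b$ and $\mathbf d$ are non-zero, $\R$-linearly dependent and distinct, there is a unique $\lambda\in\R\setminus\{0,1\}$ with $\mathbf d=\lambda\mathbf b$. By the Banach principle (a maximal inequality together with convergence on a dense class) it suffices to prove (i) the bilinear maximal inequality $\big\|\sup_{N}|A_{N}(f,g)|\big\|_{L^{r}(\mu)}\lesssim\|f\|_{L^{p}(\mu)}\|g\|_{L^{q}(\mu)}$, where $1/r=1/p+1/q$, and (ii) $\mu$-a.e.\ convergence of $A_{N}(f,g)$ for $f,g\in L^{\infty}(\mu)$. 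For (i) one transfers, by Calder\'on's transference principle, to the bilinear averages of the shift action of $\Z^{m}$ and invokes the exponential-sum and oscillation bounds for $\lfloor n^{c}\rfloor$, $c<23/22$, that underlie Theorem~\ref{TB}; in the ``rational'' case below, (i) in fact follows from the maximal inequality already contained in the proof of Theorem~\ref{TB}. So we concentrate on (ii), with $f,g\in L^{\infty}(\mu)$ fixed.

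First suppose that the line $\R\mathbf b$ meets $\Z^{m}\setminus\{0\}$; choose a primitive $\mathbf v\in\Z^{m}$ on it and write $\mathbf b=\rho\mathbf v$, $\mathbf d=\sigma\mathbf v$ with $\rho,\sigma\in\R\setminus\{0\}$, $\sigma=\lambda\rho$, so $\rho\neq\sigma$. For each $i$ and $t\ge0$, $\lfloor\rho v_{i}t\rfloor=v_{i}\lfloor\rho t\rfloor+\lfloor v_{i}\{\rho t\}\rfloor$, where $\lfloor v_{i}\{\rho t\}\rfloor$ takes only finitely many values. Consequently, with $S:=\prod_{i=1}^{m}T_{i}^{v_{i}}$ a single invertible measure preserving transformation, there are finitely many transformations $V_{0},\dots,V_{K}$ (products of powers of the $T_{i}$, hence commuting with $S$) and step functions $\kappa,\kappa'\colon[0,1)\to\{0,\dots,K\}$, each constant on finitely many subintervals, with
\[
\prod_{i}T_{i}^{\lfloor b_{i}n^{c}\rfloor}=S^{\lfloor\rho n^{c}\rfloor}V_{\kappa(\{\rho n^{c}\})},\qquad
\prod_{i}T_{i}^{\lfloor d_{i}n^{c}\rfloor}=S^{\lfloor\sigma n^{c}\rfloor}V_{\kappa'(\{\sigma n^{c}\})}.
\]
Expanding these step functions, $A_{N}(f,g)(x)$ becomes a finite linear combination of weighted bilinear averages
\[
\frac1N\sum_{n=1}^{N}\mathbbm{1}_{I}(\{\rho n^{c}\})\,\mathbbm{1}_{J}(\{\sigma n^{c}\})\,(f\circ V_{j})\big(S^{\lfloor\rho n^{c}\rfloor}x\big)\,(g\circ V_{j'})\big(S^{\lfloor\sigma n^{c}\rfloor}x\big),
\]
over subintervals $I,J\subseteq[0,1)$ and indices $j,j'$, using that $S$ and the $V_{j}$ commute.

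Next I would approximate $\mathbbm{1}_{I}$ and $\mathbbm{1}_{J}$ by trigonometric polynomials, absorbing the errors via the (weighted form of the) maximal inequality from (i); this reduces matters to finitely many averages
\[
\frac1N\sum_{n=1}^{N}e\big((k\rho+\ell\sigma)n^{c}\big)\,(f\circ V_{j})\big(S^{\lfloor\rho n^{c}\rfloor}x\big)\,(g\circ V_{j'})\big(S^{\lfloor\sigma n^{c}\rfloor}x\big),\qquad k,\ell\in\Z.
\]
If $k\rho+\ell\sigma=0$ the character is trivial and this is an unweighted bilinear average along $(\lfloor\rho n^{c}\rfloor,\lfloor\sigma n^{c}\rfloor)$, which converges $\mu$-a.e.: by Theorem~\ref{TB} if $|\rho|\neq|\sigma|$, and by Theorem~\ref{thm2-1} if $\sigma=-\rho$ (the case $\rho=\sigma$ cannot occur, since $\mathbf b\neq\mathbf d$). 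The hard part will be the terms with $\gamma:=k\rho+\ell\sigma\neq0$, for which one must show $\mu$-a.e.\ decay to $0$: this is a Wiener--Wintner-type assertion for the bilinear average along $(\lfloor\rho n^{c}\rfloor,\lfloor\sigma n^{c}\rfloor)$ twisted by $e(\gamma n^{c})$, and it is not a formal consequence of Theorems~\ref{TB} and~\ref{thm2-1}. I would establish it by re-running the proof of Theorem~\ref{TB} with this extra character carried through the exponential sums: $\gamma n^{c}$ is again $c$-regularly varying with $c<23/22$, so the same circle-method input that yields convergence in Theorem~\ref{TB} should annihilate these non-zero frequencies.

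Finally, when $\R\mathbf b$ contains no non-zero lattice point the reduction to a single transformation $S$ is unavailable; I would then argue directly on the $\Z^{m}$-shift model, adapting the proofs of Theorems~\ref{TB} and~\ref{thm2-1} to the ray $\R_{\ge0}\mathbf b$. This, together with the oscillatory estimate above, is where the constraint $c<23/22$ genuinely re-enters and where the substance of the argument lies; the remaining ingredients---commutativity to reassemble the twists $V_{j}$, the finite interval splittings, and the Banach-principle passage from $L^{\infty}\times L^{\infty}$ to $L^{p}\times L^{q}$---are routine.
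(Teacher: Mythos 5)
Your reduction strategy diverges from the paper's and, as you yourself flag, leaves the two hardest steps unproven; these are genuine gaps rather than routine omissions. First, your rational-direction case generates weighted bilinear averages with weights $1_{I}(\{\rho n^{c}\})1_{J}(\{\sigma n^{c}\})$, and after Fourier expansion you need $\mu$-a.e.\ decay of the bilinear averages along $(\lfloor\rho n^{c}\rfloor,\lfloor\sigma n^{c}\rfloor)$ twisted by $e(\gamma n^{c})$ with $\gamma\neq0$. This Wiener--Wintner-type statement is contained in neither Theorem \ref{TB} nor Theorem \ref{thm2-1}, and ``re-running the proof with the character carried through'' is a substantial new argument: the proof of Theorem \ref{TB} proceeds through a chain of flow approximations, a Diophantine decomposition, and the estimates of Section \ref{section4}, and is not set up to absorb an oscillating weight. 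Second, your irrational-direction case ($\R\mathbf{b}\cap\Z^{m}=\{0\}$) is deferred entirely to ``adapting the proofs to the $\Z^{m}$-shift model,'' which is essentially the whole difficulty of the theorem restated, not a proof.

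Both gaps are avoided in the paper by a single device you do not use: the suspension flow $S^{(t_{1},\ldots,t_{m})}(x,\mathbf{z})=(T_{1}^{\lfloor t_{1}+z_{1}\rfloor}\cdots T_{m}^{\lfloor t_{m}+z_{m}\rfloor}x,\ (t_{1}+z_{1})\bmod 1,\ldots,(t_{m}+z_{m})\bmod 1)$ on $Y=X\times[0,1)^{m}$. After replacing $f,g$ by $\tilde f,\tilde g$ on $Y$ and approximating $S^{n^{c}\mathbf{b}}$, $S^{\lambda n^{c}\mathbf{b}}$ by $(S^{\mathbf{b}/2^{l}})^{\lfloor 2^{l}n^{c}\rfloor}$, $(S^{\mathbf{b}/2^{l}})^{\lfloor 2^{l}\lambda n^{c}\rfloor}$ (with error $O(1/l)$ by equidistribution of $b_{i}n^{c}$ modulo one), the problem becomes literally the one-dimensional Theorem \ref{TB} for the single transformation $S^{\mathbf{b}/2^{l}}$ with exponents $\alpha=2^{l}$, $\beta=2^{l}\lambda$: no lattice point on $\R\mathbf{b}$ is needed, no indicator weights appear (the fractional-part bookkeeping is absorbed into the $[0,1)^{m}$ coordinate), and no twisted estimate is required. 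Your step (i) also asks for more than necessary: since $1/p+1/q\le1$, the paper gets by with the linear maximal inequality of Lemma \ref{lem5-1} (itself proved by the same suspension trick plus a known $L^{p}$ bound for one-parameter $\lfloor n^{c}\rfloor$-averages) together with H\"older and a standard approximation, rather than a bilinear maximal inequality by transference. One point in your favour: when $\lambda=-1$ the hypothesis $|\alpha|\neq|\beta|$ of Theorem \ref{TB} fails and Theorem \ref{thm2-1} must be invoked instead; you single this case out correctly, whereas the paper's final appeal to Theorem \ref{TB} alone silently needs the same supplement.
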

\subsection{Brief overview of the proof of Theorem \ref{TB}}

We apply Daskalakis's method (see \cite[Subsection 1.1]{Daskalakis2025} for an overview) to prove that the limit in \eqref{eq1-3} exists almost everywhere. Since Daskalakis's method only can deal with the following form:
\begin{equation}\label{eq1-4}
	\frac{1}{N}\sum_{n=1}^{N}f(T^{\lfloor\alpha\lfloor h(n) \rfloor\rfloor}x)g(T^{\lfloor\beta\lfloor h(n) \rfloor\rfloor}x),
\end{equation}
where $h(x)$ is a $c$-regularly varying function, we have to use the ergodic averages shaped like thoses in \eqref{eq1-4} to approximate ones in \eqref{eq1-3} under arbitrarily small errors. To do this, we adapt the method from \cite[Proof of Theorem A.1]{HSX2023} to construct a special  measurable flow $(X\times [0,1),\X\otimes \mathcal{B}([0,1)),\mu\times \text{Leb}_{[0,1)},(S^t)_{t\in \R})$ and replace $f$ and $g$ by $\tilde{f}:=f\otimes 1_{[0,1)}$ and $\tilde{g}:=g\otimes 1_{[0,1)}$. Based on these ingredients, we prove that the averages of form
$$\frac{1}{N}\sum_{n=1}^{N}\tilde{f}((S^{\beta\epsilon})^{\lfloor (\alpha/\beta)\lfloor \ep^{-1}n^c\rfloor\rfloor}y)\tilde{g}((S^{\beta\epsilon})^{\lfloor \ep^{-1}n^c\rfloor}y)$$ are exactly what we need, where $\ep$ is sufficiently small. 

When $\alpha/\beta$ is rational, this case can be solved by applying Daskalakis's method except some slight adjustments. However, when $\alpha/\beta$ is irrational, the absence of a suitable analogue of \cite[Lemma 3.3]{Daskalakis2025} prevents a direct application of Daskalakis's method. To overcome the barrier, based on a Diophantus approximation result, we convert the question into the establishment of suitable upper bound of the integration of the following sum of the form
$$\sum_{m=1}^{N}\sum_{n=1}^{N}w_{N}(n)l((S^{\beta\epsilon/Q_N})^{m}y)\tilde{f}((S^{\beta\epsilon/Q_N})^{ m+nP_N}y)\tilde{g}((S^{\beta\epsilon/Q_N})^{m+nQ_N}y),$$
where $P_N,Q_N$ are two integers depending on $N$ and $\alpha/\beta$, $w_{N}(n)$ is a weight function depending on $N$ and $\ep^{-1}n^c$, and $l$ is a $1$-bounded measurable function depending on $w_{N}$. After this, the remainder of the proof for the irrational case follows Daskalakis's approach.

Subsequently, by re-examining the approximation process, we show that the limit of form
$$\lim_{N\to\infty}\frac{1}{N}\sum_{n=1}^{N}f(T^{an}x)g(T^{\lfloor bn+d \rfloor}x)$$
 is enough to describe the limit function, where $a\in \Z,b,d\in \R$. This will help us determine the assoiated characteristic factor of the ergodic averages in \eqref{eq1-3}. 
 
 
 \noindent\textbf{Structure of the paper.} In Section \ref{sec3} - \ref{section4}, we prove Theorem \ref{TB}. In Section \ref{sec5}, we show Theorem \ref{TC}. In Appendix \ref{appendixC}, a variant of Krause's result will be discussed.
 
 \noindent\textbf{Notations.} In the subsequent part of the paper, we need to use the following notations:
 \begin{itemize}
 	\item[(1)] $A\lesssim B(A\lesssim_{c_1,\ldots,c_d}B)$ denotes there is a constant $C$ (depending on $c_1,\ldots,c_d$) such that $A\le CB$. $A=O(B)$ denotes that there is a constant $M$ such that $|A|\le M|B|$. $o_{N\to\infty}(1)$ denotes a quantity which goes to zero as $N$ tends to infinity.
 	\item[(2)] Fix $x\in \R$. $\lfloor x\rfloor$ is the largest integer such that $0\le x-\lfloor x\rfloor<1$ and $\{x\}$ denotes the value $x-\lfloor x\rfloor$.
 	\item[(3)] Fix $k\in \N$ with $k>10^9$. For each $s\in (2/k,1-2/k)$, let $$I_{k}(s)=[0,1/k]\cup [1-s-1/k,1-s+1/k]\cup [1-1/k,1).$$
 \end{itemize}
  
\noindent\textbf{Acknowledgements.}
The author is supported by National Natural Science Foundation of China (123B2007, 12371196) and National Key R$\&$D Program of China (2024YFA1013\\601, 2024YFA1013600). The author thanks Prof. Danqing He for the private communications that prompted me to consider the irrational case of Theorem \ref{thm2-1}. Since part of this work was completed during the author's visit to Jilin University, the author's thanks also go to Prof. Shirou Wang for her hospitality during this period.

\section{Proof of Theorem \ref{TB}: Constructing proper approximations}\label{sec3}
In the next three sections, we prove Theorem \ref{TB}. To begin with, let us introduce the notion of measurable flows and two equidistributed results.
\begin{defn}
	Let $d\in \N$. A tuple $(X,\X,\mu, (T^{{\bf t}})_{{\bf t}\in \R^{d}})$ is called a {\bf measurable flow} if $(X,\X,\mu )$ is a Lebesgue probability space, $(T^{\bf t})_{{\bf t}\in \R^{d}}$ is a $d$-parameter group of invertible measure preserving transformations acting on $X$, and the mapping $$\R^d\times X\rightarrow X, \ ({\bf t}, x)\mapsto T^{{\bf t}}x$$ is measurable.
\end{defn}
\begin{lemma}\label{lem2-3}
	$($\em{Cf.} \cite[Theorem 1.3]{Boshernitzan1994}$)$ Let $c\in (1,2)$. Then for any non-zero real number $\alpha$, the sequence $\alpha n^c$ is equidistributed under modulo one. 
\end{lemma}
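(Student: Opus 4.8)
The plan is to deduce the lemma from Weyl's equidistribution criterion together with the classical second-derivative (van der Corput) estimate for exponential sums; note that it is in any event a very special instance of Boshernitzan's theorem on uniform distribution of Hardy-field functions, so one may equally well just quote \cite{Boshernitzan1994}. By Weyl's criterion, $(\alpha n^{c})_{n\ge 1}$ is equidistributed modulo one if and only if $\frac{1}{N}\sum_{n=1}^{N}e(h\alpha n^{c})\to 0$ as $N\to\infty$ for every $h\in\Z\setminus\{0\}$, where $e(t)=e^{2\pi i t}$. Thus it suffices to fix $h\neq 0$ and prove $\sum_{n=1}^{N}e(h\alpha n^{c})=o(N)$.

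Set $f(x)=h\alpha x^{c}$, so that $f''(x)=h\alpha c(c-1)x^{c-2}$. Since $c\in(1,2)$, this never vanishes on $(0,\infty)$, and since $c-2\in(-1,0)$ it is monotone there. Split $\{1,\dots,N\}$ into the $O(\log N)$ dyadic blocks $\{n:2^{j}\le n<2^{j+1}\}$, and on such a block write $M=2^{j}$. Then $|f''(x)|$ stays within an absolute constant factor ($2^{2-c}\le 2$) of $\lambda:=|h\alpha|\,c(c-1)M^{c-2}$, so the van der Corput second-derivative test gives
\begin{equation*}
\Bigl|\sum_{2^{j}\le n<2^{j+1}}e(h\alpha n^{c})\Bigr|\;\lesssim_{h,\alpha,c}\;M\lambda^{1/2}+\lambda^{-1/2}\;\lesssim_{h,\alpha,c}\;M^{c/2}+M^{1-c/2}.
\end{equation*}
Since $c\in(1,2)$ one has $\theta:=\max\{c/2,\,1-c/2\}=c/2<1$, and summing over the blocks (the last, possibly incomplete, one contributing likewise with $M\asymp N$) yields
\begin{equation*}
\Bigl|\sum_{n=1}^{N}e(h\alpha n^{c})\Bigr|\;\lesssim_{h,\alpha,c}\;\sum_{2^{j}\le N}(2^{j})^{\theta}\;\lesssim_{\theta}\;N^{\theta}=o(N),
\end{equation*}
which is exactly what Weyl's criterion demands.

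The argument is routine; the only substantive ingredient is the van der Corput estimate, whose hypotheses — a non-vanishing, monotone second derivative of controlled size — are verified directly above. I therefore do not anticipate any real obstacle: the assumption $1<c<2$ is precisely what forces $f''$ to be both nonzero and of subpolynomial size $M^{c-2}$ throughout each dyadic block, which is all the second-derivative test needs, so no preliminary van der Corput $A$-process is required.
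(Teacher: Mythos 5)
Your proof is correct, but it takes a different route from the paper, which offers no argument at all for this lemma: it simply cites Boshernitzan's general equidistribution theorem for Hardy-field / $c$-regularly varying sequences, of which $\alpha n^{c}$ is a very special case (as you yourself note). Your substitute — Weyl's criterion reducing the claim to $\sum_{n\le N}e(h\alpha n^{c})=o(N)$ for each fixed $h\neq 0$, a dyadic decomposition of $[1,N]$, and the van der Corput second-derivative test applied on each block $[M,2M)$ where $|f''|\asymp |h\alpha|c(c-1)M^{c-2}$ with max/min ratio at most $2^{2-c}\le 2$ — is standard, self-contained, and strictly more elementary; it also yields the quantitative power saving $O_{h,\alpha,c}(N^{c/2})$ rather than mere $o(N)$. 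Your exponent bookkeeping is right: $M\lambda^{1/2}+\lambda^{-1/2}\lesssim_{h,\alpha,c}M^{c/2}+M^{1-c/2}$ and $\max\{c/2,\,1-c/2\}=c/2<1$ precisely because $c\in(1,2)$, which is also what keeps $f''$ nonvanishing and monotone so the test applies without any preliminary $A$-process. What the citation buys that your argument does not is generality — Boshernitzan's theorem covers the full class of regularly varying functions relevant to Daskalakis's framework — but for the specific sequence $\alpha n^{c}$ used in this paper your proof is a complete and valid replacement.
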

\begin{lemma}\label{weyl}
	$($\em{Cf.} \cite[Theorem 1.4]{EW11}$)$ For any irrational number $\alpha$,  the sequence $\alpha n$ is equidistributed under modulo one. 
\end{lemma}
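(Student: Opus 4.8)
The plan is to establish the equidistribution of $(\alpha n)_{n\in\N}$ modulo one through Weyl's criterion, after which the statement reduces to a one-line exponential-sum estimate. Recall that Weyl's criterion asserts that a real sequence $(x_n)_{n\in\N}$ is equidistributed modulo one if and only if, for every non-zero integer $h$,
\[
\lim_{N\to\infty}\frac{1}{N}\sum_{n=1}^{N}e^{2\pi i h x_n}=0 .
\]
I would either invoke this directly from the cited reference or recall its short proof: necessity is clear since $t\mapsto e^{2\pi i ht}$ has vanishing mean over $[0,1)$, while sufficiency follows by sandwiching the indicator function of an arbitrary subinterval of $[0,1)$ between trigonometric polynomials (for instance via Fejér kernels) and passing to the limit.

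I would then apply the criterion with $x_n=\alpha n$. Fix a non-zero integer $h$; since $\alpha$ is irrational we have $h\alpha\notin\Z$, so $z:=e^{2\pi i h\alpha}\neq 1$ and the geometric sum is bounded uniformly in $N$:
\[
\left|\sum_{n=1}^{N}e^{2\pi i h\alpha n}\right|=\left|\frac{z\,(z^{N}-1)}{z-1}\right|\le\frac{2}{|z-1|}.
\]
Dividing by $N$ and letting $N\to\infty$ gives $\frac{1}{N}\sum_{n=1}^{N}e^{2\pi i h\alpha n}\to 0$; since $h$ was an arbitrary non-zero integer, Weyl's criterion yields that $(\alpha n)_{n\in\N}$ is equidistributed modulo one. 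Alternatively, one could deduce the same conclusion from the unique ergodicity of the irrational circle rotation $x\mapsto x+\alpha$ on $\R/\Z$, applied to the orbit of $0$, but the Fourier-analytic route above is the most self-contained.

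There is no serious obstacle here: this is the classical Weyl equidistribution theorem. The only point that genuinely uses the hypothesis is that irrationality of $\alpha$ guarantees $e^{2\pi i h\alpha}\neq 1$ for every non-zero integer $h$, keeping $|z-1|$ bounded away from $0$ so that the partial sums stay bounded; for rational $\alpha$ this fails for an appropriate $h$ and the statement itself is false. The ``hard part'', if any, is merely recalling and (if desired) reproving Weyl's criterion, which we are content to take from the literature.
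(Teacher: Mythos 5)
Your proof is correct: it is the classical Weyl argument (Weyl's criterion plus the geometric-series bound for $e^{2\pi i h\alpha n}$ with $h\alpha\notin\Z$), which is exactly the content of the result the paper cites from the literature without reproving. The paper offers no proof of its own here, so there is nothing to compare beyond noting that your argument is the standard one and is complete.
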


The whole proof of Theorem \ref{TB} is divided into three parts. In the rest of this section, we construct suitable approximations of the averages in \eqref{eq1-3} to apply Daskalakis' method. In Section \ref{section3}, we deal with the case, where $\alpha/\beta$ is rational. In Section \ref{section4}, we deal with the  irrational case. 

\subsection{Constructing suitable approximations of the averages in \eqref{eq1-3}}
Let $\gamma=\alpha/\beta$. Then $|\gamma|\neq 1$. Fix a measure preserving system $(X,\X,\mu,T)$ and $1$-bounded $f,g\in L^{\infty}(\mu)$. Let $Y=X\times [0,1)$, $\nu=\mu\times \text{Leb}_{[0,1)}$, and $\Y=\X\otimes \mathcal{B}([0,1))$. For any $t\in \R$, we define $$S^{t}:Y\to Y,(x,s)\mapsto (T^{\lfloor t+s\rfloor}x,t+s\mod{1}).$$ This defines a measurable flow $(Y,\Y,\nu,(S^t)_{t\in \R})$. For any $(x,s)\in Y$, let $\tilde{f}(x,s)=f(x)$ and $\tilde{g}(x,s)=g(x)$.

Next, we state several lemmas. 
\begin{lemma}\label{lem2-2}
	If for $\nu$-a.e. $(x,s)\in Y$, the limit
	\begin{equation}\label{eq2-12}
		\lim_{N\to\infty}\frac{1}{N}\sum_{n=1}^{N}\tilde{f}(S^{\alpha n^c}(x,s))\tilde{g}(S^{\beta n^c}(x,s))
	\end{equation}
	exists, then the limit 
	\begin{equation}\label{eq2-13}
		\lim_{N\to\infty}\frac{1}{N}\sum_{n=1}^{N}f(T^{\lfloor \alpha n^c \rfloor}x)g(T^{\lfloor \beta n^c \rfloor}x)
	\end{equation}
	exists for $\mu$-a.e. $x\in X$.
\end{lemma}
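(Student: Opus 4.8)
The plan is to unwind the definition of the flow, reduce to a statement about Lebesgue-a.e.\ $s$ by Fubini, and then remove the shift $s$ via an equidistribution comparison, letting $s\to 0$ along a countable good set. First I would record that, by definition of $(S^t)$, one has $S^{t}(x,s)=(T^{\lfloor t+s\rfloor}x,\{t+s\})$, so
\[
\frac1N\sum_{n=1}^{N}\tilde f\bigl(S^{\alpha n^c}(x,s)\bigr)\tilde g\bigl(S^{\beta n^c}(x,s)\bigr)
= A_N^{(s)}(x):=\frac1N\sum_{n=1}^{N}f\bigl(T^{\lfloor \alpha n^c+s\rfloor}x\bigr)\,g\bigl(T^{\lfloor \beta n^c+s\rfloor}x\bigr),
\]
and in particular $A_N^{(0)}(x)$ is exactly the average in \eqref{eq2-13}. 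The hypothesis says that the set $E=\{(x,s)\in Y:\ \lim_N A_N^{(s)}(x)\ \text{exists}\}$ is $\nu$-conull; it is measurable, e.g.\ via the Cauchy criterion applied to the jointly measurable functions $(x,s)\mapsto A_N^{(s)}(x)$. By Fubini there is a Lebesgue-conull set $G\subseteq[0,1)$ such that for every $s\in G$ the limit $\lim_N A_N^{(s)}(x)$ exists for $\mu$-a.e.\ $x$, so it remains to transfer convergence from $s\in G$ to $s=0$.

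The key step is a comparison between $A_N^{(s)}$ and $A_N^{(0)}$ that is uniform in $x$. For $s\in[0,1)$ and any real $u$ one has $\lfloor u+s\rfloor=\lfloor u\rfloor+\mathbf 1_{\{u\}\ge 1-s}$; since $f$ and $g$ are $1$-bounded, the $n$-th summand of $A_N^{(s)}(x)-A_N^{(0)}(x)$ has modulus at most $2$ and vanishes unless $\{\alpha n^c\}\ge 1-s$ or $\{\beta n^c\}\ge 1-s$. Hence
\[
\bigl|A_N^{(s)}(x)-A_N^{(0)}(x)\bigr|\le \frac2N\,\#\bigl\{1\le n\le N:\ \{\alpha n^c\}\ge 1-s\ \text{or}\ \{\beta n^c\}\ge 1-s\bigr\},
\]
whose right-hand side does not involve $x$. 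By Lemma \ref{lem2-3} (applicable since $c\in(1,23/22)\subseteq(1,2)$ and $\alpha,\beta\neq 0$), both $(\alpha n^c)$ and $(\beta n^c)$ are equidistributed modulo $1$, so $\frac1N\#\{n\le N:\{\alpha n^c\}\ge 1-s\}\to s$ and likewise for $\beta$; by the union bound the right-hand side above has $\limsup_N\le 4s$.

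Finally I would fix a sequence $s_k\in G$ with $s_k\downarrow 0$ and intersect the corresponding $\mu$-conull sets to obtain a conull set $X_0$ on which $\lim_N A_N^{(s_k)}(x)$ exists for every $k$. For $x\in X_0$, since $A_N^{(s_k)}(x)$ converges, the comparison gives $\limsup_N A_N^{(0)}(x)-\liminf_N A_N^{(0)}(x)\le 2\limsup_N\bigl|A_N^{(0)}(x)-A_N^{(s_k)}(x)\bigr|\le 8s_k$ for every $k$, and letting $k\to\infty$ forces $\lim_N A_N^{(0)}(x)$ to exist, which is precisely \eqref{eq2-13}. The argument is otherwise soft; the only point that needs care is that the discrepancy between the shifted and unshifted averages is dominated by a \emph{deterministic} (independent of $x$) count whose density tends to $0$ with $s$ by Lemma \ref{lem2-3}, which is exactly what permits passing to the limit $s\to 0$ along a countable good set without destroying the almost-everywhere conclusion.
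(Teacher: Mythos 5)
Your proposal is correct and follows essentially the same route as the paper's proof: select good fibers $s_k\in(0,1/k)$ by Fubini, bound $|A_N^{(s_k)}(x)-A_N^{(0)}(x)|$ by the deterministic count of $n\le N$ with $\{\alpha n^c\}$ or $\{\beta n^c\}$ in $[1-s_k,1)$, control that count by the equidistribution of $\alpha n^c$ and $\beta n^c$ mod $1$ (Lemma \ref{lem2-3}), and conclude via the Cauchy criterion on a countable intersection of conull sets. Your write-up is just slightly more explicit about the floor-function identity and the measurability of the good set.
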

\begin{proof}
	By the assumption, we have that for any $k\in \N$, there are a $\mu$-full measure subset $X_k$ of $X$ and $s_k\in (0,1/k)$ such that for any $x\in X_k$, the limit 
	\begin{equation}\label{eq2-14}
		\lim_{N\to\infty}\frac{1}{N}\sum_{n=1}^{N}\tilde{f}(S^{\alpha n^c}(x,s_k))\tilde{g}(S^{\beta n^c}(x,s_k))
	\end{equation}
	exists.
	
	Fix $\displaystyle x\in \left(\bigcap_{k=1}^{\infty}X_k\right)\cap \{y\in X:\max(|f(T^n y)|,|g(T^n y)|)\le 1\ \text{for any}\ n\in \Z\}$. Then for any $k\in \N$, we have that
	\begin{align}
		& \limsup_{N,M\to \infty}\left|\frac{1}{N}\sum_{n=1}^{N}f(T^{\lfloor \alpha n^c \rfloor}x)g(T^{\lfloor \beta n^c \rfloor}x)-\frac{1}{M}\sum_{n=1}^{M}f(T^{\lfloor \alpha n^c \rfloor}x)g(T^{\lfloor \beta n^c \rfloor}x)\right|\notag
		\\ \lesssim &
		\limsup_{N\to \infty}\left|\frac{1}{N}\sum_{n=1}^{N}f(T^{\lfloor \alpha n^c \rfloor}x)g(T^{\lfloor \beta n^c \rfloor}x)-\frac{1}{N}\sum_{n=1}^{N}\tilde{f}(S^{\alpha n^c}(x,s_k))\tilde{g}(S^{\beta n^c}(x,s_k))\right|\ (\text{by \eqref{eq2-14}})\notag
		\\ \lesssim &
		\limsup_{N\to \infty}\frac{|\{1\le n\le N:\{\alpha n^c\}\in [1-s_k,1)\ \text{or}\ \{\beta n^c\}\in [1-s_k,1)\}|}{N}\notag
		\\ \lesssim &
		1/k.\ (\text{by Lemma \ref{lem2-3}})\notag
	\end{align}
	
	Therefore, for such $x$, the limit 
	$$\lim_{N\to\infty}\frac{1}{N}\sum_{n=1}^{N}f(T^{\lfloor \alpha n^c \rfloor}x)g(T^{\lfloor \beta n^c \rfloor}x)$$ exists. This finishes the proof.
\end{proof}
\begin{rem}
		The calculations in the above proof shows that for $\mu$-a.e. $x\in X$ and each $k\in \N$,
		\begin{equation}\label{eq2-15}
		\limsup_{N\to \infty}\left|\frac{1}{N}\sum_{n=1}^{N}f(T^{\lfloor \alpha n^c \rfloor}x)g(T^{\lfloor \beta n^c \rfloor}x)-\frac{1}{N}\sum_{n=1}^{N}\tilde{f}(S^{\alpha n^c}(x,s_k))\tilde{g}(S^{\beta n^c}(x,s_k))\right|\lesssim 1/k.
		\end{equation}
\end{rem}
\begin{lemma}\label{lem2-5}
	Assume that $\gamma$ is irrational. Fix $k\in \N$ with $k>10^9$. Let $L_k>0$ be such that $\alpha/L_k,\beta/L_k\in \R\backslash \Q$ and  
	\begin{equation}\label{eq2-16}
		(|\gamma|+3)|\beta|/L_k<\frac{1}{k}.
	\end{equation} Then we have that 
	\begin{equation}\label{eq2-17}
	\begin{split}
	& 	\int_{Y}\limsup_{N\to\infty}\Big|\frac{1}{N}\sum_{n=1}^{N}\tilde{f}(S^{\alpha n^c}(x,s))\tilde{g}(S^{\beta n^c}(x,s))
	\\ & \hspace{2cm} -
	\frac{1}{N}\sum_{n=1}^{N}\tilde{f}(S^{\frac{\beta}{L_k}\lfloor \gamma \lfloor L_kn^c\rfloor\rfloor}(x,s))\tilde{g}(S^{\frac{\beta}{L_k}\lfloor L_kn^c\rfloor}(x,s))\Big|d\nu(x,s)\lesssim \frac{1}{k}.
	\end{split}
	\end{equation}
\end{lemma}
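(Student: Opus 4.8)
The plan is to show that, for \emph{every} $(x,s)\in Y$, the function under the integral sign in \eqref{eq2-17} is bounded by a constant multiple of $1/k$; since $\nu(Y)=1$, the asserted estimate then follows by integrating this pointwise bound. Throughout set $a_n=\alpha n^c$, $b_n=\beta n^c$, $a_n'=(\beta/L_k)\lfloor\gamma\lfloor L_kn^c\rfloor\rfloor$ and $b_n'=(\beta/L_k)\lfloor L_kn^c\rfloor$, so that the two averages in \eqref{eq2-17} read $\frac{1}{N}\sum_{n=1}^{N}\tilde{f}(S^{a_n}(x,s))\tilde{g}(S^{b_n}(x,s))$ and $\frac{1}{N}\sum_{n=1}^{N}\tilde{f}(S^{a_n'}(x,s))\tilde{g}(S^{b_n'}(x,s))$. \textbf{Step 1 (floor estimates).} First I would record, for all $n$, the elementary bounds $|b_n-b_n'|=(|\beta|/L_k)\{L_kn^c\}\le|\beta|/L_k$ and, after using $\gamma\beta=\alpha$ to rewrite $a_n'=(\alpha/L_k)\lfloor L_kn^c\rfloor-(\beta/L_k)\{\gamma\lfloor L_kn^c\rfloor\}$,
\[
|a_n-a_n'|=\Big|(\alpha/L_k)\{L_kn^c\}+(\beta/L_k)\{\gamma\lfloor L_kn^c\rfloor\}\Big|\le (|\gamma|+1)|\beta|/L_k .
\]
By \eqref{eq2-16} both quantities are $<1/k<1/2$.

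\textbf{Step 2 (reduction to a statement about $s$).} Since $\tilde{f}(S^{t}(x,s))=f(T^{\lfloor t+s\rfloor}x)$ and $f,g$ are $1$-bounded, for any reals $t,t'$ with $|t-t'|<1$ one has $|\tilde{f}(S^{t}(x,s))-\tilde{f}(S^{t'}(x,s))|\le 2$, and this vanishes unless $\lfloor t+s\rfloor\neq\lfloor t'+s\rfloor$, which in turn forces $\{t+s\}$ to lie within $|t-t'|$ of $0$ or of $1$; the same holds for $\tilde{g}$. Combining this with the triangle inequality and the bound $|uv-u'v'|\le|u-u'|+|v-v'|$ (valid for $1$-bounded scalars), Step 1 yields, for every $(x,s)\in Y$ and every $N$,
\begin{align*}
&\Big|\frac{1}{N}\sum_{n=1}^{N}\tilde{f}(S^{a_n}(x,s))\tilde{g}(S^{b_n}(x,s))-\frac{1}{N}\sum_{n=1}^{N}\tilde{f}(S^{a_n'}(x,s))\tilde{g}(S^{b_n'}(x,s))\Big|\\
&\qquad\le \frac{2}{N}\big|\{1\le n\le N:\{\alpha n^c+s\}\in I\}\big|+\frac{2}{N}\big|\{1\le n\le N:\{\beta n^c+s\}\in I'\}\big|,
\end{align*}
where $I=[0,(|\gamma|+1)|\beta|/L_k)\cup[1-(|\gamma|+1)|\beta|/L_k,1)$ and $I'=[0,|\beta|/L_k)\cup[1-|\beta|/L_k,1)$. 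The crucial point is that the right-hand side no longer depends on $x$.

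\textbf{Step 3 (equidistribution, then integration).} By Lemma \ref{lem2-3} the sequences $(\alpha n^c)_n$ and $(\beta n^c)_n$ are equidistributed modulo one (here $c\in(1,2)$ and $\alpha,\beta\neq 0$), and adding the fixed number $s$ preserves equidistribution; hence the two Ces\`aro averages in the last display converge, as $N\to\infty$, to $|I|=2(|\gamma|+1)|\beta|/L_k$ and $|I'|=2|\beta|/L_k$ respectively. Consequently, for every $(x,s)\in Y$, the $\limsup_{N\to\infty}$ of the left-hand side of that display is at most $2|I|+2|I'|=4(|\gamma|+2)|\beta|/L_k<4/k$, the last inequality by \eqref{eq2-16}; integrating over $Y$, which has total $\nu$-mass $1$, gives \eqref{eq2-17}.

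The argument is almost entirely bookkeeping; the only steps requiring genuine care are Steps 1 and 2 --- peeling off the two nested floors in $a_n'$ to obtain $|a_n-a_n'|\le(|\gamma|+1)|\beta|/L_k$, and observing that the set of ``bad'' $s$ for the index $n$ is precisely the translate by $-\alpha n^c$ (respectively $-\beta n^c$) modulo one of a fixed interval of that length, so that Lemma \ref{lem2-3} applies verbatim. The ``$+3$'' in \eqref{eq2-16} leaves ample room for the constants, and the bound $\lesssim 1/k$ in \eqref{eq2-17} holds with implied constant $4$. Note, finally, that the irrationality of $\gamma$, of $\alpha/L_k$ and of $\beta/L_k$ is nowhere used in this estimate; those hypotheses are recorded here only because the lemma will later be fed into Daskalakis's machinery in Section \ref{section4}.
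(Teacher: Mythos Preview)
Your proof is correct and follows essentially the same approach as the paper's: both reduce to counting the indices $n$ at which the relevant floor functions can differ and then invoke the equidistribution of $\alpha n^c$ and $\beta n^c$ modulo one (Lemma~\ref{lem2-3}). Your version is in fact slightly more direct---you apply equidistribution to the shifted sequences $\alpha n^c+s$ and $\beta n^c+s$ and obtain a pointwise bound valid for (almost) every $(x,s)$, whereas the paper first excises the strip $s\notin(2/k,1-2/k)$ and then works with the $s$-dependent set $I_k(s)$; both routes land on the same $\lesssim 1/k$ estimate.
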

\begin{proof}
	\begin{align*}
		& 	\int_{Y}\limsup_{N\to\infty}\Big|\frac{1}{N}\sum_{n=1}^{N}\tilde{f}(S^{\alpha n^c}(x,s))\tilde{g}(S^{\beta n^c}(x,s))
		\\ & \hspace{1cm} -
		\frac{1}{N}\sum_{n=1}^{N}\tilde{f}(S^{\frac{\beta}{L_k}\lfloor \gamma \lfloor L_kn^c\rfloor\rfloor}(x,s))\tilde{g}(S^{\frac{\beta}{L_k}\lfloor L_kn^c\rfloor}(x,s))\Big|d\nu(x,s)
		\\ \lesssim &
			\frac{1}{k}+\int_{X}\int_{2/k}^{1-2/k}\limsup_{N\to\infty}\Big|\frac{1}{N}\sum_{n=1}^{N}\tilde{f}(S^{\alpha n^c}(x,s))\tilde{g}(S^{\beta n^c}(x,s))
		\\ & \hspace{1cm} -
	\frac{1}{N}\sum_{n=1}^{N}\tilde{f}(S^{\frac{\beta}{L_k}\lfloor \gamma \lfloor L_kn^c\rfloor\rfloor}(x,s))\tilde{g}(S^{\frac{\beta}{L_k}\lfloor L_kn^c\rfloor}(x,s))\Big|d\mu(x)ds
		\\ \lesssim &
	\frac{1}{k}+\int_{\frac{2}{k}}^{1-\frac{2}{k}}\limsup_{N\to\infty}\frac{|\{1\le n\le N:\{\alpha n^c\}\in I_k(s)\ \text{or}\ \{\beta n^c\}\in I_k(s)\}|}{N}ds\ (\text{by \eqref{eq2-16}})
		\\ \lesssim &
		\frac{1}{k}.\ (\text{by Lemma \ref{lem2-3}})
	\end{align*}
	
	The proof is complete.
\end{proof}
\begin{lemma}\label{lem2-6}
	Assume that $\gamma=p/q$, where $(p,q)\in \Z\times \N$ and $\text{gcd}(p,q)=1$. Fix $k\in \N$ with $k>10^9$. Let $H_k>0$ be such that $\alpha/H_k,\beta/H_k\in \R\backslash \Q$ and
	\begin{equation*}
	\max(|\gamma||\beta|/H_k,|\beta|/H_k)<\frac{1}{k}.
	\end{equation*} Then we have that 
	\begin{equation}\label{eq2-21}
	\begin{split}
	& 	\int_{Y}\limsup_{N\to\infty}\Big|\frac{1}{N}\sum_{n=1}^{N}\tilde{f}(S^{\alpha n^c}(x,s))\tilde{g}(S^{\beta n^c}(x,s))
	\\ & \hspace{2cm} -
	\frac{1}{N}\sum_{n=1}^{N}\tilde{f}(S^{\frac{\beta}{qH_k}p\lfloor H_kn^c\rfloor}(x,s))\tilde{g}(S^{\frac{\beta}{qH_k}q\lfloor H_kn^c\rfloor}(x,s))\Big|d\nu(x,s)\lesssim \frac{1}{k}.
	\end{split}
	\end{equation}
\end{lemma}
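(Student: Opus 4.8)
The plan is to mimic exactly the argument used for Lemma \ref{lem2-5}, replacing the irrational-$\gamma$ flow reparametrization by the rational one. Write $\gamma = p/q$ with $\gcd(p,q)=1$. The two averages to be compared are $\frac1N\sum_{n=1}^N \tilde f(S^{\alpha n^c}(x,s))\tilde g(S^{\beta n^c}(x,s))$ and $\frac1N\sum_{n=1}^N \tilde f\big(S^{\frac{\beta}{qH_k}p\lfloor H_k n^c\rfloor}(x,s)\big)\tilde g\big(S^{\frac{\beta}{qH_k}q\lfloor H_k n^c\rfloor}(x,s)\big)$. First I would observe that $\frac{\beta}{qH_k}q\lfloor H_k n^c\rfloor = \frac{\beta}{H_k}\lfloor H_k n^c\rfloor$, which approximates $\beta n^c$ with error $|\beta|/H_k < 1/k$, and $\frac{\beta}{qH_k}p\lfloor H_k n^c\rfloor = \frac{\alpha}{H_k}\lfloor H_k n^c\rfloor$ (since $\beta p/q = \alpha$), which approximates $\alpha n^c$ with error $|\alpha|/H_k = |\gamma||\beta|/H_k < 1/k$. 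So the choice of $H_k$ is precisely what makes both exponents land within $1/k$ of the target, and the rationality of $\gamma$ is what lets us realize both reparametrized exponents as integer multiples of the single step size $\frac{\beta}{qH_k}$.

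The key step is then the same telescoping/continuity estimate for the flow $(S^t)$ that underlies Lemma \ref{lem2-5}: for a fixed base point $(x,s)$, replacing the real exponent $t$ by $t'$ with $|t-t'|$ small changes $S^t(x,s)$ only when the fractional part $s+t \bmod 1$ is within that small distance of an integer, i.e. when $\{t\}$ (equivalently $\{\alpha n^c\}$ or $\{\beta n^c\}$, up to the shift by $s$) lies in a short arc. Concretely I would split $Y$ according to whether $s\in (2/k,1-2/k)$ or not; the complementary strip contributes $\lesssim 1/k$ trivially by $1$-boundedness. On the good strip, the two averages differ in the $n$-th term only if $\{\alpha n^c\}$ or $\{\beta n^c\}$ falls in a set of the form $I_k(s)$ (length $\lesssim 1/k$), exactly as in the displayed chain of inequalities in the proof of Lemma \ref{lem2-5}. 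Averaging the indicator and using the equidistribution of $\alpha n^c$ and $\beta n^c$ modulo one (Lemma \ref{lem2-3}, applicable since $c\in(1,2)$ and $\alpha,\beta\neq 0$) bounds the $\limsup$ of that density by $\lesssim 1/k$, uniformly enough in $s$ to integrate. Putting the pieces together gives the claimed bound \eqref{eq2-21}.

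The only point needing a touch of care — and the main (mild) obstacle — is bookkeeping the error from the \emph{inner} floor $\lfloor H_k n^c\rfloor$ as opposed to $H_k n^c$ itself: the exponent $\frac{\beta}{qH_k}q\lfloor H_k n^c\rfloor$ is really $\frac{\beta}{H_k}\lfloor H_k n^c\rfloor$, so the total deviation from $\beta n^c$ is at most $\frac{|\beta|}{H_k}|H_k n^c - \lfloor H_k n^c\rfloor| \le |\beta|/H_k$, and similarly $\frac{|\alpha|}{H_k}$ for the other exponent; both are $<1/k$ by the hypothesis on $H_k$. One also checks that the requirement $\alpha/H_k,\beta/H_k\notin\Q$ is harmless here (it is imposed only for later use in Section \ref{section3}) and plays no role in this comparison estimate. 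With these observations the proof is a line-by-line transcription of the proof of Lemma \ref{lem2-5}, so I would present it compactly in the same four-line displayed format.
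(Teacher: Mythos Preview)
Your proposal is correct and matches the paper's own proof, which simply states that the argument is a minor modification of that of Lemma~\ref{lem2-5}. Your identification of the key simplification in the rational case---that $\frac{\beta}{qH_k}p\lfloor H_kn^c\rfloor=\frac{\alpha}{H_k}\lfloor H_kn^c\rfloor$ and $\frac{\beta}{qH_k}q\lfloor H_kn^c\rfloor=\frac{\beta}{H_k}\lfloor H_kn^c\rfloor$, so both exponents deviate from $\alpha n^c,\beta n^c$ by less than $1/k$---and your observation that the irrationality hypotheses on $\alpha/H_k,\beta/H_k$ play no role here are exactly right.
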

\begin{proof}
	The proof is a minor modification of that of Lemma \ref{lem2-5}.
\end{proof}

Fix $\lambda\in \{2^{1/n}:n\ge 1\}$. Combining Lemma \ref{lem2-2} - \ref{lem2-6}, proving the existence of limit $$\lim_{N\to\infty}\frac{1}{N}\sum_{n=1}^{N}f(T^{\lfloor \alpha n^c \rfloor}x)g(T^{\lfloor \beta n^c \rfloor}x)$$ almost everywhere reduces to showing the following two statements:

\noindent\textbf{(Irrational case)} Assume that $\gamma$ is irrational. Then 
for any $k\in \N$ with $k>10^9$, we have that
\begin{equation}\label{eq2-18}
		\int_{Y}\limsup_{m,m'\to\infty}\left|\frac{1}{\lfloor \lambda^m\rfloor}\sum_{n=1}^{\lfloor \lambda^m\rfloor}S_k^{\lfloor \gamma \lfloor L_kn^c\rfloor\rfloor}\tilde{f}\cdot S_k^{\lfloor L_kn^c\rfloor}\tilde{g}-\frac{1}{\lfloor \lambda^{m'}\rfloor}\sum_{n=1}^{\lfloor \lambda^{m'}\rfloor}S_k^{\lfloor \gamma \lfloor L_kn^c\rfloor\rfloor}\tilde{f}\cdot S_k^{\lfloor L_kn^c\rfloor}\tilde{g}\right|d\nu\lesssim \frac{1}{k},
\end{equation} where $S_k=S^{\beta/L_k}$.

\noindent\textbf{(Rational case)} Assume that $\gamma=p/q$, where $(p,q)\in \Z\times \N$ and $\text{gcd}(p,q)=1$. Then 
for any $k\in \N$ with $k>10^9$, the limit
\begin{equation}\label{eq2-22}
\lim_{m\to\infty}\frac{1}{\lfloor \lambda^m\rfloor}\sum_{n=1}^{\lfloor \lambda^m\rfloor}\tilde{f}(R_k^{p\lfloor H_kn^c\rfloor}y)\tilde{g}(R_k^{q\lfloor H_kn^c\rfloor}y)
\end{equation} exists for $\nu$-a.e. $y\in Y$, where $R_k=S^{\beta/qH_k}$.

We will use Daskalakis's method to deal with the ergodic averages in \eqref{eq2-18} and \eqref{eq2-22}. Before this, let us introduce some notations and a key proposition in Daskalakis's method.

Fix $k\in \N$ with $k>10^9$. Let $$\Lambda_k=\{\lfloor L_kn^c\rfloor:n\ge 1\}, \Delta_k=\{\lfloor H_kn^c\rfloor:n\ge 1\},$$ $\psi_k$ be the compositional inverse of the function $L_k x^c$, and $\phi_k$ be the compositional inverse of the function $H_k x^c$. For each $N\in \N$, we set $[N]=\{1,\ldots,N\}$. Now, we define some ergodic averages:
$$B_{k,N}^{(\text{ir})}(\tilde{f},\tilde{g})(y):=\frac{1}{|\Lambda_k\cap [N]|}\sum_{n\in [N]}1_{\Lambda_k}(n)\tilde{f}(S_k^{\lfloor \gamma n\rfloor}y)\tilde{g}(S_k^{n}y),$$
$$B_{k,N}^{(\text{ra})}(\tilde{f},\tilde{g})(y):=\frac{1}{|\Delta_k\cap [N]|}\sum_{n\in [N]}1_{\Delta_k}(n)\tilde{f}(R_k^{pn}y)\tilde{g}(R_k^{qn}y),$$
$$A_{k,N}^{(\text{ir})}(\tilde{f},\tilde{g})(y):=\frac{1}{N}\sum_{n\in [N]}\tilde{f}(S_k^{\lfloor \gamma n\rfloor}y)\tilde{g}(S_k^{n}y),$$ $$A_{k,N}^{(\text{ra})}(\tilde{f},\tilde{g})(y):=\frac{1}{N}\sum_{n\in [N]}\tilde{f}(R_k^{pn}y)\tilde{g}(R_k^{qn}y),$$
$$E_{k,N}^{(\text{ir})}(\tilde{f},\tilde{g})(y):=\frac{1}{|\Lambda_k\cap [N]|}\sum_{n\in [N]}\left(\Phi(-\psi_k(n+1))-\Phi(-\psi_k(n))\right)\tilde{f}(S_k^{\lfloor \gamma n\rfloor}y)\tilde{g}(S_k^{n}y),$$
$$E_{k,N}^{(\text{ra})}(\tilde{f},\tilde{g})(y):=\frac{1}{|\Delta_k\cap [N]|}\sum_{n\in [N]}\left(\Phi(-\phi_k(n+1))-\Phi(-\phi_k(n))\right)\tilde{f}(R_k^{ pn}y)\tilde{g}(R_k^{qn}y),$$
where $\Phi:\R\to [-1/2,1/2],x\mapsto \{x\}-1/2$.

By Theorem \ref{thm1-1} and \ref{thm1-2}, we have that for $\nu$-a.e. $y\in Y$, $$\lim_{N\to\infty}A_{k,N}^{(\text{ir})}(\tilde{f},\tilde{g})(y)\quad \text{and}\quad \lim_{N\to\infty}A_{k,N}^{(\text{ra})}(\tilde{f},\tilde{g})(y)$$  exist. By repeating the arguments in \cite[Proof of Proposition 2.8]{Daskalakis2025}, we obtain the following proposition:
\begin{prop}\label{prop2-1}
	Fix $k\in \N$ with $k>10^9$. Then for $\nu$-a.e. $y\in Y$,
	\begin{equation}\label{eq2-19}
	 \limsup_{N\to\infty}\left|B_{k,N}^{(\text{ir})}(\tilde{f},\tilde{g})(y)-A_{k,N}^{(\text{ir})}(\tilde{f},\tilde{g})(y)\right|\le \limsup_{N\to\infty}\left|E_{k,N}^{(\text{ir})}(\tilde{f},\tilde{g})(y)\right|,
	\end{equation}
	\begin{equation}\label{eq2-23}
		\limsup_{N\to\infty}\left|B_{k,N}^{(\text{ra})}(\tilde{f},\tilde{g})(y)-A_{k,N}^{(\text{ra})}(\tilde{f},\tilde{g})(y)\right|\le \limsup_{N\to\infty}\left|E_{k,N}^{(\text{ra})}(\tilde{f},\tilde{g})(y)\right|.
		\end{equation}
\end{prop}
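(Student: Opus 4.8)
\noindent I would prove \eqref{eq2-19} and \eqref{eq2-23} by one and the same argument, namely the one behind \cite[Proposition 2.8]{Daskalakis2025}, so let me describe it for the irrational estimate \eqref{eq2-19}; the rational estimate \eqref{eq2-23} is obtained verbatim upon replacing $(\Lambda_k,\psi_k,S_k)$ by $(\Delta_k,\phi_k,R_k)$ and the sequence $n\mapsto\tilde f(S_k^{\lfloor\gamma n\rfloor}y)\tilde g(S_k^{n}y)$ by $n\mapsto\tilde f(R_k^{pn}y)\tilde g(R_k^{qn}y)$, invoking Theorem~\ref{thm1-1} in place of Theorem~\ref{thm1-2} (legitimate since $p,q\in\Z$ are nonzero and $p\neq q$, because $\gamma\neq0$ and $|\gamma|\neq1$). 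Fix $y$ in the $\nu$-full measure set where $\ell:=\lim_{N\to\infty}A_{k,N}^{(\text{ir})}(\tilde f,\tilde g)(y)$ exists (as recalled just above the statement), and abbreviate $a_n:=\tilde f(S_k^{\lfloor\gamma n\rfloor}y)\tilde g(S_k^{n}y)$, $A_n:=a_1+\cdots+a_n$, so that $|a_n|\le1$ and $A_n=\ell n+r_n$ with $r_n=o_{n\to\infty}(n)$.

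The first step is a combinatorial identity converting the sparse average $B_{k,N}^{(\text{ir})}$ into a smooth one plus the error term $E_{k,N}^{(\text{ir})}$. Since $\psi_k(x)=(x/L_k)^{1/c}$ is the compositional inverse of $L_kx^c$, an integer $n$ lies in $\Lambda_k$ exactly when the interval $[\psi_k(n),\psi_k(n+1))$ contains a positive integer, and for all but a finite ($k$-dependent) set of $n$ it contains at most one, because $\psi_k$ is concave with $\psi_k'\to0$. Writing $w_n:=\psi_k(n+1)-\psi_k(n)>0$ and using $\lceil x\rceil=x+\Phi(-x)+\tfrac12$, this gives
\[
1_{\Lambda_k}(n)=\lceil\psi_k(n+1)\rceil-\lceil\psi_k(n)\rceil=w_n+\bigl(\Phi(-\psi_k(n+1))-\Phi(-\psi_k(n))\bigr)
\]
off a finite index set on which both sides are $O_k(1)$. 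Multiplying by $a_n$, summing over $n\in[N]$, dividing by $|\Lambda_k\cap[N]|$ (which tends to $\infty$), and recognising the $\Phi$-part as $|\Lambda_k\cap[N]|\,E_{k,N}^{(\text{ir})}(\tilde f,\tilde g)(y)$, I obtain
\[
B_{k,N}^{(\text{ir})}(\tilde f,\tilde g)(y)=\frac{1}{|\Lambda_k\cap[N]|}\sum_{n\in[N]}w_na_n+E_{k,N}^{(\text{ir})}(\tilde f,\tilde g)(y)+o_{N\to\infty}(1),
\]
the exceptional terms contributing only $O_k(1)/|\Lambda_k\cap[N]|\to0$.

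The second step is to show that the smooth average $|\Lambda_k\cap[N]|^{-1}\sum_{n\in[N]}w_na_n$ has the same limit $\ell$ as $A_{k,N}^{(\text{ir})}(\tilde f,\tilde g)(y)$. Put $W_N:=\sum_{n\in[N]}w_n=\psi_k(N+1)-\psi_k(1)$. A direct count gives $|\Lambda_k\cap[N]|=\psi_k(N+1)+O(1)$, hence $W_N/|\Lambda_k\cap[N]|\to1$, so it suffices to prove $W_N^{-1}\sum_{n\in[N]}w_na_n\to\ell$. Summation by parts, followed by $A_n=\ell n+r_n$, rewrites $\sum_{n\in[N]}w_na_n$ as $\ell W_N$ (using the case $a_n\equiv1$, i.e.\ $w_NN-\sum_{n<N}(w_{n+1}-w_n)n=W_N$) plus the remainder $w_Nr_N-\sum_{n<N}(w_{n+1}-w_n)r_n$. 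Concavity of $\psi_k$ makes $(w_n)$ decreasing, so $\sum_{n<N}|w_{n+1}-w_n|\le w_1$ and $\sum_{n<N}|w_{n+1}-w_n|\,n\le W_{N-1}$; since moreover $w_N$ and $W_N$ are of order $N^{1/c-1}$ and $N^{1/c}$ (with $k$-dependent constants), feeding $r_n=o(n)$ into the remainder and splitting the $n$-sum at a large fixed index shows that the remainder is $o_{N\to\infty}(W_N)$. Hence $W_N^{-1}\sum_{n\in[N]}w_na_n\to\ell$.

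Combining the two steps yields $B_{k,N}^{(\text{ir})}(\tilde f,\tilde g)(y)-A_{k,N}^{(\text{ir})}(\tilde f,\tilde g)(y)=E_{k,N}^{(\text{ir})}(\tilde f,\tilde g)(y)+o_{N\to\infty}(1)$ for $\nu$-a.e.\ $y$, which gives \eqref{eq2-19} (in fact with equality of the two $\limsup$'s), and the identical argument in the rational setting gives \eqref{eq2-23}. I expect the only genuine point to be the last estimate in the second step: the summation-by-parts remainder closes only because one may use $r_n=o(n)$ — equivalently, the a.e.\ convergence of $A_{k,N}^{(\text{ir})}$, $A_{k,N}^{(\text{ra})}$ furnished by Theorems~\ref{thm1-1} and \ref{thm1-2} — rather than merely the trivial bound $r_n=O(n)$; the bookkeeping around the finitely many "bad" indices in the first step is harmless, as it only perturbs a quantity by $O_k(1)/|\Lambda_k\cap[N]|$.
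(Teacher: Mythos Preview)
Your proof is correct and follows exactly the approach the paper intends: the paper itself gives no details and simply says ``by repeating the arguments in \cite[Proof of Proposition 2.8]{Daskalakis2025}'', and what you have written is a faithful unpacking of that argument---the identity $1_{\Lambda_k}(n)=w_n+\Phi(-\psi_k(n+1))-\Phi(-\psi_k(n))$ together with an Abel-summation comparison of the $w_n$-weighted average with the Ces\`aro average, using the a.e.\ convergence of $A_{k,N}^{(\cdot)}$ from Theorems~\ref{thm1-1} and~\ref{thm1-2}. Your remark that one actually obtains equality of the two $\limsup$'s is correct as well.
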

Combining the above proposition, Theorem \ref{thm1-1} and \ref{thm1-2}, proving the inequality \eqref{eq2-18} and the existence of limit in \eqref{eq2-22} almost everywhere reduces to showing that for each $k\in \N$ with $k>10^9$,
\begin{equation}\label{eq2-20}
	\lim_{m\to\infty}E_{k,\lfloor \lambda^m\rfloor}^{(\text{ra})}(\tilde{f},\tilde{g})(y)=0 
\end{equation}
for $\nu$-a.e. $y\in Y$, and 
\begin{equation}\label{eq2-24}
\int_{Y}\limsup_{m,m'\to \infty}\left|E_{k,\lfloor \lambda^m\rfloor}^{(\text{ir})}(\tilde{f},\tilde{g})(y)-E_{k,\lfloor \lambda^{m'}\rfloor}^{(\text{ir})}(\tilde{f},\tilde{g})(y)\right|d\nu\lesssim \frac{1}{k}.
\end{equation}

\section{Proof of Theorem \ref{TB}: Rational case}\label{section3}
In this section, we prove that \eqref{eq2-20} holds almost everywhere and determine the associated characteristic factors of the ergodic averages in \eqref{eq1-3}.

After analyzing Daskalakis's arguments in the \cite[Proof of Prosition 2.24]{Daskalakis2025}, we have that to prove \eqref{eq2-20} holds almost everywhere, it remains to build the following Lemma \ref{lem2-7}.

Before stating it, we introduce some notations. For each $N\in \N$, we define 
$$\mu_{N}(n)={|\{(h_1,h_2)\in [N]:h_1-h_2=n\}|}/{N^2}.$$ 
For any $f:\Z\to \C$ and $h_1\in \Z$, we define $\Delta_{h_1}f(x)=f(x)\overline{f(x+h_1)}$. For any $s\in \N$ and $(h_1,\ldots,h_s)\in \Z^s$, $$\Delta_{h_1,\ldots,h_s}f(x):=\Delta_{h_1}\cdots\Delta_{h_s}f(x).$$
\begin{lemma}\label{lem2-7}
	$($\em{Analogue of \cite[Lemma 3.3]{Daskalakis2025}}$)$ Let $S=10^{9}(|p|+q+1)$ and $f_0,f_1,f_2,f_3:\Z\to \C$ be $1$-bounded functions with the supports contained in $[-SN,SN]$. Then we have that 
	$$\left|\sum_{x\in \Z}\sum_{n\in \Z}f_{0}(x)f_{1}(x+pn)f_{2}(x+qn)f_{3}(n)\right|\lesssim_{S} N^{13}\sum_{h_3\in \Z}\mu_{N}(h_3)\sum_{|h_1|,|h_2|\le |N|}\sum_{n\in \Z}\Delta_{h_1,h_2,h_3}f_{3}(n).$$
\end{lemma}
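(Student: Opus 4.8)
Here is how I would prove the estimate.

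The plan is to run the standard Gowers--Cauchy--Schwarz ``degree--lowering'' argument on the quadrilinear form and then finish with one van der Corput step that supplies the weight $\mu_N(h_3)$. Write $\Sigma=\sum_{x\in\Z}\sum_{n\in\Z}f_0(x)f_1(x+pn)f_2(x+qn)f_3(n)$, and note that $p$, $q$ and $q-p$ are all nonzero (since $\gcd(p,q)=1$, $|p|\neq q$ and $p\neq0$), so the three linear forms $x$, $x+pn$, $x+qn$ in $(x,n)\in\Z^2$ are pairwise distinct.

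First I would strip off $f_1$ and then $f_0$ by two Cauchy--Schwarz steps. Substituting $u=x+pn$ (a bijection of $\Z^2$ for fixed $n$) rewrites $\Sigma$ as $\sum_u f_1(u)\sum_n f_0(u-pn)f_2(u+(q-p)n)f_3(n)$; Cauchy--Schwarz in $u$ (whose range has length $O_S(N)$), then expanding the square and calling the relative shift in the $n$-variable $h_1$, gives a bound $|\Sigma|^2\lesssim_S N\sum_{h_1}\bigl|\sum_{u,n}F_0(u-pn;h_1)F_2(u+(q-p)n;h_1)\Delta_{h_1}f_3(n)\bigr|$, where $F_0,F_2$ are the associated $1$-bounded multiplicative derivatives of $f_0,f_2$, still supported in intervals of length $O_S(N)$, and $|h_1|=O_S(N)$. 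Repeating this with $f_0$ in the variable $w=u-pn$ (relative shift $h_2$) replaces the inner sum by one of the form $\sum_{w,n}G_2(w+qn;h_1,h_2)\Delta_{h_1,h_2}f_3(n)$, with $G_2$ again $1$-bounded of $O_S(N)$-support and $|h_2|=O_S(N)$; substituting $v=w+qn$ there (again a bijection of $\Z^2$) and noting that $G_2(v;h_1,h_2)$ no longer involves $n$, the sum factors as $\bigl(\sum_vG_2(v;h_1,h_2)\bigr)\bigl(\sum_n\Delta_{h_1,h_2}f_3(n)\bigr)$ with first factor $O_S(N)$. Combining these bounds with a Cauchy--Schwarz over the $O_S(N)$ admissible $h_1$ yields
\[
|\Sigma|^4\lesssim_S N^5\sum_{|h_1|,|h_2|=O_S(N)}\Bigl|\sum_n\Delta_{h_1,h_2}f_3(n)\Bigr|.
\]

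It remains to bring in the $\mu_N$ weight. Squaring once more and applying Cauchy--Schwarz over the $O_S(N^2)$ pairs $(h_1,h_2)$ gives $|\Sigma|^8\lesssim_S N^{12}\sum_{h_1,h_2}\bigl|\sum_n\Delta_{h_1,h_2}f_3(n)\bigr|^2$. For each $(h_1,h_2)$ I would then apply van der Corput in $n$ with window $[N]$ to $\sum_n\Delta_{h_1,h_2}f_3(n)$: since $\Delta_{h_1,h_2}f_3$ is $1$-bounded and supported in $[-SN,SN]$, writing this sum as $N^{-1}\sum_{h\in[N]}\sum_n\Delta_{h_1,h_2}f_3(n+h)$ and using Cauchy--Schwarz over the $O_S(N)$ relevant $n$ yields
\[
\Bigl|\sum_n\Delta_{h_1,h_2}f_3(n)\Bigr|^2\lesssim_S N\sum_{h_3}\mu_N(h_3)\sum_n\Delta_{h_1,h_2,h_3}f_3(n),
\]
where the right-hand side is automatically real and $\ge 0$ (it is the expansion of a square) and $\mu_N(h_3)=|\{(h,h')\in[N]^2:h-h'=h_3\}|/N^2$ appears exactly as in the statement. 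Substituting, one obtains
\[
|\Sigma|^8\lesssim_S N^{13}\sum_{h_3}\mu_N(h_3)\sum_{|h_1|,|h_2|=O_S(N)}\sum_n\Delta_{h_1,h_2,h_3}f_3(n),
\]
which is the asserted inequality; the three successive squarings necessarily leave the eighth power of $\Sigma$ on the left, and the ranges of $h_1,h_2$ are as stated up to the constant absorbed into $\lesssim_S$.

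The argument is mechanical and presents no real conceptual difficulty; the three points to watch are: (i) every change of variables --- $u=x+pn$, $w=u-pn$, $v=w+qn$, and the $n$-shifts --- must be a genuine bijection of $\Z$ or $\Z^2$, which they all are, being composites of shears $(x,n)\mapsto(x+cn,n)$ and translations (this is why nothing beyond $p,q,q-p\neq0$ is used); (ii) at each stage the derivative functions $F_0,F_2,G_2$ and $\Delta_{h_1}f_3,\Delta_{h_1,h_2}f_3$ must remain $1$-bounded with support in an interval of length $O_S(N)$, so that the admissible shift ranges are $O_S(N)$ and each Cauchy--Schwarz really costs only a factor $O_S(N)$; and (iii) these losses must be tracked against the threefold squaring so as to land exactly on $N^{13}$. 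I expect (iii) --- not mislaying a power of $N$ across the van der Corput step and the Cauchy--Schwarz steps --- to be the only thing likely to cause trouble; (i) and (ii) are routine once one keeps track of which factor is pulled ``outside'' at each Cauchy--Schwarz.
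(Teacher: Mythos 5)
Your argument is the standard Gowers--Cauchy--Schwarz/van der Corput scheme, which is exactly what the paper (implicitly, by deferring to Daskalakis) intends, and it is the same three-step computation the paper carries out explicitly in the dynamical setting in Section \ref{section4}; the power count landing on $N^{13}$ is correct. Two discrepancies with the printed statement are worth recording. First, what you prove is $|\Sigma|^{8}\lesssim_S N^{13}(\cdots)$, not a bound on $|\Sigma|$ itself; you are right that the eighth power is forced, and in fact the first-power inequality as printed is vacuous (trivially $|\Sigma|\lesssim_S N^2$ while the right-hand side is always $\gtrsim N^{14}$, since the diagonal term $h_1=h_2=0$ alone contributes $\gtrsim N$ to the inner sum), whereas the eighth-power version is the nontrivial one and is the one the paper actually uses --- note the exponent $1/8$ in the final display of the proof of \eqref{eq2-31}. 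So the lemma's left-hand side should be read as $|\cdots|^8$, and your proof establishes the intended statement. Second, your first two Cauchy--Schwarz steps produce shift ranges $|h_1|,|h_2|\lesssim_S N$ rather than $|h_1|,|h_2|\le N$; since for each fixed $(h_1,h_2)$ the quantity $\sum_{h_3}\mu_N(h_3)\sum_n\Delta_{h_1,h_2,h_3}f_3(n)=N^{-2}\sum_m\bigl|\sum_{a\in[N]}\Delta_{h_1,h_2}f_3(m-a)\bigr|^2$ is nonnegative, enlarging the range only increases the right-hand side, so your bound is formally weaker than the one printed. To land on the range $\le N$ exactly, one should apply the windowed van der Corput inequality (Lemma \ref{lem4-1} with $H=[N]$) in the $n$-variable at each of the three differencing steps, as the paper does in Section \ref{section4}, instead of plain Cauchy--Schwarz over the full support in the first two. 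This is cosmetic for the application, but it is the one place where your write-up does not literally deliver the stated inequality.
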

\begin{proof}
	The proof is a minor modification of Daskalakis's arguments.
\end{proof}
At this point, we have established that for $\gamma\in \Q$, the limit in \eqref{eq2-13} exists almost everywhere. Next, we characterize the specific form of the limit function.

Fix $k\in \N$ with $k>10^9$. Combining \eqref{eq2-20}, \eqref{eq2-23}, and  \eqref{eq2-21}, we have that 
\begin{equation}\label{eq2-25}
\begin{split}
& 	\int_{Y}\limsup_{N\to\infty}\Big|\frac{1}{N}\sum_{n=1}^{N}\tilde{f}(S^{\alpha n^c}(x,s))\tilde{g}(S^{\beta n^c}(x,s))
\\ & \hspace{2cm} -
\frac{1}{N}\sum_{n=1}^{N}\tilde{f}(S^{\frac{\beta}{qH_k}pn}(x,s))\tilde{g}(S^{\frac{\beta}{qH_k}qn}(x,s))\Big|d\nu(x,s)\lesssim \frac{1}{k}.
\end{split}
\end{equation}

Next, we prove the following proposition.
\begin{prop}\label{prop3-1}
	For $\mu$-a.e. $x\in X$,
	\begin{align*}
	& \lim_{N\to\infty}\frac{1}{N}\sum_{n=1}^{N}f(T^{\lfloor \alpha n^c \rfloor}x)g(T^{\lfloor \beta n^c \rfloor}x)
	\\ & \hspace{2cm} =
	\frac{1}{|p|}\sum_{r=0}^{q-1}\int_{pr/q}^{p(r+1)/q}\lim_{N\to\infty}\frac{1}{N}\sum_{n=0}^{N-1}(T^r g)(T^{qn}x)(T^{\lfloor t\rfloor}f)(T^{pn}x)dt.
	\end{align*}
\end{prop}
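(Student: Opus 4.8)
The plan is to push Daskalakis's reduction to its conclusion. By \eqref{eq2-25}, after taking $\limsup_{N}$ the average $\frac1N\sum_{n=1}^{N}\tilde{f}(S^{\alpha n^c}(x,s))\tilde{g}(S^{\beta n^c}(x,s))$ agrees in $L^{1}(\nu)$ with $A_{k,N}^{(\text{ra})}(\tilde f,\tilde g)(x,s)$ up to an error $O(1/k)$, so it suffices to (i) compute the limit $G_k(x,s):=\lim_{N\to\infty}A_{k,N}^{(\text{ra})}(\tilde f,\tilde g)(x,s)$, which exists for $\nu$-a.e.\ $(x,s)$ by Theorem~\ref{thm1-1} applied to $(Y,\Y,\nu,R_k)$ with the distinct non-zero integers $p,q$; (ii) let $H_k\to\infty$; and (iii) transfer the outcome back to \eqref{eq1-3} via \eqref{eq2-15}. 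I will also use that, for every $m\in\Z$ and $0\le r<q$, the limit $W_r(x,m):=\lim_{N\to\infty}\frac1N\sum_{n=0}^{N-1}(T^{r}g)(T^{qn}x)(T^{m}f)(T^{pn}x)$ exists for $\mu$-a.e.\ $x$ by Theorem~\ref{thm1-1}.

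\emph{Computing $G_k$.} Write $\rho_k=\beta/H_k$, so $R_k^{qn}=S^{\rho_k n}$ and $R_k^{pn}=S^{(p/q)\rho_k n}$ with $\rho_k$ and $(p/q)\rho_k$ irrational and $\rho_k\to 0$. Since $S^{t}(x,s)=(T^{\lfloor t+s\rfloor}x,\{t+s\})$, setting $u_n:=\rho_k n+s$ yields
\begin{equation*}
A_{k,N}^{(\text{ra})}(\tilde f,\tilde g)(x,s)=\frac1N\sum_{n=1}^{N}f\big(T^{\lfloor (p/q)u_n+(1-p/q)s\rfloor}x\big)\,g\big(T^{\lfloor u_n\rfloor}x\big).
\end{equation*}
I would group the summands by the value $j=\lfloor u_n\rfloor$, a run of $|\rho_k|^{-1}+O(1)$ consecutive $n$'s on which $g(T^{\lfloor u_n\rfloor}x)=g(T^{j}x)$ is constant and $\{u_n\}$ traces an arithmetic progression of step $|\rho_k|$ sweeping once through $[0,1)$. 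Writing $j=qa+r$ with $0\le r<q$, the $f$-factor is $\big(T^{\lfloor (p/q)r+(p/q)w+(1-p/q)s\rfloor}f\big)(T^{pa}x)$ viewed as a function of $w=\{u_n\}$, a step function with at most $|p|+1$ jumps; by Koksma's inequality its block-average equals $\int_0^1\big(T^{\lfloor (p/q)r+(p/q)w+(1-p/q)s\rfloor}f\big)(T^{pa}x)\,dw$ up to $O\big((|p|+1)|\rho_k|\big)$, uniformly in $j$. Summing over all blocks, renormalizing, and splitting $j=qa+r$ produces
\begin{equation*}
G_k(x,s)=\frac1q\sum_{r=0}^{q-1}\int_0^1\Big(\lim_{A\to\infty}\frac1A\sum_{a=0}^{A-1}(T^{r}g)(T^{qa}x)\big(T^{\lfloor (p/q)r+(p/q)w+(1-p/q)s\rfloor}f\big)(T^{pa}x)\Big)dw+O\!\Big(\tfrac{(|p|+1)|\beta|}{qH_k}\Big),
\end{equation*}
where interchanging $\int_0^1 dw$ with $\lim_{A\to\infty}$ is legitimate because the floor inside takes only finitely many values, so the bracket is a finite linear combination of the $W_r(x,\cdot)$.

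\emph{Passing to the limit.} Call $\Psi(x,s)$ the displayed main term; it does not depend on $k$. Feeding $G_k=\Psi+O\big((|p|+1)|\beta|/(qH_k)\big)$ into \eqref{eq2-25} and letting $k\to\infty$ forces $\int_Y\limsup_{N}\big|\frac1N\sum_n\tilde f(S^{\alpha n^c}(x,s))\tilde g(S^{\beta n^c}(x,s))-\Psi(x,s)\big|\,d\nu=0$; in particular the flow-average converges to $\Psi(x,s)$ for $\nu$-a.e.\ $(x,s)$. Picking $s_k\in(0,1/k)$ in the full-measure set of $s$ for which this holds $\mu$-a.e.\ in $x$, \eqref{eq2-15} shows $\Psi(x,s_k)$ is within $O(1/k)$ of the limit in \eqref{eq1-3} for $\mu$-a.e.\ $x$, hence that limit equals $\lim_{s\downarrow 0}\Psi(x,s)$. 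As $s\downarrow 0$, $\lfloor (p/q)r+(p/q)w+(1-p/q)s\rfloor\to\lfloor (p/q)(r+w)\rfloor$ for $w$ off a finite set, so dominated convergence gives
\begin{equation*}
\lim_{s\downarrow 0}\Psi(x,s)=\frac1q\sum_{r=0}^{q-1}\int_0^1\Big(\lim_{A\to\infty}\frac1A\sum_{a=0}^{A-1}(T^{r}g)(T^{qa}x)\big(T^{\lfloor (p/q)(r+w)\rfloor}f\big)(T^{pa}x)\Big)dw,
\end{equation*}
and the substitution $t=(p/q)(r+w)$ (an oriented integral when $p<0$) converts the right-hand side into $\frac1{|p|}\sum_{r=0}^{q-1}\int_{pr/q}^{p(r+1)/q}\lim_{N\to\infty}\frac1N\sum_{n=0}^{N-1}(T^{r}g)(T^{qn}x)(T^{\lfloor t\rfloor}f)(T^{pn}x)\,dt$, which is the assertion.

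The crux is the computation of $G_k$: the block decomposition has to be made fully quantitative, so that the accumulated discrepancy errors really are $O\big((|p|+1)|\beta|/(qH_k)\big)=o_k(1)$ — the three-distance theorem (or a direct count) handling the block lengths, Koksma's inequality the within-block averaging, and the two partial boundary blocks contributing $O(1/N)$. The rest is routine: the two interchanges of limiting operations ($\int_0^1 dw$ with $\lim_{A\to\infty}$, and $\lim_{s\downarrow 0}$ with $\int_0^1 dw$ and with $\lim_{A\to\infty}$) reduce to the finiteness and boundedness remarks made above, and the case $p<0$ — where the iterates $T^{pa}$ run backwards and $[pr/q,p(r+1)/q]$ is traversed with its natural orientation — goes through identically.
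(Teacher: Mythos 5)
Your argument is correct in outline and lands on the stated identity, but it carries out the central computation by a different mechanism than the paper. Both proofs share the same skeleton: start from \eqref{eq2-25}, split the linear orbit into residues mod $q$, integrate over a unit cell, invoke Bourgain's theorem (Theorem \ref{thm1-1}) for the inner limits, and transfer back to $X$ via \eqref{eq2-15}. The paper, however, first converts the discrete average $A_{k,N}^{(\text{ra})}$ into the continuous time-integral $\frac1N\int_0^N\tilde f(S^{\frac{\beta}{qH_k}pt}y)\tilde g(S^{\frac{\beta}{qH_k}qt}y)\,dt$ (with error $O(1/k)$ controlled by equidistribution of $\alpha n/H_k$ and $\beta n/H_k$ modulo one), rescales, and appeals to the continuous-time convergence theorem \cite[Theorem 8.30]{BLM2012} to obtain the clean, $k$-free identity \eqref{eq2-26}, and only then evaluates the time-integral blockwise. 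You never pass to continuous time: you evaluate $A_{k,N}^{(\text{ra})}$ directly by grouping $n$ into blocks on which $\lfloor \beta n/H_k+s\rfloor$ is constant and applying Koksma's inequality within each block, so the only external input is Bourgain's theorem; the price is that you must carry the $O(1/H_k)$ discrepancy error and the $s$-dependence all the way to the end and remove them by $k\to\infty$ and $s\downarrow0$. This is a legitimate and more self-contained route, and the quantitative block analysis you defer is routine with exactly the tools you name. Two small points to watch: (i) your oriented substitution in fact produces the prefactor $1/p$, which matches the stated $1/|p|$ only under the unoriented reading of $\int_{pr/q}^{p(r+1)/q}$ when $p<0$ (testing with $f=g=1$ shows this is the intended reading; the paper's own $p<0$ computation has the same convention issue); (ii) when $\beta<0$ your blocks march through negative $j$, so the inner averages involve $T^{-qa}$ and $T^{-pa}$ and you need the forward and backward double averages to agree almost everywhere --- a point the paper's treatment also leaves implicit.
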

\begin{proof}
	\begin{align*}
	& \int_{Y}\limsup_{N\to\infty}\Big|\frac{1}{N}\int_{0}^{N}\tilde{f}(S^{\frac{\beta}{qH_k}pt}(x,s))\tilde{g}(S^{\frac{\beta}{qH_k}qt}(x,s))dt
	\\ & \hspace{2cm} -
	\frac{1}{N}\sum_{n=0}^{N-1}\tilde{f}(S^{\frac{\beta}{qH_k}pn}(x,s))\tilde{g}(S^{\frac{\beta}{qH_k}qn}(x,s))\Big|d\nu(x,s)
	\\ = &
	\int_{Y}\limsup_{N\to\infty}\Big|\frac{1}{N}\sum_{n=0}^{N-1}\int_{0}^{1}\tilde{f}(S^{\frac{\beta}{qH_k}p(n+t)}(x,s))\tilde{g}(S^{\frac{\beta}{qH_k}q(n+t)}(x,s))dt
	\\ & \hspace{2cm} -
	\frac{1}{N}\sum_{n=0}^{N-1}\tilde{f}(S^{\frac{\beta}{qH_k}pn}(x,s))\tilde{g}(S^{\frac{\beta}{qH_k}qn}(x,s))\Big|d\nu(x,s)
	\\ \lesssim &
	\frac{1}{k}+\int_{X}\int_{\frac{2}{k}}^{1-\frac{2}{k}}\limsup_{N\to\infty}\Big|\frac{1}{N}\sum_{n=0}^{N-1}\int_{0}^{1}\tilde{f}(S^{\frac{\beta}{qH_k}p(n+t)}(x,s))\tilde{g}(S^{\frac{\beta}{qH_k}q(n+t)}(x,s))dt
	\\ & \hspace{2cm} -
	\frac{1}{N}\sum_{n=0}^{N-1}\tilde{f}(S^{\frac{\beta}{qH_k}pn}(x,s))\tilde{g}(S^{\frac{\beta}{qH_k}qn}(x,s))\Big|d\nu(x,s)
	\\ \lesssim &
	\frac{1}{k}+\int_{\frac{2}{k}}^{1-\frac{2}{k}}\limsup_{N\to\infty}\Big(\frac{|\{1\le n\le N:\{\alpha n/H_k\}\in I_{k}(s)\}|}{N}
	\\ & \hspace{1cm}
	+\frac{|\{1\le n\le N:\{\beta n/H_k\}\in I_{k}(s)\}|}{N}\Big)ds\ (\text{by the choice of}\ H_k)
	\\ \lesssim &
	\frac{1}{k}.\ (\text{by Lemma \ref{weyl}})
	\end{align*}
	Combining the above calculations, \eqref{eq2-25}, and \cite[Theorem 8.30]{BLM2012}, we have that 
	\begin{equation}\label{eq2-26}
	\begin{split}
	& 	\int_{Y}\limsup_{N\to\infty}\Big|\frac{1}{N}\sum_{n=1}^{N}\tilde{f}(S^{\alpha n^c}(x,s))\tilde{g}(S^{\beta n^c}(x,s))
	\\ & \hspace{2cm} -
	\frac{1}{N}\int_{0}^{N}\tilde{f}(S^{pt}(x,s))\tilde{g}(S^{qt}(x,s))dt\Big|d\nu(x,s)=0.
	\end{split}
	\end{equation}
	
	\eqref{eq2-15} and \eqref{eq2-26} imply that there is $k_0\in \N$ such that for each $k\ge k_0$, there are $s'_k\in (0,1/k)$ and a $\mu$-full measure subset $X'_k$ of $X$ such that for any $x\in X'_k$,
	\begin{equation}\label{eq2-27}
	\limsup_{N\to\infty}\Big|\frac{1}{N}\sum_{n=1}^{N}f(T^{\lfloor \alpha n^c \rfloor}x)g(T^{\lfloor \beta n^c \rfloor}x)-
	\frac{1}{N}\int_{0}^{N}\tilde{f}(S^{pt}(x,s'_k))\tilde{g}(S^{qt}(x,s'_k))dt\Big|\lesssim \frac{1}{k}.
	\end{equation}
	
	Fix $\displaystyle x\in \left(\bigcap_{k=k_0}^{\infty}X'_k\right)\cap \{y\in X:\max(|f(T^n y)|,|g(T^n y|)\le 1\ \text{for any}\ n\in \Z\}$. If $p>0$, we have that 
	\begin{align*}
	& \frac{1}{N}\int_{0}^{N}\tilde{f}(S^{pt}(x,s'_k))\tilde{g}(S^{qt}(x,s'_k))dt
	\\ = &
	\frac{1}{qN}\sum_{n=0}^{qN-1}g(T^n x)\int_{n-qs'_k/p}^{n+1-qs'_k/p}\tilde{f}(S^{pt/q}(x,s'_k))dt+O(\frac{1}{kN})+O(\frac{1}{k})
	\\ = &
	\frac{1}{p}\sum_{r=0}^{q-1}\int_{pr/q}^{p(r+1)/q}\frac{1}{N}\sum_{n=0}^{N-1}(T^r g)(T^{qn}x)(T^{\lfloor t\rfloor}f)(T^{pn}x)dt+O(\frac{1}{kN})+O(\frac{1}{k}).
	\end{align*}
	Combining the above calculation, \eqref{eq2-27} and Theorem \ref{thm1-1}, we have that if $p>0$, then for $\mu$-a.e. $x\in X$, we have that 
	\begin{align*}
	& \lim_{N\to\infty}\frac{1}{N}\sum_{n=1}^{N}f(T^{\lfloor \alpha n^c \rfloor}x)g(T^{\lfloor \beta n^c \rfloor}x)
	\\ & \hspace{2cm} =
	\frac{1}{p}\sum_{r=0}^{q-1}\int_{pr/q}^{p(r+1)/q}\lim_{N\to\infty}\frac{1}{N}\sum_{n=0}^{N-1}(T^r g)(T^{qn}x)(T^{\lfloor t\rfloor}f)(T^{pn}x)dt.
	\end{align*}
	Similarly, if $p<0$, then for $\mu$-a.e. $x\in X$, we have that
	\begin{align*}
	& \lim_{N\to\infty}\frac{1}{N}\sum_{n=1}^{N}f(T^{\lfloor \alpha n^c \rfloor}x)g(T^{\lfloor \beta n^c \rfloor}x)
	\\ & \hspace{2cm} =
	\frac{1}{p}\sum_{r=0}^{q-1}\int_{p(r+1)/q}^{pr/q}\lim_{N\to\infty}\frac{1}{N}\sum_{n=0}^{N-1}(T^r g)(T^{qn}x)(T^{\lfloor t\rfloor}f)(T^{pn}x)dt.
	\end{align*}
	
	The proof is complete.
\end{proof}
The above proposition and \cite[Theorem 21.6]{HK-book} imply that if $$\E_{\mu}(f|\mathcal{Z}_{2}(T))=0\quad \text{or}\quad\E_{\mu}(g|\mathcal{Z}_{2}(T))=0,$$ then the limit function equals to zero almost everywhere.
\section{Proof of Theorem \ref{TB}: Irrational case}\label{section4}
In this section, we prove that \eqref{eq2-24} holds and determine the associated characteristic factors of the ergodic averages in \eqref{eq1-3}.
\subsection{Proving that \eqref{eq2-24} holds}
Fix $k\in \N$ with $k>10^9$. To begin with, we construct a suitable approximation for $E_{k,N}^{(\text{ir})}(\tilde{f},\tilde{g})$. Before this, we introduce a number theory result.
\begin{thm}\label{thm3-1}
	$($\cite[Equation (3.13)]{EW11}$)$ Fix $\alpha\in \R\backslash \Q$. Then there is a strictly increasing sequence $\{q_n\}_{n\ge 1}$ in $\N$ such that for each $n\ge 1$, there is non-zero $p_n\in \Z$ with $\text{gcd}(p_n,q_n)=1$ such that 
	$$\left|\alpha-\frac{p_n}{q_n}\right|\le \frac{1}{q_nq_{n+1}}.$$
\end{thm}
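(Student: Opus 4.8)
The plan is to recover this classical fact directly from the continued fraction algorithm for $\alpha$; the only genuine work is the bookkeeping needed to land on exactly the stated form (strictly increasing $q_n$, coprime and nonzero $p_n$, and the sharp product bound $1/(q_nq_{n+1})$ rather than the weaker $1/q_n^2$). First I would run the Gauss map: set $\alpha_0=\alpha$, $a_j=\lfloor\alpha_j\rfloor$, $\alpha_{j+1}=1/(\alpha_j-a_j)$. Because $\alpha\in\R\backslash\Q$, the difference $\alpha_j-a_j$ always lies in $(0,1)$ and never vanishes, so each $\alpha_j$ is a well-defined irrational number, $\alpha_j>1$ for $j\ge 1$, and all partial quotients $a_1,a_2,\dots$ are positive integers. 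Then I would introduce the convergents $p_j/q_j$ via the usual recurrences $p_{-1}=1$, $p_0=a_0$, $q_{-1}=0$, $q_0=1$, and $p_j=a_jp_{j-1}+p_{j-2}$, $q_j=a_jq_{j-1}+q_{j-2}$ for $j\ge 1$.

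Next I would prove the three classical facts by induction on $j$. The determinant identity $p_jq_{j-1}-p_{j-1}q_j=(-1)^{j-1}$ propagates immediately through the recurrences, and it forces $\gcd(p_j,q_j)=1$. For monotonicity, since $a_j\ge 1$ for $j\ge 1$ one has $q_j=a_jq_{j-1}+q_{j-2}\ge q_{j-1}+q_{j-2}$, and as $q_{j-2}\ge 1$ for $j\ge 2$ this yields $q_1<q_2<q_3<\cdots$, so $(q_j)_{j\ge 1}$ is already strictly increasing. For the approximation bound I would use the exact identity $\alpha=(\alpha_{j+1}p_j+p_{j-1})/(\alpha_{j+1}q_j+q_{j-1})$, itself an easy induction from the definition of $\alpha_{j+1}$ together with $\alpha_j=a_j+1/\alpha_{j+1}$; subtracting $p_j/q_j$ and inserting the determinant identity gives
$$\alpha-\frac{p_j}{q_j}=\frac{p_{j-1}q_j-p_jq_{j-1}}{q_j(\alpha_{j+1}q_j+q_{j-1})}=\frac{(-1)^{j}}{q_j(\alpha_{j+1}q_j+q_{j-1})}.$$
Since $\alpha_{j+1}=a_{j+1}+1/\alpha_{j+2}>a_{j+1}$, we get $\alpha_{j+1}q_j+q_{j-1}>a_{j+1}q_j+q_{j-1}=q_{j+1}$, hence $|\alpha-p_j/q_j|<1/(q_jq_{j+1})$, which is a fortiori the claimed inequality.

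Finally I would tidy up the edge cases. Coprimality and the product bound hold for every $j\ge 1$ and $(q_j)_{j\ge 1}$ is already strictly increasing, so the only remaining point is that the numerators be nonzero. But $p_j/q_j\to\alpha\neq 0$ (here $\alpha\neq 0$ precisely because it is irrational), so $p_j\neq 0$ for all $j\ge j_0$ for some $j_0$; passing to the consecutive tail $j\ge j_0$ keeps the indices adjacent, so the bound $|\alpha-p_j/q_j|<1/(q_jq_{j+1})$ is preserved verbatim, and relabelling $(p_{j_0},q_{j_0}),(p_{j_0+1},q_{j_0+1}),\dots$ as $(p_1,q_1),(p_2,q_2),\dots$ produces exactly the asserted sequence. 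I do not expect a real obstacle anywhere. The single point that requires a little care is this last step: one must pass to a \emph{consecutive} tail rather than an arbitrary subsequence, because thinning the index set enlarges the product $q_jq_{j+1}$ in the denominator and would thus demand a sharper inequality than the convergents are guaranteed to satisfy. (A continued-fraction-free argument via Dirichlet's pigeonhole principle would deliver the easier estimate $1/q_j^2$, but not the product form $1/(q_jq_{j+1})$ without essentially re-deriving the best-approximation property of convergents, so the continued fraction route is the natural one.)
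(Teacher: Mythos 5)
The paper offers no proof of this statement, citing it directly as \cite[Equation (3.13)]{EW11}, and your continued-fraction derivation is precisely the standard argument behind that citation: the determinant identity gives coprimality, the recursion $q_j=a_jq_{j-1}+q_{j-2}$ gives strict monotonicity, and the exact formula $\alpha-p_j/q_j=(-1)^j/\bigl(q_j(\alpha_{j+1}q_j+q_{j-1})\bigr)$ together with $\alpha_{j+1}>a_{j+1}$ gives the product bound. Your proof is correct, including the careful point that one must pass to a \emph{consecutive} tail to discard the at most finitely many zero numerators without disturbing the $1/(q_jq_{j+1})$ estimate.
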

By Theorem \ref{thm3-1}, there is $N_{0}=N_{0}(\gamma)\in \N$ such that for every $N\in \N$ with $N>N_0$, there is $(P_N,Q_N)\in \Z\times \N$ with $\text{gcd}(P_N,Q_N)=1$ such that 
\begin{equation}\label{eq2-30}
	\left|\gamma-\frac{P_N}{Q_N}\right|<\frac{1}{N}.
\end{equation}
Fix $N\in \N$ with $N>10^9N_0$. Now, we prove the following estimate:
\begin{equation}\label{eq2-28}
\begin{split}
& \int_{Y}\limsup_{N\to\infty}\Big|E_{k,N}^{(\text{ir})}(\tilde{f},\tilde{g})(y)
\\ & \hspace{1cm} 
	-\frac{1}{|\Lambda_k\cap [N]|}\sum_{n\in [N]}\left(\Phi(-\psi_k(n+1))-\Phi(-\psi_k(n))\right)\tilde{f}(S_k^{ nP_N/Q_N}y)\tilde{g}(S_k^{n}y)
	\Big|d\nu(y)\lesssim \frac{1}{k},
\end{split}
\end{equation}
\begin{proof}[Establishment of the estimate in \eqref{eq2-28}]
	\begin{align*}
		& \int_{Y}\limsup_{N\to\infty}\Big|E_{k,N}^{(\text{ir})}(\tilde{f},\tilde{g})(y)
		\\ & \hspace{1cm} 
		-\frac{1}{|\Lambda_k\cap [N]|}\sum_{n\in [N]}\left(\Phi(-\psi_k(n+1))-\Phi(-\psi_k(n))\right)\tilde{f}(S_k^{ nP_N/Q_N}y)\tilde{g}(S_k^{n}y)
		\Big|d\nu(y)
		\\ \le &
		 \int_{Y}\limsup_{N\to\infty}\Big|\frac{1}{|\Lambda_k\cap [N]|}\sum_{n\in [N]}1_{\Lambda_k}(n)\tilde{f}(S_k^{\lfloor \gamma n\rfloor}y)\tilde{g}(S_k^{n}y)
		 	\\ & \hspace{1cm} -
		 	\frac{1}{|\Lambda_k\cap [N]|}\sum_{n\in [N]}1_{\Lambda_k}(n)\tilde{f}(S_k^{ nP_N/Q_N}y)\tilde{g}(S_k^{n}y)\Big|d\nu(y)
		 	\\ & \hspace{1.5cm} +
		  \int_{Y}\limsup_{N\to\infty}\Big|\frac{1}{|\Lambda_k\cap [N]|}\sum_{n\in [N]}\left(\psi_{k}(n+1)-\psi_{k}(n)\right)\tilde{f}(S_k^{\lfloor \gamma n\rfloor}y)\tilde{g}(S_k^{n}y)
		  	\\ & \hspace{2cm} -
		  	\frac{1}{|\Lambda_k\cap [N]|}\sum_{n\in [N]}\left(\psi_{k}(n+1)-\psi_{k}(n)\right)\tilde{f}(S_k^{ nP_N/Q_N}y)\tilde{g}(S_k^{n}y)\Big|d\nu(y)
		\\ \lesssim &
		\frac{1}{k}+\int_{X}\int_{\frac{2}{k}}^{1-\frac{2}{k}}\limsup_{N\to\infty}\Big|\frac{1}{|\Lambda_k\cap [N]|}\sum_{n\in [N]}1_{\Lambda_k}(n)\tilde{f}(S_k^{\lfloor \gamma n\rfloor}(x,s))\tilde{g}(S_k^{n}(x,s))
		\\ & \hspace{1cm} -
		\frac{1}{|\Lambda_k\cap [N]|}\sum_{n\in [N]}1_{\Lambda_k}(n)\tilde{f}(S_k^{ nP_N/Q_N}(x,s))\tilde{g}(S_k^{n}(x,s))\Big|d\mu(x)ds
		\\ & \hspace{1cm} +\int_{X}\int_{\frac{2}{k}}^{1-\frac{2}{k}}\limsup_{N\to\infty}\Big|\frac{1}{|\Lambda_k\cap [N]|}\sum_{n\in [N]}\left(\psi_{k}(n+1)-\psi_{k}(n)\right)\tilde{f}(S_k^{\lfloor \gamma n\rfloor}(x,s))\tilde{g}(S_k^{n}(x,s))
		\\ & \hspace{1cm} -
		\frac{1}{|\Lambda_k\cap [N]|}\sum_{n\in [N]}\left(\psi_{k}(n+1)-\psi_{k}(n)\right)\tilde{f}(S_k^{ nP_N/Q_N}(x,s))\tilde{g}(S_k^{n}(x,s))\Big|d\mu(x)ds
		\\ \lesssim &
		\frac{1}{k}+\int_{\frac{2}{k}}^{1-\frac{2}{k}}\Big(\limsup_{N\to\infty}\frac{1}{|\Lambda_k\cap [N]|}\sum_{n\in [N]}1_{\Lambda_k}(n)1_{I_k(s)}(\{\alpha n/L_k\})
		\\ & \hspace{1cm} +
		\limsup_{N\to\infty}\frac{1}{|\Lambda_k\cap [N]|}\sum_{n\in [N]}\left(\psi_{k}(n+1)-\psi_{k}(n)\right)1_{I_k(s)}(\{\alpha n/L_k\})\Big)ds\ (\text{by \eqref{eq2-30}})
			\\ \lesssim &
			\frac{1}{k}+\int_{\frac{2}{k}}^{1-\frac{2}{k}}\limsup_{N\to\infty}\frac{1}{N}\sum_{n\in [N]}1_{I_k(s)}(\{(\alpha /L_k) \lfloor L_k n^c\rfloor\} )ds
				\\ & \hspace{1cm} +
			\int_{\frac{2}{k}}^{1-\frac{2}{k}}\lim_{N\to\infty}\frac{1}{N}\sum_{n\in [N]}1_{I_k(s)}(\{\alpha n/L_k\})ds\ (\text{by \cite[Equation (2.11)]{Daskalakis2025}})
		\\ \lesssim &
		\frac{1}{k}.\ (\text{by Theorem \ref{thm2-1}\ and Lemma \ref{weyl}})
	\end{align*}
\end{proof}
We have now constructed a suitable approximation for  $E_{k,N}^{(\text{ir})}(\tilde{f},\tilde{g})$. Next, we verify that for $\nu$-a.e. $y\in Y$,
\begin{equation}\label{eq2-29}
	\lim_{m\to\infty}\frac{1}{|\Lambda_k\cap [\lfloor \lambda^m\rfloor]|}\sum_{n=1}^{\lfloor \lambda^m\rfloor}\left(\Phi(-\psi_k(n+1))-\Phi(-\psi_k(n))\right)\tilde{f}(S_k^{ nP_N/Q_N}y)\tilde{g}(S_k^{n}y)=0.
\end{equation}
This will complete the establishment of the estimate in \eqref{eq2-24}.

Fix $N\in \N$ with $N>10^9N_0$. To begin with, let us introduce some certain parameters:
$$\epsilon_0=\frac{23-22c}{40c},\sigma_0=1+\epsilon_0-\frac{1}{c},M=\lfloor N^{\sigma_0}\rfloor.$$ Using the parameter $M=M(N)$, we define the following weighted ergodic averages:
$$W_{k,N}(y):=\frac{1}{|\Lambda_k\cap [N]|}\sum_{n=1}^{N}\left(\sum_{0<|m|\le M}\frac{e(m\psi_{k}(n+1))-e(m\psi_{k}(n))}{2\pi im}\right)\tilde{f}(S_{k,N}^{ nP_N}y)\tilde{g}(S_{k,N}^{nQ_N}y),$$ where $S_{k,N}=S_k^{\frac{1}{Q_N}}$.
Afer analyzing Daskalakis's arguments in \cite[Proof of Proposition 2.24]{Daskalakis2025}, we have that to prove that the limit in \eqref{eq2-29} is zero almost everywhere, it remains to prove that there is a sufficiently small $\delta=\delta(k,c)>0$ such that 
\begin{equation}\label{eq2-31}
	\norm{W_{k,N}}_{L^{1}(\nu)}\lesssim_{k,c} N^{-\delta}.
\end{equation}

Set $$K_{N}(n)=\frac{1_{[N]}(n)}{|\Lambda_k\cap [N]|}\left(\sum_{0<|m|\le M}\frac{e(m\psi_{k}(n+1))-e(m\psi_{k}(n))}{2\pi im}\right)$$ and split $[N]$ into the following parts:
	$$\{1\},\{2,3\},\ldots,\{2^{d},\ldots,N\},$$ where $d=\lfloor \log_{2}(N)\rfloor$. Then we have the following:
	\begin{align*}
		& \frac{1}{|\Lambda_k\cap [N]|}\sum_{n=1}^{N}\left(\sum_{0<|m|\le M}\frac{e(m\psi_{k}(n+1))-e(m\psi_{k}(n))}{2\pi im}\right)\tilde{f}(S_{k,N}^{ nP_N}y)\tilde{g}(S_{k,N}^{nQ_N}y)
		\\ & \hspace{7cm}= 
		\sum_{j=0}^{d}\sum_{n=1}^{N}\tilde{f}(S_{k,N}^{ nP_N}y)\tilde{g}(S_{k,N}^{nQ_N}y)\cdot K_{N,j}(n),
	\end{align*} where $$K_{N,j}(n)=1_{[2^j,\min(2^{j+1},N+1))}(n)K_{N}(n).$$
By duality, we have that for each $0\le j\le d$, there is $1$-bounded $l_j\in L^{\infty}(\mu)$ such that 
\begin{equation}\label{eq2-32}
	\norm{\sum_{n=1}^{N}\tilde{f}(S_{k,N}^{ nP_N}y)\tilde{g}(S_{k,N}^{nQ_N}y)\cdot K_{N,j}(n)}_{L^{1}(\nu)}=\int_{Y}l_{j}( y)\sum_{n=1}^{N}\tilde{f}(S_{k,N}^{ nP_N}y)\tilde{g}(S_{k,N}^{nQ_N}y)\cdot K_{N,j}(n)d\nu(y).
\end{equation}

Next, we establish the estimate in \eqref{eq2-31}. Before it, we need a variant of van der Corput lemma.
\begin{lemma}\label{lem4-1}
	$($\cite[Lemma 3.1]{Prendiville2017}$)$ Given $g:\Z\to \C$ with finite support $S$. Then for any finite $H\subset \Z$, we have that 
	$$\left|\sum_{y}g(y)\right|^2\le \frac{|S-H|}{|H|^2}\sum_{h}r_{H}(h)\sum_{y}g(y+h)\overline{g(y)},$$
	where $r_{H}(h)=|\{(h_1,h_2)\in H^2:h_1-h_2=h\}|$.
\end{lemma}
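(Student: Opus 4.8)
The plan is to run the classical van der Corput (Weyl) differencing argument; I record the three moves so that the normalising constants come out exactly as stated. First I would use translation invariance: since $g$ has finite support, for every $h\in\Z$ the substitution $y\mapsto y+h$ is a bijection of $\Z$, so $\sum_{y}g(y+h)=\sum_{y}g(y)$. Averaging this identity over $h\in H$ gives
\[
|H|\sum_{y}g(y)=\sum_{h\in H}\sum_{y}g(y+h)=\sum_{y}G(y),\qquad\text{where }G(y):=\sum_{h\in H}g(y+h).
\]

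Next I would identify the support of $G$: if $G(y)\neq 0$ then $y+h\in S$ for some $h\in H$, i.e.\ $y\in S-H:=\{s-h:\ s\in S,\ h\in H\}$, a set of cardinality $|S-H|$. Applying the Cauchy--Schwarz inequality to $\sum_{y\in S-H}G(y)$ thus yields $|H|^{2}\big|\sum_{y}g(y)\big|^{2}\le |S-H|\sum_{y}|G(y)|^{2}$. Finally I would expand the remaining square and re-index: $\sum_{y}|G(y)|^{2}=\sum_{h_{1},h_{2}\in H}\sum_{y}g(y+h_{1})\overline{g(y+h_{2})}$, and for each pair $(h_{1},h_{2})$ the shift $y\mapsto y-h_{2}$ rewrites the inner sum as $\sum_{y}g(y+(h_{1}-h_{2}))\overline{g(y)}$, which depends on $(h_{1},h_{2})$ only through $h:=h_{1}-h_{2}$; grouping pairs by the value of $h$ produces the weight $r_{H}(h)$, so that $\sum_{y}|G(y)|^{2}=\sum_{h}r_{H}(h)\sum_{y}g(y+h)\overline{g(y)}$. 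Dividing the resulting inequality by $|H|^{2}$ gives the claim.

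There is no genuine obstacle here; the only two points needing a moment's care are (i) correctly identifying the support of the averaged function $G$ as $S-H$, so that Cauchy--Schwarz produces the factor $|S-H|$ rather than a cruder overestimate, and (ii) keeping track of the $|H|$-normalisation so that precisely one power of $|H|^{2}$ survives in the denominator.
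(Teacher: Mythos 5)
Your argument is correct and is precisely the standard van der Corput/Cauchy--Schwarz differencing proof; the paper itself gives no proof, simply citing \cite[Lemma 3.1]{Prendiville2017}, and your three steps (averaging $\sum_y g(y+h)$ over $h\in H$, Cauchy--Schwarz on the support $S-H$ of $G(y)=\sum_{h\in H}g(y+h)$, and re-indexing the expanded square by $h=h_1-h_2$ to produce $r_H(h)$) reproduce exactly the argument behind that citation with the constants $|S-H|/|H|^2$ coming out as stated.
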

\begin{proof}[Establishment of the estimate in \eqref{eq2-31}]
	Fix $0\le j\le d$. Now, we do the following calculations:
	\begin{align*}
		& \left(\int_{Y}\sum_{m=1}^{N}\sum_{n=1}^{N}l_{j}( S_{k,N}^{ m}y)\tilde{f}(S_{k,N}^{ m+nP_N}y)\tilde{g}(S_{k,N}^{m+nQ_N}y)\cdot K_{N,j}(n)d\nu(y)\right)^{2}
		\\ \le &
		\int_{Y}\left|\sum_{m=1}^{N}\sum_{n=1}^{N}l_{j}( S_{k,N}^{ m}y)\tilde{f}(S_{k,N}^{ m+nP_N}y)\tilde{g}(S_{k,N}^{m+nQ_N}y)\cdot K_{N,j}(n)\right|^{2}d\nu(y)
		\\ \le &
		\int_{Y}\left(\sum_{m=1}^{N}\left|l_{j}( S_{k,N}^{ m}y)\right|^{2}\right)\left(\sum_{m=1}^{N}\left|\sum_{n=1}^{N}\tilde{f}(S_{k,N}^{ m+nP_N}y)\tilde{g}(S_{k,N}^{m+nQ_N}y)\cdot K_{N,j}(n)\right|^{2}\right)d\nu(y)
		\\ \lesssim &
		\int_{Y}N\sum_{m=1}^{N}N\sum_{|h_1|\le N}\mu_{N}(h_1)\sum_{n\in J(h_1)}K_{N,j}(n)\overline{K_{N,j}(n+h_1)}\tilde{f}(S_{k,N}^{ m+nP_N}y)\overline{\tilde{f}(S_{k,N}^{ m+(n+h_1)P_N}y)}\cdot
		\\ & \hspace{0.5cm}
		\tilde{g}(S_{k,N}^{m+nQ_N}y)\overline{\tilde{g}(S_{k,N}^{m+(n+h_1)Q_N}y)}d\nu(y)\ (\text{by Lemma \ref{lem4-1};}\ J(h_1)=[N]\cap ([N]-h_1))
		\\ = &
		\int_{Y}N^{2}\sum_{|h_1|\le N}\mu_{N}(h_1)\sum_{n\in J(h_1)}K_{N,j}(n)\overline{K_{N,j}(n+h_1)}\sum_{m=1}^{N}\tilde{f}(S_{k,N}^{P_N(mQ_N)}y)\overline{\tilde{f}(S_{k,N}^{ P_N(h_1+mQ_N)}y)}\cdot 
		\\ & \hspace{0.5cm}
			\tilde{g}(S_{k,N}^{Q_N(n+mP_N)-nP_N}y)\overline{\tilde{g}(S_{k,N}^{Q_N(n+h_1+mP_N)-nP_N}y)}d\nu(y).\ (\text{by}\ \nu\text{-invariance})
	\end{align*}
	
	Based on the above calculations, we have the following:
	\begin{align*}
		& \left(\int_{Y}\sum_{m=1}^{N}\sum_{n=1}^{N}l_{j}( S_{k,N}^{ m}y)\tilde{f}(S_{k,N}^{ m+nP_N}y)\tilde{g}(S_{k,N}^{m+nQ_N}y)\cdot K_{N,j}(n)d\nu(y)\right)^{4}
		\\ \lesssim &
		\int_{Y}\Big|N^{2}\sum_{|h_1|\le N}\mu_{N}(h_1)\sum_{n\in J(h_1)}K_{N,j}(n)\overline{K_{N,j}(n+h_1)}\sum_{m=1}^{N}\tilde{f}(S_{k,N}^{P_N(mQ_N)}y)\overline{\tilde{f}(S_{k,N}^{ P_N(h_1+mQ_N)}y)}\cdot 
		\\ & \hspace{0.5cm}
		\tilde{g}(S_{k,N}^{Q_N(n+mP_N)-nP_N}y)\overline{\tilde{g}(S_{k,N}^{Q_N(n+h_1+mP_N)-nP_N}y)}\Big|^{2}d\nu(y)
		\\ \le &
		N^{4}\int_{Y}\left(\sum_{|h_1|\le N}|\mu_{N}(h_1)|^{2}\right)\Big(\sum_{|h_1|\le N}\Big|\sum_{m=1}^{N}\sum_{n\in J(h_1)}K_{N,j}(n)\overline{K_{N,j}(n+h_1)}\tilde{f}(S_{k,N}^{P_N(mQ_N)}y)\cdot
		\\ & \hspace{0.5cm}
		\overline{\tilde{f}(S_{k,N}^{ P_N(h_1+mQ_N)}y)}\tilde{g}(S_{k,N}^{Q_N(n+mP_N)-nP_N}y)\overline{\tilde{g}(S_{k,N}^{Q_N(n+h_1+mP_N)-nP_N}y)}
		\Big|^{2}\Big)d\nu(y)
		\\ \lesssim &
		N^{3}\int_{Y}\sum_{|h_1|\le N}\left(\sum_{m=1}^{N}\left|\tilde{f}(S_{k,N}^{P_N(mQ_N)}y)\overline{\tilde{f}(S_{k,N}^{ P_N(h_1+mQ_N)}y)}\right|^{2}\right)\Big(\sum_{m=1}^{N}\Big|\sum_{n\in J(h_1)}\Delta_{h_1}K_{N,j}(n)\cdot
		\\ & \hspace{0.5cm}
		\tilde{g}(S_{k,N}^{Q_N(n+mP_N)-nP_N}y)\overline{\tilde{g}(S_{k,N}^{Q_N(n+h_1+mP_N)-nP_N}y)}
		\Big|^{2}\Big)d\nu(y)
		\\ \lesssim &
		N^{4}\sum_{|h_1|\le N}\sum_{m=1}^{N}\int_{Y}N\sum_{|h_2|\le N}\mu_{N}(h_2)\sum_{n\in J(h_1,h_2)}\Delta_{h_1,h_2}K_{N,j}(n)\tilde{g}(S_{k,N}^{Q_N(n+mP_N)-nP_N}y)\cdot
		\\ & \hspace{0.5cm}
		\overline{\tilde{g}(S_{k,N}^{Q_N(n+h_1+mP_N)-nP_N}y)}\cdot\overline{\tilde{g}(S_{k,N}^{Q_N(mP_N)+(n+h_2)(Q_N-P_N)}y)}\cdot
			\\ & \hspace{1cm}
		\tilde{g}(S_{k,N}^{Q_N(h_1+mP_N)+(n+h_2)(Q_N-P_N)}y)d\nu(y)
		\\ & \hspace{5cm}
		(\text{by Lemma \ref{lem4-1};}\ J(h_1,h_2)=J(h_1)\cap (J(h_1)-h_2))
		\\ = &
		N^{6}\sum_{|h_1|\le N}\sum_{|h_2|\le N}\mu_{N}(h_2)\sum_{n\in J(h_1,h_2)}\Delta_{h_1,h_2}K_{N,j}(n)\int_{Y}\tilde{g}(y)\overline{\tilde{g}(S_{k,N}^{h_1 Q_N}y)}\cdot \overline{\tilde{g}(S_{k,N}^{h_2(Q_N-P_N)}y)}\cdot
			\\ & \hspace{0.5cm}
			\tilde{g}(S_{k,N}^{h_1Q_N+h_2(Q_N-P_N)}y)d\nu(y).\ (\text{by}\ \nu\text{-invariance})
	\end{align*}
	
	Let $$I(h_1,h_2)=\int_{Y}\tilde{g}(y)\overline{\tilde{g}(S_{k,N}^{h_1 Q_N}y)}\cdot \overline{\tilde{g}(S_{k,N}^{h_2(Q_N-P_N)}y)}\cdot
	\tilde{g}(S_{k,N}^{h_1Q_N+h_2(Q_N-P_N)}y)d\nu(y).$$ Then $|I(h_1,h_2)|\le 1$.
	Next, we use the above calculations to derive the following:
	\begin{align*}
		& \left(\int_{Y}\sum_{m=1}^{N}\sum_{n=1}^{N}l_{j}( S_{k,N}^{ m}y)\tilde{f}(S_{k,N}^{ m+nP_N}y)\tilde{g}(S_{k,N}^{m+nQ_N}y)\cdot K_{N,j}(n)d\nu(y)\right)^{8}
		\\ \lesssim &
		\left(N^{6}\sum_{|h_1|\le N}\sum_{|h_2|\le N}\mu_{N}(h_2)I(h_1,h_2)\sum_{n\in J(h_1,h_2)}\Delta_{h_1,h_2}K_{N,j}(n)\right)^{2}
		\\ \le &
		N^{12}\left(\sum_{|h_1|\le N}\sum_{|h_2|\le N}\left|\mu_{N}(h_2)I(h_1,h_2)\right|^{2}\right)\left(\sum_{|h_1|\le N}\sum_{|h_2|\le N}\left|\sum_{n\in J(h_1,h_2)}\Delta_{h_1,h_2}K_{N,j}(n)\right|^{2}\right)
		\\ \lesssim &
		N^{12}\sum_{|h_1|\le N}\sum_{|h_2|\le N}N\sum_{|h_3|\le N}\mu_{N}(h_3)\sum_{n\in J(h_1,h_2,h_3)}\Delta_{h_1,h_2,h_3}K_{N,j}(n).
			\\ & \hspace{3.5cm}
			(\text{by Lemma \ref{lem4-1};}\ J(h_1,h_2,h_3)=J(h_1,h_2)\cap (J(h_1,h_2)-h_3))
		\\ = &
		N^{13}\sum_{|h_1|,|h_2|\le N}\sum_{h_3\in \Z}\mu_{N}(h_3)\sum_{n\in \Z}\Delta_{h_1,h_2,h_3}K_{N,j}(n).
	\end{align*}
	
	Combining the above estimate and the triangle inequality, we have that 
	\begin{align*}
		& \norm{W_{k,N}}_{L^{1}(\nu)}\le \sum_{j=0}^{d}\norm{\sum_{n=1}^{N}\tilde{f}(S_{k,N}^{ nP_N}y)\tilde{g}(S_{k,N}^{nQ_N}y)\cdot K_{N,j}(n)}_{L^{1}(\nu)}
		\\ = &
		\sum_{j=0}^{d}\frac{1}{N}\int_{Y}\sum_{m=1}^{N}\sum_{n=1}^{N}l_{j}( S_{k,N}^{ m}y)\tilde{f}(S_{k,N}^{ m+nP_N}y)\tilde{g}(S_{k,N}^{m+nQ_N}y)\cdot K_{N,j}(n)d\nu(y)\ (\text{by \eqref{eq2-32}})
		\\ \lesssim &
		\frac{1}{N}\sum_{j=0}^{d}\left(N^{13}\sum_{|h_1|,|h_2|\le N}\sum_{h_3\in \Z}\mu_{N}(h_3)\sum_{n\in \Z}\Delta_{h_1,h_2,h_3}K_{N,j}(n)\right)^{1/8}.
	\end{align*}
	
	Combining the above estimate and the arguments in \cite[Proof of Proposition 3.1]{Daskalakis2025}, we have that there is a sufficiently small $\delta=\delta(k,c)>0$ such that 
	$$\norm{W_{k,N}}_{L^{1}(\nu)}\lesssim_{k,c}N^{-\delta}.$$
\end{proof}
\subsection{Determining characteristic factor}
Fix $k\in \N$ with $k>10^9$. From \eqref{eq2-17}, \eqref{eq2-19}, \eqref{eq2-28}, and \eqref{eq2-29}, we obtain
\begin{equation}\label{eq4-1}
		\begin{split}
		& 	\int_{Y}\limsup_{N\to\infty}\Big|\frac{1}{N}\sum_{n=1}^{N}\tilde{f}(S^{\alpha n^c}(x,s))\tilde{g}(S^{\beta n^c}(x,s))
		\\ & \hspace{2cm} -
		\frac{1}{N}\sum_{n=1}^{N}\tilde{f}(S^{\frac{\beta}{L_k}\lfloor \gamma n\rfloor}(x,s))\tilde{g}(S^{\frac{\beta}{L_k}n}(x,s))\Big|d\nu(x,s)\lesssim \frac{1}{k}.
		\end{split}
\end{equation}
Next, we build the following estimate:
\begin{equation}\label{eq4-2}
	\begin{split}
	& 	\int_{Y}\limsup_{N\to\infty}\Big|\frac{1}{N}\sum_{n=1}^{N}\tilde{f}(S^{\frac{\beta}{L_k}\lfloor \gamma n\rfloor}(x,s))\tilde{g}(S^{\frac{\beta}{L_k}n}(x,s))
	\\ & \hspace{2cm} -
	\frac{1}{N}\int_{0}^{N}\tilde{f}(S^{\frac{\beta}{L_k}\gamma t}(x,s))\tilde{g}(S^{\frac{\beta}{L_k}t}(x,s))dt
	\Big|d\nu(x,s)\lesssim \frac{1}{k}.
	\end{split}
\end{equation}
\begin{proof}[Establishment of the estimate in \eqref{eq4-2}]
	\begin{align*}
		& \int_{Y}\limsup_{N\to\infty}\Big|\frac{1}{N}\sum_{n=0}^{N-1}\tilde{f}(S^{\frac{\beta}{L_k}\lfloor \gamma n\rfloor}(x,s))\tilde{g}(S^{\frac{\beta}{L_k}n}(x,s))
		\\ & \hspace{2cm} -
		\frac{1}{N}\int_{0}^{N}\tilde{f}(S^{\frac{\beta}{L_k}\gamma t}(x,s))\tilde{g}(S^{\frac{\beta}{L_k}t}(x,s))dt
		\Big|d\nu(x,s)
		\\ = &
		\int_{Y}\limsup_{N\to\infty}\Big|\frac{1}{N}\sum_{n=0}^{N-1}\tilde{f}(S^{\frac{\beta}{L_k}\lfloor \gamma n\rfloor}(x,s))\tilde{g}(S^{\frac{\beta}{L_k}n}(x,s))
		\\ & \hspace{2cm} -
		\frac{1}{N}\sum_{n=0}^{N-1}\int_{0}^{1}\tilde{f}(S^{\frac{\beta}{L_k}\gamma (n+t)}(x,s))\tilde{g}(S^{\frac{\beta}{L_k}(n+t)}(x,s))dt
		\Big|d\nu(x,s)
		\\ \lesssim &
		\frac{1}{k}+\int_{X}\int_{\frac{2}{k}}^{1-\frac{2}{k}}\limsup_{N\to\infty}\frac{1}{N}\sum_{n=0}^{N-1}\int_{0}^{1}\Big|\tilde{f}(S^{\frac{\beta}{L_k}\lfloor \gamma n\rfloor}(x,s))\tilde{g}(S^{\frac{\beta}{L_k}n}(x,s))
		\\ & \hspace{2cm}
		-\tilde{f}(S^{\frac{\beta}{L_k}\gamma (n+t)}(x,s))\tilde{g}(S^{\frac{\beta}{L_k}(n+t)}(x,s))\Big|dtd\mu(x)ds 
		\\ \lesssim &
			\frac{1}{k}+\int_{\frac{2}{k}}^{1-\frac{2}{k}}\limsup_{N\to\infty}\Big(\frac{|\{1\le n\le N:\{\alpha n/L_k\}\in I_{k}(s)\}|}{N}
			\\ & \hspace{1cm}
			+\frac{|\{1\le n\le N:\{\beta n/L_k\}\in I_{k}(s)\}|}{N}\Big)ds\ (\text{by the choice of}\ L_k)
			\\ \lesssim &
			\frac{1}{k}.\ (\text{by Lemma \ref{weyl}})
	\end{align*}
\end{proof}

Next, we prove the following proposition.
\begin{prop}\label{prop4-1}
	For $\mu$-a.e. $x\in X$,
	$$\lim_{N\to\infty}\frac{1}{N}\sum_{n=1}^{N}f(T^{\lfloor \alpha n^c \rfloor}x)g(T^{\lfloor \beta n^c \rfloor}x)=	\int_{0}^{1}\lim_{N\to\infty}\frac{1}{N}\sum_{n=0}^{N-1}g(T^n x)f(T^{\lfloor\gamma (n+t)\rfloor}x)dt.$$
\end{prop}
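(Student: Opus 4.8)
The plan is to follow the template of the proof of Proposition \ref{prop3-1}, the one new ingredient being the variant of Krause's theorem established in Appendix \ref{appendixC}: for $a\in\Z$, $b,d\in\R$ and $\mu$-a.e.\ $x$, the limit $\lim_{N\to\infty}\frac1N\sum_{n=0}^{N-1}g(T^{an}x)f(T^{\lfloor bn+d\rfloor}x)$ exists. I assume $\beta>0$ throughout (the case $\beta<0$ being handled by passing to $T^{-1}$). Let $G(x)$ denote the $\mu$-a.e.\ limit of $\frac1N\sum_{n=1}^N f(T^{\lfloor\alpha n^c\rfloor}x)g(T^{\lfloor\beta n^c\rfloor}x)$ already established in this section, and let $F(x,s)$ denote the limit of $\frac1N\sum_{n=1}^N\tilde f(S^{\alpha n^c}(x,s))\tilde g(S^{\beta n^c}(x,s))$, which exists for $\nu$-a.e.\ $(x,s)$ because \eqref{eq2-17}, \eqref{eq2-18} together with the usual slowly-oscillating/subsequence argument over $\lambda\in\{2^{1/n}\}$ make these averages $\nu$-a.e.\ Cauchy.

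First I would combine \eqref{eq4-1} and \eqref{eq4-2} to obtain, for every $k>10^9$,
\[
\int_Y\limsup_{N\to\infty}\Big|\frac1N\sum_{n=1}^N\tilde f(S^{\alpha n^c}(x,s))\tilde g(S^{\beta n^c}(x,s))-\frac1N\int_0^N\tilde f(S^{\frac{\beta}{L_k}\gamma t}(x,s))\tilde g(S^{\frac{\beta}{L_k}t}(x,s))\,dt\Big|\,d\nu\lesssim\frac1k,
\]
and then substitute $u=\frac{\beta}{L_k}t$, so that the inner integral becomes $\frac1M\int_0^M\tilde f(S^{\gamma u}(x,s))\tilde g(S^u(x,s))\,du$ with $M=\frac{\beta}{L_k}N\to\infty$, an expression independent of $k$. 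Since the first average converges a.e.\ to $F(x,s)$, letting $k\to\infty$ forces $\frac1M\int_0^M\tilde f(S^{\gamma u}(x,s))\tilde g(S^u(x,s))\,du$ to converge a.e., necessarily to $F(x,s)$; write $H(x,s)$ for this limit, so $H=F$ $\nu$-a.e. Next I would analyse $H(x,0):=\lim_{M\to\infty}\frac1M\int_0^M f(T^{\lfloor\gamma u\rfloor}x)g(T^{\lfloor u\rfloor}x)\,du$: cutting $[0,M)$ into unit intervals and writing $u=n+t$ gives
\[
\frac1M\int_0^M f(T^{\lfloor\gamma u\rfloor}x)g(T^{\lfloor u\rfloor}x)\,du=\frac{\lfloor M\rfloor}{M}\cdot\frac1{\lfloor M\rfloor}\sum_{n=0}^{\lfloor M\rfloor-1}g(T^nx)\int_0^1 f(T^{\lfloor\gamma(n+t)\rfloor}x)\,dt+O(1/M),
\]
and since $\frac1N\sum_{n=0}^{N-1}g(T^nx)f(T^{\lfloor\gamma n+\gamma t\rfloor}x)$ converges for $\mu$-a.e.\ $x$ and every $t$ by Appendix \ref{appendixC} (with $a=1$, $b=\gamma$, $d=\gamma t$), hence for a.e.\ $(x,t)$, dominated convergence and Fubini yield that $H(x,0)$ exists for $\mu$-a.e.\ $x$ and equals $\int_0^1\lim_{N\to\infty}\frac1N\sum_{n=0}^{N-1}g(T^nx)f(T^{\lfloor\gamma(n+t)\rfloor}x)\,dt$, i.e.\ the right-hand side of Proposition \ref{prop4-1}.

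It then remains to pass from $H(\cdot,s)$ back to $H(\cdot,0)$. For $s\in[0,1)$ the difference $\tilde f(S^{\gamma u}(x,s))\tilde g(S^u(x,s))-f(T^{\lfloor\gamma u\rfloor}x)g(T^{\lfloor u\rfloor}x)$ vanishes unless $\{u\}\in[1-s,1)$ or $\{\gamma u\}\in[1-s,1)$, a set of density $\lesssim s$ in $[0,M)$, whence $|H(x,s)-H(x,0)|\lesssim s$ for $\mu$-a.e.\ $x$ wherever $H(x,s)$ is defined. For each $k$ I would pick $s_k\in(0,1/k)$ inside the conull set of parameters $s$ for which $F(\cdot,s)$ exists a.e.\ and $F(\cdot,s)=H(\cdot,s)$ a.e.; then \eqref{eq2-15} (valid for any such $s_k$) gives $|G(x)-F(x,s_k)|\lesssim1/k$ a.e., and combining with $F(x,s_k)=H(x,s_k)$ and $|H(x,s_k)-H(x,0)|\lesssim1/k$ produces $|G(x)-H(x,0)|\lesssim1/k$ a.e.\ for every $k$, so $G(x)=H(x,0)$, which is the assertion.

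The main obstacle I anticipate is exactly this last step: the naive substitution $s=s_k$ in the approximation produces $H(x,s_k)$, which genuinely depends on $s_k$ (shifting the phase inside the floor by a non-integer changes the function), and one recovers the clean, $s$-free formula at $s=0$ only after establishing the Lipschitz-in-$s$ bound and checking that the several conull-in-$s$ conditions can be met simultaneously by one $s_k\in(0,1/k)$. The floor-function book-keeping and the sub-net limits in $M$ in the earlier steps are routine but must be carried out keeping the signs of $\beta$ and $\gamma$ in mind.
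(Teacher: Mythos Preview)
Your approach is essentially the paper's: combine \eqref{eq4-1}--\eqref{eq4-2} to reach the flow identity \eqref{eq4-3}, expand the continuous average $\frac1N\int_0^N$ over unit intervals, strip the $s$-shift by an equidistribution/density bound, and invoke Theorem~\ref{AB} to identify the limit with $\int_0^1\lim_N\frac1N\sum_n g(T^nx)f(T^{\lfloor\gamma(n+t)\rfloor}x)\,dt$. The one substantive difference is that the paper cites \cite[Theorem~2.9]{HSX2023} to pass from the $\beta/L_k$-scaled flow average in \eqref{eq4-2} to the unscaled one in \eqref{eq4-3}, whereas you substitute $u=(\beta/L_k)t$ and use slow oscillation of $M\mapsto\frac1M\int_0^M$ to make the expression $k$-independent and then let $k\to\infty$; this self-contained shortcut is correct, though your reduction of $\beta<0$ to $\beta>0$ via $T\mapsto T^{-1}$ is glib (since $-\lfloor y\rfloor\ne\lfloor-y\rfloor$ in general, and one must still argue that the resulting backward bilinear averages agree pointwise with the forward ones appearing in the statement).
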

\begin{proof}
		Combining \eqref{eq4-1}, \eqref{eq4-2}, and \cite[Theorem 2.9]{HSX2023}, we have that for $\nu$-a.e. $(x,s)\in Y$,
		\begin{equation}\label{eq4-3}
		\lim_{N\to\infty}\frac{1}{N}\sum_{n=1}^{N}\tilde{f}(S^{\alpha n^c}(x,s))\tilde{g}(S^{\beta n^c}(x,s))=\lim_{N\to\infty}\frac{1}{N}\int_{0}^{N}\tilde{f}(S^{\gamma t}(x,s))\tilde{g}(S^{t}(x,s))dt.
		\end{equation}
		
		\eqref{eq2-15} and \eqref{eq4-3} imply that there is $k_1\in \N$ such that for each $k\ge k_1$, there are $s''_k\in (0,1/k)$ and a $\mu$-full measure subset $X''_k$ of $X$ such that for any $x\in X''_k$,
		\begin{equation}\label{eq4-4}
		\limsup_{N\to\infty}\Big|\frac{1}{N}\sum_{n=1}^{N}f(T^{\lfloor \alpha n^c \rfloor}x)g(T^{\lfloor \beta n^c \rfloor}x)-
		\frac{1}{N}\int_{0}^{N}\tilde{f}(S^{\gamma t}(x,s''_k))\tilde{g}(S^{t}(x,s''_k))dt\Big|\lesssim \frac{1}{k}.
		\end{equation}
		
		Fix $\displaystyle x\in \left(\bigcap_{k=k_1}^{\infty}X''_k\right)\cap \{y\in X:\max(|f(T^n y)|,|g(T^n y)|)\le 1\ \text{for any}\ n\in \Z\}$. Then we have the following:
		\begin{equation}\label{eq4-6}
		\begin{split}
		& \frac{1}{N}\int_{0}^{N}\tilde{f}(S^{\gamma t}(x,s''_k))\tilde{g}(S^{t}(x,s''_k))dt
		\\ = &
		\frac{1}{N}\sum_{n=0}^{N-1}g(T^n x)\int_{n}^{n+1}\tilde{f}(S^{\gamma t}(x,s''_k))dt+O(\frac{1}{kN})+O(\frac{1}{k})
		\\ = &
		\int_{0}^{1}\frac{1}{N}\sum_{n=0}^{N-1}g(T^n x)f(T^{\lfloor\gamma (n+t)+s''_k\rfloor}x)dt+O(\frac{1}{kN})+O(\frac{1}{k})
		\\ = &
		\int_{0}^{1}\frac{1}{N}\sum_{n=0}^{N-1}g(T^n x)f(T^{\lfloor\gamma (n+t)\rfloor}x)dt+O(\frac{1}{kN})+O(\frac{1}{k})+o_{N\to\infty}(1)\ (\text{by Lemma \ref{weyl}}).
		\end{split}
		\end{equation}
		
		By Theorem \ref{AB}, we have that for any $t\in [0,1]$, the limit
		\begin{equation}\label{eq4-5}
		\lim_{N\to\infty}\frac{1}{N}\sum_{n=0}^{N-1}g(T^n x)f(T^{\lfloor\gamma (n+t)\rfloor}x)
		\end{equation}
		exists for $\mu$-a.e. $x\in X$. Combining \eqref{eq4-4} - \eqref{eq4-5}, we have that for $\mu$-a.e. $x\in X$,
		$$\lim_{N\to\infty}\frac{1}{N}\sum_{n=1}^{N}f(T^{\lfloor \alpha n^c \rfloor}x)g(T^{\lfloor \beta n^c \rfloor}x)=	\int_{0}^{1}\lim_{N\to\infty}\frac{1}{N}\sum_{n=0}^{N-1}g(T^n x)f(T^{\lfloor\gamma (n+t)\rfloor}x)dt.$$
		
		The proof is complete.
\end{proof}
By the above proposition and Theorem \ref{AB}, we have that if $\E_{\mu}(f|\mathcal{Z}_{2}(T))=0$ or $\E_{\mu}(g|\mathcal{Z}_{2}(T))=0$, then for $\mu$-a.e. $x\in X$, 
$$\lim_{N\to\infty}\frac{1}{N}\sum_{n=1}^{N}f(T^{\lfloor \alpha n^c \rfloor}x)g(T^{\lfloor \beta n^c \rfloor}x)=0.$$
\section{Proof of Theorem \ref{TC}}\label{sec5}
In this section, we show Theorem \ref{TC}.
Before this, let us introduce a lemma.
\begin{lemma}\label{lem5-1}
		Let $1<p<\infty$, $c$ be a non-integer positive real number, $\alpha_1,\ldots,\alpha_m$ be a family of non-zero real numbers, and $T_1,\ldots,T_m$ be a family of commuting invertible measure preserving transformations acting on the Lebesgue space $(X,\X,\mu)$. Then for any $f\in L^{p}(\mu)$, 
		\begin{equation}\label{eq5-0}
		\norm{\sup_{N\ge 1}\left|\frac{1}{N}\sum_{n=1}^{N}T_{1}^{\lfloor \alpha_1 n^{c} \rfloor}\cdots T_{m}^{\lfloor \alpha_m n^{c} \rfloor}f\right|}_{L^{p}(\mu)}\lesssim_{c,\alpha_1,\ldots,\alpha_m,m,p} \norm{f}_{L^{p}(\mu)}.
		\end{equation}
\end{lemma}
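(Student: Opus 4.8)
The plan is to absorb the coefficient vector $(\alpha_{1},\dots,\alpha_{m})$ and the several commuting transformations into a flow extension, transfer to a one-dimensional model, and there invoke the (known) maximal ergodic theorem along $\lfloor n^{c}\rfloor$ — the rounding errors introduced by the floor functions being controlled through equidistribution of $\{\alpha_{i}n^{c}\}$. (For $0<c<1$ the maximal function in \eqref{eq5-0} is in fact dominated by the Birkhoff maximal function, since $\lfloor n^{c}\rfloor$ hits each integer $k$ about $k^{1/c-1}$ times; so the content lies in $c>1$.)

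\emph{Flow extension.} Imitating Section~\ref{sec3} with an $m$-dimensional fibre, set $Y=X\times[0,1)^{m}$, $\nu=\mu\times\text{Leb}_{[0,1)^{m}}$, and for $\vec t\in\R^{m}$ let
\[
S^{\vec t}(x,\vec s)=\Big(T_{1}^{\lfloor t_{1}+s_{1}\rfloor}\cdots T_{m}^{\lfloor t_{m}+s_{m}\rfloor}x,\ (t_{1}+s_{1},\dots,t_{m}+s_{m})\bmod 1\Big);
\]
commutativity of $T_{1},\dots,T_{m}$ makes $(Y,\Y,\nu,(S^{\vec t})_{\vec t\in\R^{m}})$ a measurable $\R^{m}$-flow. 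With $\vec\alpha=(\alpha_{1},\dots,\alpha_{m})$, $\vec v_{n}=(\lfloor\alpha_{1}n^{c}\rfloor,\dots,\lfloor\alpha_{m}n^{c}\rfloor)$, $T^{\vec v_{n}}=T_{1}^{\lfloor\alpha_{1}n^{c}\rfloor}\cdots T_{m}^{\lfloor\alpha_{m}n^{c}\rfloor}$ and $\tilde f=f\otimes 1_{[0,1)^{m}}$, the one-parameter subflow $R^{u}:=S^{u\vec\alpha}$ satisfies $\tilde f(R^{n^{c}}(x,\vec 0))=f(T^{\vec v_{n}}x)$, so the averages in \eqref{eq5-0} are exactly the $\vec s=\vec 0$ slice of $A_{N}g:=\frac1N\sum_{n=1}^{N}g\circ R^{n^{c}}$.

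\emph{One-dimensional input.} For the one-parameter flow $(R^{u})_{u\in\R}$, Calderón's transference principle reduces the maximal bound $\|\sup_{N}|A_{N}g|\|_{L^{p}(\nu)}\lesssim_{c,p}\|g\|_{L^{p}(\nu)}$ ($p>1$) to the $L^{p}(\R)$-boundedness of $g\mapsto\sup_{N}\frac1N|\sum_{n\le N}g(\cdot+n^{c})|$, the continuous form of the maximal ergodic theorem along $\lfloor n^{c}\rfloor$ (known for $p>1$; see \cite{Daskalakis2025} and its references), which follows from the discrete one upon writing $\R\cong\Z\times[0,1)$ and absorbing the shifts $\{n^{c}\}$ into, at most, the two adjacent unit cells.

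\emph{Descending to $\vec s=\vec 0$ — the main obstacle.} Since $X\times\{\vec 0\}$ is $\nu$-null, one must compare it with generic slices. Taking $f\ge0$ (harmless) and using that for $\vec s\in[0,1)^{m}$ one has $\tilde f(R^{n^{c}}(x,\vec s))=f(T^{\vec v_{n}+\vec\delta_{n}(\vec s)}x)$ with $\vec\delta_{n}(\vec s)\in\{0,1\}^{m}$ of $i$-th entry $1_{\{\alpha_{i}n^{c}\}\ge 1-s_{i}}$, one gets for every $\vec s\in[0,\eta)^{m}$
\[
\sup_{N}\frac1N\sum_{n=1}^{N}f(T^{\vec v_{n}}x)\ \le\ \sup_{N}\frac1N\sum_{n=1}^{N}\tilde f\big(R^{n^{c}}(x,\vec s)\big)\ +\ \sup_{N}\frac1N\!\!\sum_{\substack{1\le n\le N\\ \exists i:\ \{\alpha_{i}n^{c}\}>1-\eta}}\!\! f(T^{\vec v_{n}}x).
\]
Averaging the first term over $\vec s\in[0,\eta)^{m}$ and applying the previous step bounds it in $L^{p}(\mu)$ by $\eta^{-m/p}\|f\|_{L^{p}(\mu)}$. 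The second term, after $1_{\exists i:\{\alpha_{i}n^{c}\}>1-\eta}\le\sum_{i}1_{(1-\eta,1)}(\{\alpha_{i}n^{c}\})$, is a maximal average along $\vec v_{n}$ with weight a function of the fractional parts $\{\alpha_{i}n^{c}\}$; bounding it on $L^{p}(\mu)$ — uniformly for one fixed $\eta\in(0,1)$, which then closes the argument — is where I expect the real work. Smoothing the weight on the circle and expanding it in a Fourier series turns this into a modulated maximal inequality for the averages $\frac1N\sum_{n\le N}e(\theta n^{c})f(T^{\vec v_{n}}x)$ (with polynomial dependence on $\theta$), which is of exactly the type underlying the estimates for $E_{k,N}$ and $W_{k,N}$ in Sections~\ref{section3}--\ref{section4}; it reduces to one dimension by the same flow-and-transference step and is handled by the van der Corput / exponential-sum arguments of \cite{Daskalakis2025}, the equidistribution of $\{\alpha_{i}n^{c}\}$ (valid for every non-integer $c>0$) ensuring that the exceptional indices form a small proportion.
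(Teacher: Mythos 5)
Your overall architecture --- the $m$-dimensional flow extension on $X\times[0,1)^m$, the reduction to the one-parameter orbit $u\mapsto S^{u(\alpha_1,\ldots,\alpha_m)}$, and the control of floor-function errors by comparison with nearby fibre points --- matches the paper's, and your treatment of the first term in the slice comparison is fine (the $\eta^{-m/p}$ loss from averaging over $\vec s\in[0,\eta)^m$ is harmless since $\eta$ is fixed). The genuine gap is the second term,
\begin{equation*}
\sup_{N\ge1}\frac1N\sum_{\substack{1\le n\le N\\ \exists i:\ \{\alpha_i n^c\}>1-\eta}}f(T^{\vec v_n}x),
\end{equation*}
which you leave unproved. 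The strategy you sketch for it --- smoothing the cutoff, Fourier expansion, and a modulated maximal inequality for $\frac1N\sum_{n\le N}e(\theta n^c)f(T^{\vec v_n}x)$ --- is an $L^p$ maximal bound along the very same orbit $\vec v_n$ that the lemma is about, now with weights, so the reduction is circular as stated (and harder than the target). Equidistribution of $\{\alpha_i n^c\}$ only says the exceptional indices have density $O(\eta)$ for large $N$; that does not control the supremum over all $N$ of a weighted average in $L^p(\mu)$, particularly for $p$ close to $1$.

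The paper avoids having an exceptional-set term at all. For $f\ge0$, writing $\vec\delta_n(\mathbf z)$ for the vector with $i$-th entry $1_{\{\alpha_i n^c\}\ge 1-z_i}$, one has $\tilde f\bigl(S^{(\alpha_1,\ldots,\alpha_m)n^c}(T_1^{-i_1}\cdots T_m^{-i_m}x,\mathbf z)\bigr)=f(T^{\vec v_n+\vec\delta_n(\mathbf z)-\mathbf i}x)$, and the summand with $\mathbf i=\vec\delta_n(\mathbf z)$ equals $f(T^{\vec v_n}x)$ exactly; hence for every $n$ and every $\mathbf z$,
\begin{equation*}
f(T^{\vec v_n}x)\ \le\ \sum_{\mathbf i\in\{0,1\}^m}\tilde f\bigl(S^{(\alpha_1,\ldots,\alpha_m)n^c}(T_1^{-i_1}\cdots T_m^{-i_m}x,\mathbf z)\bigr).
\end{equation*}
This dominates the maximal function by $2^m$ shifted copies of the flow maximal function, each bounded in $L^p(\nu)$ by measure invariance; a second application of the same device (a three-way split of $[N]$ per coordinate) replaces $\alpha_j n^c$ by $(\alpha_j/L_k)\lfloor L_k n^c\rfloor$ and reduces everything to the known single-transformation maximal inequality along $\lfloor L_k n^c\rfloor$ applied to $S^{(\alpha_1,\ldots,\alpha_m)/L_k}$ (\cite[Corollary 1.4]{Keeffe2024}). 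The same summed comparison is also what makes your continuous-to-discrete transference step rigorous, which as written (``absorbing the shifts $\{n^c\}$ into adjacent unit cells'') is too vague, since the fractional part varies with $n$ and a general $L^p$ function on $\R$ cannot be pointwise recovered from a single discrete sample. Replacing your one-sided comparison by the summed one closes your proof with no exponential-sum input.
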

\begin{proof}
	Let $Y=X\times [0,1)^{m}$. For any $(t_1,\ldots,t_m)\in \R^m$, we define $S^{(t_1,\ldots,t_m)}:Y\to Y$ by putting
	\begin{equation*}
	S^{(t_1,\ldots,t_m)}(x,z_1,\ldots,z_m)=(T_{1}^{\lfloor t_1+z_1\rfloor}\cdots T_{m}^{\lfloor t_m+z_m\rfloor}x,(t_1+z_1)\mod{1},\ldots,(t_m+z_m)\mod{1}).
	\end{equation*}
	Then $(Y,\X\otimes \mathcal{B}([0,1)^{m}),\nu:=\mu\times \text{Leb}_{[0,1)^{m}},(S^{\bf t})_{{\bf t}\in \R^m})$ is a measurable flow.
	
	For any $s\in [0,1)$, let $L_{0}(s)=[0,1-s)$ and $L_{1}(s)=[1-s,1)$.
	Fix $g\in L^{p}(\mu)$ such that it is non-negative almost everywhere. And define $\tilde{g}:Y\to \C$ by putting $\tilde{g}(x,{\bf z})=g(x)$. Then we have that for $\nu$-a.e. $(x,{\bf z})\in Y$,
	\begin{equation}\label{eq5-1}
		\begin{split}
		& \frac{1}{N}\sum_{n=1}^{N}\tilde{g}(S^{(\alpha_1 n^{c},\ldots,\alpha_m n^{c})}(x,{\bf z}))
		\\ & \hspace{1cm} =
		\sum_{{\bf i}\in \{0,1\}^{m}}\frac{1}{N}\sum_{1\le n\le N,\{\alpha_1 n^{c}\}\in L_{i_1}(z_1),\ldots,\atop\{\alpha_m n^{c}\}\in L_{i_m}(z_m)}(T_{1}^{i_1}\cdots T_{m}^{i_m}g)(T_{1}^{\lfloor \alpha_1 n^{c}\rfloor}\cdots T_{m}^{\lfloor \alpha_m n^{c}\rfloor}x).
		\end{split}
	\end{equation}
	\eqref{eq5-1} implies that for $\nu$-a.e. $(x,{\bf z})\in Y$,
	\begin{equation}\label{eq5-2}
	\begin{split}
	&\frac{1}{N}\sum_{n=1}^{N}g(T_{1}^{\lfloor \alpha_1 n^{c}\rfloor}\cdots T_{m}^{\lfloor \alpha_m n^{c}\rfloor}x)
	\\ & \hspace{2cm} \le 
    \sum_{{\bf i}\in \{0,1\}^{m}}\frac{1}{N}\sum_{n=1}^{N}\tilde{g}(S^{(\alpha_1 ,\ldots,\alpha_m)n^{c}}(T_{1}^{-i_1}\cdots T_{m}^{-i_m}x,{\bf z})).
	\end{split}
	\end{equation}
	
	Fix $k\in \N$ with $k>10^9$ and assign $L_k>0$ to it such that $$(|\alpha_1|+\cdots+|\alpha_m|)/L_k<1/k.$$ It is easy to see that for any $1\le j\le m$ and any $z\in [0,1)$, we can split $[N]$ into three parts $I_{\alpha_j,z,-1}(N)$, $I_{\alpha_j,z,0}(N)$, and $I_{\alpha_j,z,1}(N)$ such that for each $i\in \{-1,0,1\}$ and $n\in I_{\alpha_j,z,i}(N)$, we have that 
	$$\lfloor\alpha_j n^c+z\rfloor-\lfloor\alpha_j/L_k \lfloor L_kn^c\rfloor+z\rfloor=i.$$
	Then for $\nu$-a.e. $(x,{\bf z})\in Y$, we have that 
	\begin{equation}\label{eq5-3}
	\begin{split}
	&  \sum_{{\bf i}\in \{0,1\}^{m}}\frac{1}{N}\sum_{n=1}^{N}\tilde{g}(S^{(\alpha_1 ,\ldots,\alpha_m)n^{c}}(T_{1}^{-i_1}\cdots T_{m}^{-i_m}x,{\bf z}))
	\\ = &
	\sum_{{\bf i}\in \{0,1\}^{m}}\sum_{{\bf j}\in \{-1,0,1\}^{m}}\frac{1}{N}\sum_{n\in \bigcap_{r=1}^{m}I_{\alpha_r,z_r,j_r}(N)}\tilde{g}((S^{(\alpha_1 ,\ldots,\alpha_m)/L_k})^{\lfloor L_k n^c\rfloor}(T_{1}^{j_1-i_1}\cdots T_{m}^{j_m-i_m}x,{\bf z}))
	\\ \le &
	\sum_{{\bf i}\in \{0,1\}^{m}}\sum_{{\bf j}\in \{-1,0,1\}^{m}}\frac{1}{N}\sum_{n=1}^{N}\tilde{g}((S^{(\alpha_1 ,\ldots,\alpha_m)/L_k})^{\lfloor L_k n^c\rfloor}(T_{1}^{j_1-i_1}\cdots T_{m}^{j_m-i_m}x,{\bf z})).
	\end{split}
	\end{equation}
	Combining \eqref{eq5-2} and \eqref{eq5-3}, we have that 
	\begin{align*}
		& \hspace{0.5cm}\norm{\sup_{N\ge 1}\left|\frac{1}{N}\sum_{n=1}^{N}T_{1}^{\lfloor \alpha_1 n^{c} \rfloor}\cdots T_{m}^{\lfloor \alpha_m n^{c} \rfloor}g\right|}_{L^{p}(\mu)}
		\\ &\le 
		\sum_{{\bf i}\in \{0,1\}^{m}}\sum_{{\bf j}\in \{-1,0,1\}^{m}}\norm{\sup_{N\ge 1}\left|
			\frac{1}{N}\sum_{n=1}^{N}\tilde{g}((S^{(\alpha_1 ,\ldots,\alpha_m)/L_k})^{\lfloor L_k n^c\rfloor}(T_{1}^{j_1-i_1}\cdots T_{m}^{j_m-i_m}x,{\bf z}))
			\right|}_{L^{p}(\nu)}
		\\ &\lesssim_{\alpha_1 ,\ldots,\alpha_m,c,m,p} \norm{\tilde{g}}_{L^{p}(\nu)}
		= 
		\norm{g}_{L^{p}(\mu)}.\ (\text{by \cite[Corollary 1.4]{Keeffe2024}})
	\end{align*}
	
	The above calculations and the linear property of ergodic averages imply the maximal inequality in \eqref{eq5-0}. This finishes the proof.
\end{proof}

Now, we are about to show Theorem \ref{TC}.
\begin{proof}[Proof of Theorem \ref{TC}]
	By Lemma \ref{lem5-1} and a standard approximation argument, it remains to prove that Theorem \ref{TC} holds for all $f,g\in L^{\infty}(\mu)$.
	
	Next, similar to the proof of Lemma \ref{lem5-1}, we assign a measurable flow to $(X,\X,\mu,T)$. Let $Y=X\times [0,1)^{m}$. For any $(t_1,\ldots,t_m)\in \R^m$, we define $S^{(t_1,\ldots,t_m)}:Y\to Y$ by putting
	\begin{equation*}
	S^{(t_1,\ldots,t_m)}(x,z_1,\ldots,z_m)=(T_{1}^{\lfloor t_1+z_1\rfloor}\cdots T_{m}^{\lfloor t_m+z_m\rfloor}x,(t_1+z_1)\mod{1},\ldots,(t_m+z_m)\mod{1}).
	\end{equation*}
	Then $(Y,\X\otimes \mathcal{B}([0,1)^{m}),\nu:=\mu\times \text{Leb}_{[0,1)^{m}},(S^{\bf t})_{{\bf t}\in \R^m})$ is a measurable flow.
	
	Fix $1$-bounded $f,g\in L^{\infty}(\mu)$. Define $\tilde{f}:Y\to \C$ by putting $\tilde{f}(x,{\bf z})=f(x)$. Similarly, we define $\tilde{g}$. 
	Assume that the limit 
			\begin{equation}\label{eq5-4}
			\lim_{N\to\infty}\frac{1}{N}\sum_{n=1}^{N}\tilde{f}(S^{(b_1,\ldots,b_m)n^c}(x,{\bf z}))\tilde{g}(S^{(d_1,\ldots,d_m)n^c}(x,{\bf z}))
		\end{equation}
		exists almost everywhere.
		
	Now, based on this assumption, we show that the limit
	\begin{equation}\label{eq5-5}
		\lim_{N\to\infty}\frac{1}{N}\sum_{n=1}^{N}f(T_{1}^{\lfloor b_1 n^{c} \rfloor}\cdots T_{m}^{\lfloor b_m n^{c} \rfloor}x)g(T_{1}^{\lfloor d_1 n^{c} \rfloor}\cdots T_{m}^{\lfloor d_m n^{c} \rfloor}x)
	\end{equation}
	exists for $\mu$-a.e. $x\in X$.
	By the assumption on the limit in \eqref{eq5-4}, for each $k\in \N$, there is $(z_{k,1},\ldots,z_{k,m})\in (0,1/k)^{m}$ such that there is a $\mu$-full measure subset $X_k$ of $X$ such that for any $x\in X_k$, the limit
	\begin{equation}\label{eq5-6}
			\lim_{N\to\infty}\frac{1}{N}\sum_{n=1}^{N}\tilde{f}(S^{(b_1,\ldots,b_m)n^c}(x,(z_{k,1},\ldots,z_{k,m})))\tilde{g}(S^{(d_1,\ldots,d_m)n^c}(x,(z_{k,1},\ldots,z_{k,m})))
	\end{equation}
	 exists. Fix $\displaystyle x_0\in \{x\in X:\max(|f(x)|,|g(x)|)\le 1\}\cap \left(\bigcap_{k=1}^{\infty}X_k\right)$. Then for each $k\in \N$, we have the following:
	 \begin{align*}
	 	& \limsup_{N\to\infty}\Big|\frac{1}{N}\sum_{n=1}^{N}\tilde{f}(S^{(b_1,\ldots,b_m)n^c}(x_0,(z_{k,1},\ldots,z_{k,m})))\tilde{g}(S^{(d_1,\ldots,d_m)n^c}(x_0,(z_{k,1},\ldots,z_{k,m})))  
	 	\\ & \hspace{2cm} -
	 	\frac{1}{N}\sum_{n=1}^{N}f(T_{1}^{\lfloor b_1 n^{c} \rfloor}\cdots T_{m}^{\lfloor b_m n^{c} \rfloor}x_0)g(T_{1}^{\lfloor d_1 n^{c} \rfloor}\cdots T_{m}^{\lfloor d_m n^{c} \rfloor}x_0)
	 	\Big|
	 	\\ \lesssim &
	 	\sum_{i\in \{1\le j\le m:b_j\neq 0\}}\limsup_{N\to\infty}\frac{|\{1\le n\le N:\{b_i n^c\}\in [1-z_{k,i},1)\}|}{N}
	 	\\ & \hspace{2cm} +
	 	\sum_{i\in \{1\le j\le m:d_j\neq 0\}}\limsup_{N\to\infty}\frac{|\{1\le n\le N:\{d_i n^c\}\in [1-z_{k,i},1)\}|}{N}
	 	\\ \lesssim_{m} &
	 	\frac{1}{k}.\ (\text{by Lemma \ref{lem2-3}})
	 \end{align*}
	 Combining the above calculations and \eqref{eq5-6}, we have that the limit in \eqref{eq5-5} exists for $\mu$-a.e. $x\in X$.
	 
	 Therefore, to complete the proof, it remains to verify that the limit in \eqref{eq5-4} exists for $\nu$-a.e. $(x,{\bf z})\in Y$.
	 
	 By the assumption on ${\bf b}$ and {\bf d}, there is $\lambda\neq 0$ such that ${\bf d}=\lambda {\bf b}$. Fix $10^9<l\in \N$ such that $\max\{|b_i/2^l|:1\le i\le m\}+\max\{|d_i/2^l|:1\le i\le m\}<1/l$. 
	 
	 Then we have the following:
	 \begin{align*}
	 	& \int_{X}\int_{[0,1)^{m}}\limsup_{N\to\infty}\Big|
	 	\frac{1}{N}\sum_{n=1}^{N}\tilde{f}(S^{n^c{\bf b}}(x,{\bf z}))\tilde{g}(S^{\lambda n^c{\bf b}}(x,{\bf z}))
	 	\\ & \hspace{1.5cm} -
	 	\frac{1}{N}\sum_{n=1}^{N}\tilde{f}((S^{{\bf b}/2^l})^{\lfloor 2^ln^c\rfloor}(x,{\bf z}))\tilde{g}((S^{{\bf b}/2^l})^{\lfloor 2^l\lambda n^c\rfloor}(x,{\bf z}))
	 	\Big|d\mu(x)d{\bf z}
	 	\\ \lesssim &
	 	  \frac{1}{l}+\int_{X}\int_{(2/l,1-2/l)^{m}}\limsup_{N\to\infty}\Big|
	 	 \frac{1}{N}\sum_{n=1}^{N}\tilde{f}(S^{n^c{\bf b}}(x,{\bf z}))\tilde{g}(S^{\lambda n^c{\bf b}}(x,{\bf z}))
	 	 \\ & \hspace{1.5cm} -
	 	 \frac{1}{N}\sum_{n=1}^{N}\tilde{f}((S^{{\bf b}/2^l})^{\lfloor 2^ln^c\rfloor}(x,{\bf z}))\tilde{g}((S^{{\bf b}/2^l})^{\lfloor 2^l\lambda n^c\rfloor}(x,{\bf z}))
	 	 \Big|d\mu(x)d{\bf z}
	 	 \\ \lesssim &
	 	 \frac{1}{l}+\int_{(2/l,1-2/l)^{m}}\limsup_{N\to\infty}\left(\sum_{i\in \{1\le j\le m:b_j\neq 0\}}\frac{|\{1\le n\le N:\{b_in^c\}\in I_l(z_i)\}|}{N}\right)
	 	 \\ & \hspace{1.5cm} +
	 	 \limsup_{N\to\infty}\left(\sum_{i\in \{1\le j\le m:d_j\neq 0\}}\frac{|\{1\le n\le N:\{d_in^c\}\in I_l(z_i)\}|}{N}\right)
	 	 d{\bf z}
	 	 \\ \lesssim_m &
	 	 \frac{1}{l}.\ (\text{by Lemma \ref{lem2-3}}).
	 \end{align*}
	 
	 By the above calculations, to verify that the limit in \eqref{eq5-4} exists for $\nu$-a.e. $(x,{\bf z})\in Y$, it remains to prove that for each $l\in \N$ and $\nu$-a.e. $(x,{\bf z})\in Y$, 
	 \begin{equation}\label{eq5-8}
	 	\lim_{N\to\infty}\frac{1}{N}\sum_{n=1}^{N}\tilde{f}((S^{{\bf b}/2^l})^{\lfloor 2^ln^c\rfloor}(x,{\bf z}))\tilde{g}((S^{{\bf b}/2^l})^{\lfloor 2^l\lambda n^c\rfloor}(x,{\bf z}))
	 \end{equation}
	 exists.
	 
	 By Theorem \ref{TB}, we have that the limit in \eqref{eq5-8} exists almost everywhere. This finishes the proof. 
\end{proof}

\appendix 
\section{A double recurrence result}\label{appendixC}
In this section, our task is to prove the following pointwise ergodic theorem.
\begin{thm}\label{AB}
	Fix an irrational number $\alpha$ and $\beta\in \R$. Let $(X,\X,\mu,T)$ be a measure preserving system. Then for any $f,g\in L^{\infty}(\mu)$, we have that 
	$$\lim_{N\to\infty}\frac{1}{N}\sum_{n=1}^{N}f(T^{n}x)g(T^{\lfloor \alpha n+\beta \rfloor}x)$$ exists for $\mu$-a.e. $x\in X$. If $\E_{\mu}(f|\mathcal{Z}_{2}(T))=0$ or $\E_{\mu}(g|\mathcal{Z}_{2}(T))=0$, then the limit function is zero.
\end{thm}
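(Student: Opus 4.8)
The statement is a mild variant of Krause's Theorem \ref{thm1-2}: the factor $f(T^{n}x)=f(T^{\lfloor 1\cdot n\rfloor}x)$ carries the rational (indeed integer) coefficient $1$, whereas $g(T^{\lfloor\alpha n+\beta\rfloor}x)$ carries the irrational coefficient $\alpha$ together with the additive constant $\beta$. The plan is therefore to run Krause's argument from \cite{Krause2025}, checking that the constant $\beta$ is harmless, and then to read off the characteristic factor from the seminorm control produced by that argument. (One could first pass to the suspension flow of Section \ref{sec3}, writing $f(T^{n}x)=\tilde f(S^{n}(x,s))$ and $g(T^{\lfloor\alpha n+\beta\rfloor}x)=\tilde g(S^{\alpha n+\beta}(x,s))$ at $s=0$ and using a Fubini-plus-equidistribution reduction as in the proof of Lemma \ref{lem2-2} together with Lemma \ref{weyl}; this is not needed, however, since the averages here are already of the Beatty-sequence type Krause treats directly.)

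The core of Krause's scheme is: (i) the reduction of a.e.\ convergence to convergence on a dense subclass of $L^{\infty}(\mu)\times L^{\infty}(\mu)$ via a maximal/oscillation (jump) inequality, obtained by transferring the problem to $\Z$ and invoking the requisite harmonic analysis (Weyl sums / the circle method); and (ii) a van der Corput iteration showing that, for $\mu$-a.e.\ $x$, the averages are controlled by a seminorm of Host--Kra order $2$. In our situation the constant $\beta$ enters only through the unimodular factor $e(\theta\beta)$ in the relevant exponential sums — since $e\big(\theta(\alpha n+\beta)\big)=e(\theta\beta)\,e(\theta\alpha n)$ — and, in each differencing step, through phase differences of the form $\alpha(n+h)+\beta-(\alpha n+\beta)=\alpha h$, which no longer see $\beta$ at all; the ``carry'' terms $\{\alpha(n+h)+\beta\}-\{\alpha n+\beta\}-\{\alpha h\}\in\{-1,0\}$ that force the passage from $U^{2}$- to $U^{3}$-control are likewise independent of $\beta$. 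Thus Krause's estimates go through verbatim, and the contribution of the clean factor $f(T^{n}x)$ is handled by the classical Wiener--Wintner theorem, which for $\mu$-a.e.\ $x$ gives convergence of $\tfrac1N\sum_{n\le N}f(T^{n}x)e(n\theta)$ for every $\theta$ and convergence to $0$, uniformly in $\theta$, when $f$ is orthogonal to the Kronecker factor.

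Once the problem is localized to the order-$2$ factor of $(X,\X,\mu,T)$, namely $\mathcal Z_{2}(T)$ (the order-$2$ seminorm is needed because of the floor on the $g$-factor; the clean factor $f$ is already governed by the Kronecker seminorm, and $\mathcal Z_{1}\subseteq\mathcal Z_{2}$), convergence on $\mathcal Z_{2}(T)$ follows from nilmanifold equidistribution: $\mathcal Z_{2}(T)$ is an inverse limit of $2$-step nilsystems, the orbit $n\mapsto\big(T^{n}x,T^{\lfloor\alpha n+\beta\rfloor}x\big)$ runs along a one-parameter subgroup with the floor resolved by equidistribution of $\{\alpha n+\beta\}$ (Lemma \ref{weyl}), and Leibman's theorem (or \cite[Theorem 21.6]{HK-book}) applies. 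Finally, \cite[Theorem 21.6]{HK-book} yields that the limit is $0$ whenever $\E_{\mu}(f|\mathcal Z_{2}(T))=0$ or $\E_{\mu}(g|\mathcal Z_{2}(T))=0$. The genuine difficulty lies not in the (routine) bookkeeping around $\beta$ but in the part of Krause's proof being invoked: establishing the $U^{3}$-type control at the pointwise, rather than $L^{2}$, level, which rests on the circle-method analysis converting Beatty-type averages along $\lfloor\alpha n+\beta\rfloor$ into genuine $\Z$-averages with quantitative error terms — here this is literally Krause's analysis with an inert phase $e(\theta\beta)$ carried along.
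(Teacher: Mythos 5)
Your proposal is correct in outline but takes a genuinely different route from the paper. You propose to re-open Krause's proof and carry the constant $\beta$ through the entire circle-method/van der Corput machinery, arguing that it only contributes an inert phase $e(\theta\beta)$ and disappears in all differencing steps. The paper instead never touches the internals of \cite{Krause2025}: it uses the irrationality of $\alpha$ a second time, via Lemma \ref{weyl}, to find $m_k\in\N$, $l_k\in\Z$ with $0<\beta-m_k\alpha-l_k<1/k$, and then shifts the summation index by $m_k$ so that
\begin{equation*}
\frac{1}{N}\sum_{n=1}^{N}f(T^{n}x)g(T^{\lfloor \alpha n+\beta \rfloor}x)
\end{equation*}
agrees, up to an $O(1/k)$ error controlled by the equidistribution of $\{\alpha n\}$, with $\frac{1}{N}\sum_{n=1}^{N}(T^{-m_k}f)(T^{n}x)(T^{l_k}g)(T^{\lfloor \alpha n\rfloor}x)$; the latter converges a.e.\ by Theorem \ref{thm1-2} applied as a black box, and letting $k\to\infty$ gives the Cauchy property of the original averages. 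This reduction buys a short, self-contained argument and removes the burden of certifying that every estimate in Krause's paper is insensitive to $\beta$ — which is the soft spot of your approach: the claim that the argument goes through ``verbatim'' is plausible (and your identification of where $\beta$ could enter is accurate) but is asserted rather than checked, and it is precisely the kind of verification the paper's trick renders unnecessary. For the vanishing statement, both arguments end in the same place (seminorm control of order $2$ for the pair of linear sequences $(n,\lfloor\alpha n\rfloor)$), though the paper cites \cite[Theorem 4.3]{BMR2024} together with \cite[Theorem 9.7]{HK-book} for the $L^{2}$ statement rather than a Wiener--Wintner/Leibman equidistribution argument; note also that after the paper's reduction only the $\beta=0$ case of the vanishing claim is needed, since $\E_{\mu}(T^{-m_k}f|\mathcal{Z}_2(T))$ and $\E_{\mu}(T^{l_k}g|\mathcal{Z}_2(T))$ vanish together with $\E_{\mu}(f|\mathcal{Z}_2(T))$ and $\E_{\mu}(g|\mathcal{Z}_2(T))$.
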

\begin{proof}
	Fix $1$-bounded $f,g\in L^{\infty}(\mu)$. 
	Since $\alpha$ is irrational, by Lemma \ref{weyl}, we have that for any $k\in \N$ with $k>10^9$, there are $m_k\in \N$ and $l_k\in \Z$ such that $$0<\beta-m_k\alpha -l_k<1/k.$$
	
	Fix $k\in \N$ with $k>10^9$.
	Then for $\mu$-a.e. $x\in X$,
	\begin{align*}
		& \lim_{N\to\infty}\left|\frac{1}{N}\sum_{n=1}^{N}f(T^{n}x)g(T^{\lfloor \alpha n+\beta \rfloor}x)-\frac{1}{N}\sum_{n=1}^{N}(T^{-m_k}f)(T^{n}x)(T^{l_k}g)(T^{\lfloor \alpha (n-m_k)+\beta -l_k\rfloor}x)\right|=0.
	\end{align*}
	Meanwhile, we observe that 
	\begin{align*}
		& \int_{X}\limsup_{N\to\infty}\Big|\frac{1}{N}\sum_{n=1}^{N}(T^{-m_k}f)(T^{n}x)(T^{l_k}g)(T^{\lfloor \alpha (n-m_k)+\beta -l_k\rfloor}x)
		\\ & \hspace{4cm} -
		\frac{1}{N}\sum_{n=1}^{N}(T^{-m_k}f)(T^{n}x)(T^{l_k}g)(T^{\lfloor \alpha n\rfloor}x)\Big|d\mu(x)
		\\ \lesssim &
		\limsup_{N\to\infty}\frac{|\{1\le n\le N:\{\alpha n\} \in (1-1/k,1)\}|}{N}
		\\ \le &
		\frac{1}{k}.\ (\text{by Lemma \ref{weyl}})
	\end{align*}
	
	Combining these and Theorem \ref{thm1-2}, we have that $$\lim_{N\to\infty}\frac{1}{N}\sum_{n=1}^{N}f(T^{n}x)g(T^{\lfloor \alpha n+\beta \rfloor}x)$$ exists for $\mu$-a.e. $x\in X$. And we have the following estimate:
	\begin{align*}
		&\int_{X}\limsup_{N\to\infty}\Big|\frac{1}{N}\sum_{n=1}^{N}f(T^{n}x)g(T^{\lfloor \alpha n+\beta \rfloor}x)-
		\frac{1}{N}\sum_{n=1}^{N}(T^{-m_k}f)(T^{n}x)(T^{l_k}g)(T^{\lfloor \alpha n\rfloor}x)\Big|d\mu(x)\lesssim \frac{1}{k}.
	\end{align*}
	This implies that to complete the proof, it remains to show the following statement:
	
   If $\E_{\mu}(f|\mathcal{Z}_{2}(T))=0$ or $\E_{\mu}(g|\mathcal{Z}_{2}(T))=0$, then
	$$\lim_{N\to\infty}\frac{1}{N}\sum_{n=1}^{N}f(T^{n}x)g(T^{\lfloor \alpha n \rfloor}x)=0$$ in $L^{2}(\mu)$. 
	
	Comining \cite[Theorem 4.3]{BMR2024} and \cite[Theorem 9.7]{HK-book}, we have that the above statement holds. This finishes the proof.
\end{proof}

\bibliographystyle{plain}
\bibliography{ref}
\end{document}